\def\equationautorefname~#1\null{Equation~(#1)\null}
\declaretheorem[
style=plain,
name=Theorem,
numberwithin=section,
refname={Theorem,Theorems},
Refname={Theorem,Theorems}
]{Thm}
\declaretheorem[
style=plain,
name=Proposition,
numberlike=Thm,
refname={Proposition,Propositions},
Refname={Proposition,Propositions}
]{Prop}
\declaretheorem[
style=plain,
name=Lemma,
numberlike=Thm,
refname={Lemma,Lemmas},
Refname={Lemma,Lemmas}
]{Lem}
\declaretheorem[
style=plain,
name=Corollary,
numberlike=Thm,
refname={Corollary,Corollaries},
Refname={Corollary,Corollaries}
]{Cor}
\declaretheorem[
style=definition,
name=Definition,
numberlike=Thm,
refname={Definition,Definitions},
Refname={Definition,Definitions},
%qed={$\lhd$}
]{Def}
\declaretheorem[
style=definition,
name=Example,
numberlike=Thm,
refname={Example,Examples},
Refname={Example,Examples},
%qed={$\lhd$}
]{Eg}
\declaretheorem[
style=plain,
name=Conjecture,
numberlike=Thm,
refname={Conjecture,Conjectures},
Refname={Conjecture,Conjectures},
%qed={$\lhd$}
]{Conj}
\declaretheorem[
style=definition,
name=Remark,
numberlike=Thm,
refname={Remark,Remarks},
Refname={Remark,Remarks},
%qed={$\lhd$}
]{Rem}
\renewcommand{\vec}[1]{\uline{{\boldsymbol #1}}}
\newcommand{\Z}{\mathbb{Z}}
\newcommand{\Q}{\mathbb{Q}}
\newcommand{\C}{\mathbb{C}}
\newcommand{\R}{\mathbb{R}}
\newcommand{\wt}{{\mathrm{wt}}}
\DeclareMathOperator{\Li}{Li}
\DeclareMathOperator{\QS}{QSym}
\newcommand{\abs}[1]{\left\lvert #1 \right\rvert}
\renewcommand{\Im}{\operatorname{Im}}
\renewcommand{\r}{\overset{r}{*}}
\newcommand{\zt}{\zeta}
\renewcommand{\>}{\rangle}
\newcommand{\ii}{\mathrm{i}}
\DeclareMathOperator{\csch}{csch}
\DeclareMathOperator{\arctanh}{arctanh}
\renewcommand{\H}{\mathfrak{H}}
\newcommand{\dd}{\mathrm{d}}
\newcommand{\mot}{\mathfrak{m}}
\DeclareMathOperator{\reg}{reg}
\newcommand{\half}{{\sfrac{1\!}{2}}}
\newcommand{\ee}{\uline{e}}
\newcommand{\Lic}{\operatorname{\mathcal{L}i}}
\newcommand{\Tic}{\operatorname{\mathcal{L}i}^t}
\let\eps\varepsilon
\let\eps\eps
\newcommand{\asym}{\mathrm{asym}}
\newcommand{\sgnarg}[2]{
	\!\left( \genfrac{}{}{0pt}{0}{#1}{#2} \right)
}
\newcommand{\sgnargsm}[2]{
	\left( \textstyle\genfrac{}{}{0pt}{1}{#1}{#2} \right)
}
\newcommand{\poch}[2]{\left\{ #1 \right\}_{#2}}
\newcommand*\pFq[6][8]{%
	\begingroup % only local assignments
	\pFqmuskip=#1mu\relax
	% make the comma math active
	\mathcode`\,=\string"8000
	% and define it to be \pFqcomma
	\begingroup\lccode`\~=`\,
	\lowercase{\endgroup\let~}\pFqcomma
	% typeset the formula
	{}_{#2}F_{#3}{\left[\genfrac..{0pt}{}{#4}{#5};#6\right]}%
	\endgroup
}
\newcommand{\pFqcomma}{\mskip\pFqmuskip}
\let\@@pmod\pmod
\DeclareRobustCommand{\pmod}{\@ifstar\@pmods\@@pmod}
\def\@pmods#1{\mkern4mu({\operator@font mod}\mkern 6mu#1)}
\begin{document}
	
	\title{Symmetry results for multiple $t$-values}
	\date{April 29, 2022}
	
	\author{Steven Charlton}
	\address{Fachbereich Mathematik (AZ), Universit\"at Hamburg, Bundesstra\textup{\ss}e 55, 20146 Hamburg, Germany}
	\email{steven.charlton@uni-hamburg.de}
	
	\author{Michael E. Hoffman}
	\address{Department of Mathematics, U.S. Naval Academy, Annapolis, MD 21402, USA}
	\email{meh@usna.edu}
	
	\keywords{Multiple zeta values, multiple $t$ values, alternating MZV's, alternating M$t$V's, parity theorem, Hopf algebra, stuffle product, motivic MZV's, special values, generating series}
	\subjclass[2020]{Primary 11M32, 11G55; Secondary 33B30, 33B15}

	\begin{abstract}
		For a composition $I$ whose first part exceeds 1, we can define the multiple $t$-value $t(I)$ as the sum of all the terms in the series for the multiple zeta value $\zeta(I)$ whose denominators are odd.  In this paper we show that if $I$ is composition of $n\ge 3$, then $t(I)=(-1)^{n-1}t(\bar I)$ mod products, where $\bar I$ is the reverse of $I$, and
		both sides are suitably regularized when $I$ ends in 1.  This result is not true for multiple zeta values, though there is an argument-reversal result that does hold for them (and for multiple $t$-values as well).  We actually prove a more general version of this result, and then use it to establish explicit formulas for several classes of
		multiple $t$-values and interpolated multiple $t$-values.
	\end{abstract}
	
	\maketitle
	
	\section{Introduction}
	We define multiple zeta values $\zeta(n_1,\dots,n_\ell)$
	and multiple $t$-values $t(n_1,\dots,n_\ell)$ by
	\[
	\zeta(n_1,n_2\dots,n_\ell)=
	\sum_{1\le k_1<k_2<\dots<k_\ell}\frac1{k_1^{n_1} k_2^{n_2}\cdots k_\ell^{n_\ell}}
	\]
	and
	\[
	t(n_1,n_2\dots,n_\ell)=
	\sum_{1\le k_1<k_2<\dots<k_\ell}\frac1{(2k_1-1)^{n_1} 
		(2k_2-1)^{n_2}\cdots (2k_\ell-1)^{n_\ell}}
	\]
	respectively.  The series converge provided $n_\ell>1$.
	We note that the set of multiple zeta values and the
	set of multiple $t$-values are both algebras under the
	``stuffle" product, e.g.,
	\[
	t(2)t(3,2)=t(2,3,2)+t(5,2)+2t(3,2,2)+t(3,4).
	\]
	More formally, both the multiple zeta values and
	the multiple $t$-values are homomorphic images of
	a subalgebra of the quasi-symmetric functions
	$\QS$.  Now $\QS$ can be regarded as the vector
	space on words in noncommuting variables 
	$z_1,z_2,\dots$, with a commutative product $*$
	defined inductively by 
	\[
	z_iu*z_jv=z_i(u*z_jv)+z_j(z_iu*v)+z_{i+j}(u*v) .
	\]
	Then $\QS$ has a subalgebra $\QS^0$ generated by
	1 and all words ending in $z_i$, $i>1$; there
	are homomorphisms $\zt,t:\QS^0\to\mathbb{R}$ given by
	\begin{align*}
	\zt(z_{i_1}\cdots z_{i_k})&=\zt(i_1,\dots,i_k)\\
	t(z_{i_1}\cdots z_{i_k})&=t(i_1,\dots,i_k) .
	\end{align*}
	By the result of Malvenuto and Reutenauer
	\cite{malvenuto-reutenauer}, $\QS$ is a polynomial
	algebra on Lyndon words in the $z_i$, and the only 
	Lyndon word ending in $z_1$ is $z_1$ itself.  Thus
	$\QS=\QS^0[z_1]$, and we can extend the homomorphisms
	above to homomorphisms $\QS\to\mathbb{R}[T]$ by sending $z_1$ to $T$.  We denote these by $\reg^\ast_{T}\zt$ and $\reg^\ast_{T}t$ respectively.
	It is convenient to set $T=0$ in the first case
	and $T=\log 2$ in the second.  We will typically work with the stuffle regularization, and so for notational simplicity we shall suppress the asterisk from the notation and write \( \reg_{T} = \reg_{T}^\ast \), unless we need to clarify which type of regularization is in use.\medskip
	
	Our principal result is as follows.
	\begin{Thm}[Symmetry Theorem]
		\label{thm:scon}
		If $I$ is a composition of $n\ge 3$, then
		\[
		\reg^\ast_{T=\log2}t(I)=(-1)^{n-1}
		\reg^\ast_{T=\log2}t(\bar I) \pmod{\text{\rm products}},
		\]
		where $\bar I$ is the reverse of $I$.
	\end{Thm}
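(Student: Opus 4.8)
The plan is to pass to the iterated-integral (shuffle) model of multiple $t$-values, extract the sign $(-1)^{n-1}$ and the reversal from the antipode of the shuffle Hopf algebra, and then reduce the resulting reversed integral back to $t(\bar I)$ modulo products.

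First I would set up the representation of $t(I)$ as an iterated integral on $[0,1]$ built from the forms $\omega_0=\dd t/t$, the innermost form $\omega_1=\dd t/(1-t^2)$, and the depth-increasing bridge form $\omega_b = t\,\dd t/(1-t^2)$; concretely $t(n_1,\dots,n_\ell)=\int_0^1\omega_1\omega_0^{n_1-1}\omega_b\omega_0^{n_2-1}\cdots\omega_b\omega_0^{n_\ell-1}$, the rightmost letter lying nearest $t=1$. Since $\omega_1,\omega_b=\frac12(\frac{\dd t}{1-t}\pm\frac{\dd t}{1+t})$, this realises $t$ as a combination of level-two (alternating) iterated integrals, and the assignment $w\mapsto V(w):=\int_0^1 w$ is a homomorphism from the shuffle algebra on these forms to $\R$, with shuffle regularisation of the letters that diverge at the tangential base points $0$ and $1$. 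The quotient ``mod products'' is the space of indecomposables of the value algebra, which is common to the shuffle and stuffle structures, so I may compute in whichever product is convenient.

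The decisive step is the antipode. In the connected graded shuffle Hopf algebra the antipode is signed reversal, $S(a_1\cdots a_N)=(-1)^N a_N\cdots a_1$, and in any connected graded Hopf algebra $S(x)+x$ is decomposable; pushing this through $V$ gives $V(w)\equiv(-1)^{|w|-1}V(\tilde w)\pmod{\text{products}}$, where $\tilde w$ denotes the reversed word. Taking $w=w(I)$, for which $|w|=n$, immediately yields $t(I)\equiv(-1)^{n-1}V(\tilde w(I))\pmod{\text{products}}$, producing exactly the asserted sign. It then remains to show $V(\tilde w(I))\equiv t(\bar I)\pmod{\text{products}}$.

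This last identification is the crux and the main obstacle. The reversed word $\tilde w(I)=\omega_0^{n_\ell-1}\omega_b\cdots\omega_0^{n_1-1}\omega_1$ is not the word $w(\bar I)=\omega_1\omega_0^{n_\ell-1}\omega_b\cdots\omega_b\omega_0^{n_1-1}$ of the reversed composition: the distinguished innermost letter $\omega_1$ has migrated to the opposite boundary, and $\tilde w(I)$ is divergent at both endpoints. I would re-expand the doubly regularised $V(\tilde w(I))$ into convergent values by peeling boundary letters off through the shuffle- (equivalently stuffle-) regularisation relations; the divergence at $t=1$ contributes precisely the constant $\log 2=\reg_{T=\log2}t(1)$ fixed in the excerpt, which is why the statement is normalised at $T=\log 2$. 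Modulo products the sole surviving indecomposable should be $t(\bar I)$, with every rearrangement correction a genuine product. To keep this bookkeeping honest and to run the unavoidable induction on the weight $n$, I would pass to the motivic lift $t^{\mot}(I)$ and compute Goncharov's infinitesimal coproduct $D_r$, aiming to prove $D_r\bigl(t^{\mot}(I)-(-1)^{n-1}t^{\mot}(\bar I)\bigr)=0$ for all $r$ (the tensor factors having lower weight, where the theorem is already available), so that the difference is a rational multiple of the single primitive $\zt^{\mot}(n)$, pinned down by the lowest-depth term. I expect the real work — and the reason a ``more general version'' is proved first — to lie in organising these boundary and regularisation contributions into a clean symmetry of the full generating series of iterated integrals that specialises to the stated congruence.
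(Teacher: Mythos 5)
Your first two steps are sound: the iterated-integral representation of \( t(I) \) with the forms \( \omega_0,\omega_1,\omega_b \) is correct, and the shuffle antipode does give \( V(w(I))\equiv(-1)^{n-1}V(\widetilde{w}(I)) \pmod*{\text{products}} \) (this is the identity from \cite[Eq.~(28)]{galois} that we ourselves use in \autoref{sec:t3223}). But the step you correctly flag as the crux, namely \( V(\widetilde{w}(I))\equiv t(\bar I) \pmod*{\text{products}} \), is not a bookkeeping matter that regularisation can settle. The word \( \widetilde{w}(I)=\omega_0^{n_\ell-1}\omega_b\cdots\omega_b\omega_0^{n_1-1}\omega_1 \) differs from \( w(\bar I)=\omega_1\omega_0^{n_\ell-1}\omega_b\cdots\omega_b\omega_0^{n_1-1} \) by cyclically moving the letter \( \omega_1 \) from one end to the other, and such a rotation changes the underlying indecomposable, not merely the product terms; regularisation at the two tangential base points only redistributes the divergent boundary letters, it never converts \( \widetilde{w}(I) \) into \( w(\bar I) \). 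The decisive sanity check is the MZV case: your argument applies verbatim with \( \omega_1=\omega_b=\dd t/(1-t) \), yet the conclusion is false there, since \( \zeta(2,3)-\zeta(3,2)=-10\zeta(5) \pmod*{\text{products}} \). Whatever makes the theorem true must therefore enter exactly at the step you have left open, and nothing in your proposal distinguishes \( t \) from \( \zeta \) at that point. (In our proof the distinction appears in the partial-fraction evaluation of the two-sided sum \( \sum_{-M\le k_1<\cdots<k_m\le M}\ee(\sum_i\phi_ik_i)/\prod_i(2k_i-1-y_i) \): for odd denominators no lower-depth term of the form \( \tfrac{1}{y_i}\Tic(\cdots) \) survives, whereas for \( \zeta \) one does; see the Remark following \autoref{texp}.)

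Your fallback via the motivic coproduct has the same gap in a different guise. Even granting \( D_r\big(t^\mot(I)-(-1)^{n-1}t^\mot(\bar I)\big)=0 \) for all \( r<n \) --- itself not automatic, since the tensor factors of \( D_r \) applied to \( t^\mot(I) \) are iterated integrals between points of \( \{0,\pm1\} \) to which the induction hypothesis does not directly apply --- the conclusion is only that the difference equals \( c\,\zeta^\mot(n) \) modulo products for some \( c\in\Q \). For odd \( n \) this is an irreducible, so the theorem is precisely the assertion \( c=0 \), and ``pinned down by the lowest-depth term'' is the entire remaining difficulty; the MZV example above shows that \( c\neq0 \) cannot be excluded on formal or Hopf-algebraic grounds. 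Determining that constant is what the analytic evaluation of the \( t \)-Bernoulli series accomplishes in our proof, and converting the resulting products of \( \zeta \)'s into products of \( t \)'s additionally requires Murakami's theorem, a nontrivial motivic input that your argument would also eventually need.
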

	We will in fact prove a stronger version of this result in \autoref{thm:symgsfull}, which holds for M$t$V's at any roots of unity, and provides the neglected product terms via a generating series identity involving M$t$V's and MZV's.  (We will use a result of Murakami \cite{murakami} to replace MZV's with M$t$V's to establish our claim in the case of classical M$t$V's.) \medskip
	
	We can define interpolated multiple $t$-values $t^r$ in the same way as S. Yamamoto \cite{yamamoto} defined interpolated multiple zeta values, e.g.,
	\[
	t^r(2,1,3)=t(2,1,3)+rt(3,3)+rt(2,4)+r^2t(6).
	\]
	Then $t^0=t$, and we write $t^{\star}$ for
	$t^1$.  The preceding result has the following corollary.
	\begin{Cor}\label{cor:symtr} For any composition $I$ of $n\ge 3$, 
		\[
		\reg^\ast_{T=\log2}t^r(I)=(-1)^{n-1}\reg^\ast_{T=\log2}t^r(\bar I) \pmod{\text{\rm products}}.
		\]
	\end{Cor}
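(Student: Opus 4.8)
The plan is to deduce the corollary from the already-established Symmetry Theorem (\autoref{thm:scon}) purely formally, using that $t^r(I)$ is by construction an $\R[r]$-linear combination of ordinary multiple $t$-values $t(J)$, together with the fact that reversal induces a weight- and depth-preserving bijection on the index set of that combination. First I would recall Yamamoto's definition in the present notation: writing $\ell(I)$ for the number of parts of $I$, one has
\[
t^r(I)=\sum_{J} r^{\ell(I)-\ell(J)}\,t(J),
\]
where $J$ ranges over all compositions obtained from $I$ by replacing one or more runs of adjacent parts by their sums (for $I=(2,1,3)$ these are $(2,1,3),(3,3),(2,4),(6)$, reproducing the displayed example). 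Each such $J$ is again a composition of the same integer $n=\abs{I}$, and since $\reg^\ast_{T=\log2}$ is $\R$-linear, the identity survives regularization:
\[
\reg^\ast_{T=\log2}t^r(I)=\sum_{J} r^{\ell(I)-\ell(J)}\,\reg^\ast_{T=\log2}t(J).
\]

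Next I would apply \autoref{thm:scon} term by term. Because each $J$ is a composition of $n\ge 3$, we have $\reg^\ast_{T=\log2}t(J)=(-1)^{n-1}\reg^\ast_{T=\log2}t(\bar J)\pmod{\text{products}}$; the crucial point is that the sign $(-1)^{n-1}$ is \emph{the same} for every $J$, since combining adjacent parts preserves the total weight $n$. Thus it factors out, giving, modulo products,
\[
\reg^\ast_{T=\log2}t^r(I)=(-1)^{n-1}\sum_{J} r^{\ell(I)-\ell(J)}\,\reg^\ast_{T=\log2}t(\bar J).
\]

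Finally I would reindex by the map $J\mapsto\bar J$. Reversal carries a composition obtained from $I$ by combining adjacent blocks to the composition obtained from $\bar I$ by combining the correspondingly reversed blocks, and this is a bijection between the two index sets that preserves the number of parts, so $\ell(\bar J)=\ell(J)$ and $\ell(\bar I)=\ell(I)$. Substituting $J'=\bar J$ turns the right-hand sum into $(-1)^{n-1}\sum_{J'}r^{\ell(\bar I)-\ell(J')}\reg^\ast_{T=\log2}t(J')=(-1)^{n-1}\reg^\ast_{T=\log2}t^r(\bar I)$, which is exactly the asserted congruence.

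The genuine content lies entirely in the Symmetry Theorem, so the only point requiring care is the interaction of the interpolation with regularization: when $I$ ends in $1$, those $J$ that leave the final part $1$ uncombined yield divergent $t(J)$, so I must be sure that $\reg^\ast_{T=\log2}t^r(I)$ is understood as the above linear combination of the individually regularized values (equivalently, that interpolation and regularization commute by definition) rather than as a quantity only defined after assembling $t^r$. Granting this, the decomposables form an $\R[r]$-submodule, so the term-by-term congruences add without obstruction and the argument closes.
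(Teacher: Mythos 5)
Your proof is correct and rests on exactly the same ingredients as the paper's (the Symmetry Theorem \autoref{thm:scon} applied through the definition of $t^r$ as a linear combination of $t(J)$ over coarsenings $J$ of $I$, all of which have the same weight $n$, so the sign $(-1)^{n-1}$ is uniform and reversal permutes the index set); the paper merely packages this as an induction on $\ell(I)$, with the base case handled by the decomposability of $t(n)$ for $n\ge 4$ even, which in your version is subsumed by applying \autoref{thm:scon} to the length-one coarsening. Your observation that $\reg^\ast_{T=\log 2}t^r$ is by definition the termwise-regularized combination (i.e.\ $\reg^\ast_{T=\log2}t\circ\Sigma^r$, as used in the proof of \autoref{thm:dep}) correctly disposes of the only regularization subtlety.
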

	\begin{proof}
		Induct on the length of $I$, using the definition of the interpolated multiple $t$-value; to start the induction, note that $t(n)$ is decomposable for $n\ge 4$ even. 
	\end{proof}
	Using the Hopf algebra structure on the
	interpolated multiple $t$-values (see
	\cite{hoffman20}, or alternatively \cite[Lemma 4.2.2]{glanoisTh},\cite[Lemma 3.3]{glanoisUnramified}, at least for \( r \in \{ 0, \half, 1 \} \)), we can prove the following
	result.  
	\begin{Thm}[Stuffle antipode]\label{thm:dep}
		For any composition $I$,
		\[
		\reg^\ast_{T=\log2}t^r(I)=
		(-1)^{\ell(I)-1}\reg^\ast_{T=\log2}t^{1-r}(\bar I)
		\pmod{\text{\rm products}},
		\]
		where $\ell(I)$ is the length (number of parts) of $I$.
	\end{Thm}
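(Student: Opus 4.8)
The plan is to realise this statement as the antipode identity of the stuffle Hopf algebra, after encoding Yamamoto's interpolation as a one-parameter group of operators. Recall that $(\QS, *, \Delta)$ is a connected Hopf algebra graded by weight, where $\Delta$ is deconcatenation, with antipode $S$; the homomorphism $\reg_{T=\log2}t\colon\QS\to\R[T]$ (sending $z_1\mapsto T=\log2$) carries decomposables, i.e.\ products of positive-weight elements, to products of regularised M$t$V's. I would first encode the interpolation by the linear operator $\phi_r$ with $\phi_r(z_{i_1}\cdots z_{i_k})=\sum_J r^{\,k-\ell(J)}z_J$, the sum running over all compositions $J$ obtained by merging (adding) adjacent parts, so that $\reg_{T}t^r=\reg_{T}t\circ\phi_r$; this is compatible with the convergent case because $\phi_r$ preserves words ending in some $z_i$ with $i>1$, hence preserves $\QS^0$.

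The three structural facts I would establish (or cite) are: (i) $\phi_r=\exp(rD)$, where $D$ merges one adjacent pair, so $\{\phi_r\}_r$ is a one-parameter group with $\phi_r\phi_s=\phi_{r+s}$ and $\phi_0=\id$; (ii) writing $\mathrm{rev}$ for word reversal and $\nu$ for the sign-by-length operator $\nu(w)=(-1)^{\ell(w)}w$, the commutations $\mathrm{rev}\,\phi_r=\phi_r\,\mathrm{rev}$, $\mathrm{rev}\,\nu=\nu\,\mathrm{rev}$ and the conjugation $\nu\,\phi_r\,\nu=\phi_{-r}$ hold, all immediate from the definition of $D$ since merging commutes with reversal and each merge lowers the length by one; and (iii) the explicit stuffle antipode
\[
S(w)=(-1)^{\ell(w)}\phi_1(\overline{w}),\qquad\text{equivalently}\qquad S=\phi_1\,\nu\,\mathrm{rev}.
\]
Fact (iii) is the combinatorial heart: the antipode is signed reversal together with all consecutive mergings, and $\phi_1(\overline{w})$ is precisely the sum over consecutive block-decompositions of $\overline{w}$. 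I would verify it by induction from $m(S\otimes\id)\Delta=\eta\epsilon$, or cite the known quasi-shuffle antipode.

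With these in hand the computation is short. Using (i)--(iii) and $\ell(\overline{w})=\ell(w)$,
\[
S(\phi_r(w))=\phi_1\,\nu\,\mathrm{rev}(\phi_r(w))=\phi_1\,\nu\,\phi_r(\overline{w})=(-1)^{\ell(w)}\phi_1\,\phi_{-r}(\overline{w})=(-1)^{\ell(w)}\phi_{1-r}(\overline{w}).
\]
On the other hand, writing $\Delta w=w\otimes 1+1\otimes w+\Delta' w$ with $\Delta'w\in\QS_+\otimes\QS_+$ (the augmentation ideal), the antipode axiom gives $S(x)+x=-\,m(S\otimes\id)\Delta'x$ for every $x\in\QS_+$, and the right-hand side is a sum of stuffle products of positive-weight words; hence $S(x)\equiv -x\pmod{\text{products}}$. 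Applying this to $x=\phi_r(w)$ and combining with the display yields $(-1)^{\ell(w)}\phi_{1-r}(\overline{w})\equiv-\phi_r(w)$, that is, $\phi_r(w)\equiv(-1)^{\ell(w)-1}\phi_{1-r}(\overline{w})$ modulo decomposables in $\QS$. I would then apply $\reg_{T=\log2}t$, which sends the explicit decomposable remainder to a combination of products of regularised M$t$V's, to conclude $\reg_{T=\log2}t^r(I)\equiv(-1)^{\ell(I)-1}\reg_{T=\log2}t^{1-r}(\bar I)\pmod{\text{products}}$.

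I expect the main obstacle to be twofold. First, pinning down fact (iii) with the correct signs and, more importantly, making precise the meaning of ``modulo products'' so that the identity descends from $\QS$ to the regularised M$t$V's; the cleanest route is to prove everything as an identity in $\QS$ modulo $\QS_+*\QS_+$ and then push forward. Second, the regularisation bookkeeping---checking that $\reg_{T=\log2}t^r=\reg_{T=\log2}t\circ\phi_r$ genuinely holds with $z_1\mapsto\log2$, i.e.\ that interpolation and stuffle-regularisation commute---requires the observation that $\phi_r$ respects the splitting $\QS=\QS^0[z_1]$ compatibly. This is exactly where the choice $T=\log2$ and the Hopf-algebra references enter, and it is the step I would treat most carefully.
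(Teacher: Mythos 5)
Your proof is correct, and it is a close relative of the paper's argument rather than a transcription of it: both rest on the explicit quasi-shuffle antipode (signed reversal composed with the sum over all contractions of adjacent parts) together with the generic fact that in a connected graded bialgebra $S(x)\equiv -x$ modulo products of positive-weight elements. The difference is where the interpolation parameter lives. The paper works inside the $r$-interpolated Hopf algebra $(\H^1,\r,\Delta)$ of \cite{hoffman20}, cites its antipode $S=\Sigma^{1-2r}TR$ and the alternative formula \autoref{ant}, and uses that $S$ is an involution to produce the explicit identity \autoref{eqn:stuffleantipode}, to which $\reg^\ast_{T=\log2}t\circ\Sigma^r$ is then applied. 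You instead stay in the ordinary stuffle Hopf algebra, take only the classical ($r=0$) antipode $S=\phi_1\,\nu\,\mathrm{rev}$ as input, and transport the interpolation through it via the operator identities $\phi_r\phi_s=\phi_{r+s}$, $\nu\phi_r\nu=\phi_{-r}$, $\mathrm{rev}\,\phi_r=\phi_r\,\mathrm{rev}$ --- all of which you verify correctly, and which taken together are essentially equivalent to the statement that $\Sigma^{1-2r}TR$ is the antipode for $\r$. Your route buys independence from the interpolated product and its antipode; the paper's route buys the explicit formula \autoref{eqn:stuffleantipode} for the neglected product terms, which it reuses later (e.g.\ for the $t^\star(3,\{2\}^n,3)$ evaluation in \autoref{sec:tstar3223}), whereas your congruence via $S(x)\equiv-x$ discards them. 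Two small remarks: your convention $r^{\,k-\ell(J)}$ (exponent equal to the number of merges) is the one that matches Yamamoto's definition and the example $t^r(2,1,3)$ in the introduction; and the compatibility $\reg^\ast_{T=\log2}t^r=\reg^\ast_{T=\log2}t\circ\phi_r$ of interpolation with stuffle regularization, which you rightly flag as the step needing care, is used at exactly the same level of detail in the paper's own proof, so nothing is lost there.
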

	\begin{proof}
		Let $(\H^1,\r,\Delta)$ be the Hopf algebra with underlying vector space $\H^1=\Q\<z_1,z_2,\dots\>$, the interpolated product $\r$, and the deconcatenation coproduct $\Delta$.
		As shown in \cite{hoffman20}, this Hopf algebra has antipode
		$S=\Sigma^{1-2r}TR$, where $R$ reverses words, $T(w)=(-1)^{\ell(w)}w$, and 
		\[
		\Sigma^p(z_I)=\sum_{I_1\sqcup\cdots\sqcup I_k=I}p^kz_{|I_1|}\cdots z_{|I_k|},
		\]
		where for a composition $I=(i_1,\dots,i_n)$, $z_I$ denotes the word $z_{i_1}\cdots z_{i_n}$,  $|I|\coloneqq i_1+\cdots+i_n$ and $\sqcup$ is juxtaposition of compositions.   By induction one can show that $S$ has the alternative formula
		\begin{equation}
		\label{ant}
		S(z_I)=\sum_{I_1\sqcup\cdots\sqcup I_k=I}(-1)^kz_{I_1}\r\cdots\r z_{I_k} .
		\end{equation}
		Since $(\H^1,\r,\Delta)$ is commutative, $S$ is an algebra homomorphism and an involution.  Apply $S$ to both sides of
		\autoref{ant} to get
		\begin{equation}\label{eqn:stuffleantipode}
		z_I=\sum_{I_1\sqcup\cdots\sqcup I_k=I}(-1)^{\ell(I)-k}\Sigma^{1-2r}R(z_{I_1})\r\cdots\r\Sigma^{1-2r}R(z_{I_k}),
		\end{equation}
		and then apply $\reg^\ast_{T=\log2}t^r=\reg^\ast_{T=\log2}t\circ\Sigma^r$ to both sides of the latter equation to get
		\begin{equation*}
		\reg^\ast_{T=\log2}t^r(I)=\sum_{I_1\sqcup\cdots\sqcup I_k=I}(-1)^{\ell(I)-k}
		\reg^\ast_{T=\log2}t^{1-r}(\bar{I_1})\cdots \reg^\ast_{T=\log2}t^{1-r}(\bar{I_k}),
		\end{equation*}
		from which the conclusion follows.
	\end{proof}
	
	\autoref{thm:scon} and \autoref{thm:dep} imply the following.
	\begin{Cor}
		For any composition $I$, with sum $|I|\ge 3$, 
		\[
		\reg^\ast_{T=\log2}t^r(I)=
		(-1)^{|I|-1}\reg^\ast_{T=\log2}t^r(\bar I)=
		(-1)^{|I|-\ell(I)}\reg^\ast_{T=\log2}t^{1-r}(I)\ \pmod{\mathrm{products}}\,.
		\]
	\end{Cor}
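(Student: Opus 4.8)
The plan is to derive the two asserted equalities independently, each from one of the results already established, and then to check that the exponents of $-1$ match up. Because both equalities are congruences modulo products, the whole argument is a matter of applying the earlier theorems to the correct compositions and bookkeeping the signs; I do not expect any genuine obstacle.

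For the first equality I would simply invoke \autoref{cor:symtr}. Since $I$ is a composition with $|I|=n\ge 3$, that corollary states exactly
\[
\reg^\ast_{T=\log2}t^r(I)=(-1)^{|I|-1}\reg^\ast_{T=\log2}t^r(\bar I)\pmod{\text{products}},
\]
so no further work is required. This is also the step that imposes the hypothesis $|I|\ge 3$, inherited from \autoref{cor:symtr} and ultimately from \autoref{thm:scon}.

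For the second equality I would apply \autoref{thm:dep}, but to the reversed composition $\bar I$ rather than to $I$ itself. Since \autoref{thm:dep} holds for an arbitrary composition (with no restriction on the weight), and since $\overline{\bar I}=I$ while $\ell(\bar I)=\ell(I)$, it gives
\[
\reg^\ast_{T=\log2}t^r(\bar I)=(-1)^{\ell(I)-1}\reg^\ast_{T=\log2}t^{1-r}(I)\pmod{\text{products}}.
\]
Multiplying through by $(-1)^{|I|-1}$ and collapsing the two sign factors via $(-1)^{|I|-1}(-1)^{\ell(I)-1}=(-1)^{|I|+\ell(I)-2}=(-1)^{|I|-\ell(I)}$ then yields precisely the identity between the first and the last expression in the statement.

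The only subtleties are formal: one must observe that the relation ``congruent modulo products'' is preserved under multiplication by the scalar $\pm1$ and is transitive, so that the displayed congruences may legitimately be chained together. Granting this, the corollary follows by combining the two equalities above, and its content resides entirely in \autoref{thm:scon} (through \autoref{cor:symtr}) and \autoref{thm:dep}.
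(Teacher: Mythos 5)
Your argument is correct and is exactly the intended one: the paper gives no separate proof, simply noting that \autoref{thm:scon} (via \autoref{cor:symtr} for general $r$) and \autoref{thm:dep} imply the corollary, and your sign bookkeeping $(-1)^{|I|-1}(-1)^{\ell(I)-1}=(-1)^{|I|-\ell(I)}$ together with the observations $\overline{\bar I}=I$ and $\ell(\bar I)=\ell(I)$ is precisely what is needed.
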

	\noindent In particular, if $|I|\ge 3$ then
	\[
	\reg^\ast_{T=\log2}t(I)=
	(-1)^{|I|-\ell(I)}\reg^\ast_{T=\log2}t^{\star}(I)\ \pmod{\text{products}}
	\]
	and 
	\[
	\reg^\ast_{T=\log2}t^{\half}(I)=0 \pmod{\text{products}}\,,
	\]
	if $|I|$ and $\ell(I)$ have opposite parity.  A parity theorem for MZV's, which reduces \( \zeta(I) \) to lower depth \( \pmod*{\text{products}} \), if $|I|$ and $\ell(I)$ have opposite parity, is well-known.  Extensions to arbitrary roots of unity and to multiple polylogarithm functions are established in \cite{panzer}.  (This is also related to the result from \cite[\S2.6]{goncharov}, holding on the torus, which we adapt and utilize for our result.)
	
	\begin{Rem} Since both multiple $t$-values and multiple zeta values are images of homomorphisms from $(\H^1,\r,\Delta)$ to the reals, \autoref{thm:dep} holds for multiple zeta values.  (In fact, \autoref{thm:dep} can also be deduced from \cite[Theorem 1.2]{bachmann}.)  But \autoref{thm:scon} fails for multiple zeta values.  For example,
		\[
		\zeta(2,3)+\zeta^{\star}(3,2)=\zt(2)\zt(3)
		\]
		but
		\[
		\zeta(2,3)-\zeta(3,2)=-10\zeta(5)+5\zeta(2)\zeta(3) .
		\]
	\end{Rem}
	
	\medskip
	\paragraph{\bf Outline.} The remainder of this paper is organized as follows.
	In Section 2 we give a proof of \autoref{thm:symgsfull},
	which as indicated above implies \autoref{thm:scon}.  For this we first establish an identity on MZV's and M$t$V's truncated to order \( M \) (\autoref{prop:truncgs}), then using some analytic results from \autoref{sec:appendix} we pass to the limit \( M \to \infty \) in order to obtain an identity among MZV's and M$t$V's at roots of unity (\autoref{thm:symgsrestricted}), with `non-degenerate' angles \( \phi_1,\ldots,\phi_m \), such that \( \phi_1 , \phi_m, \phi_1 + \cdots + \phi_m \neq 0 \).  Using the asymptotic expansion recalled in \autoref{sec:asymp} and \autoref{sec:asympolylog}, we extend this identity to all angles \( \phi_1,\ldots,\phi_m \) by considering how \( \phi_i \to 0 \), to establish the Symmetry Theorem in generating series form.  After application of some motivic results expressing MZV's via M$t$V's \cite{murakami} we obtain the Symmetry Theorem as stated above (\autoref{thm:scon}).
	
	In \autoref{sec:applications} we give three applications of
	these results.  First in \autoref{sec:t3223andstar}, to the computation of $t(3,\{2\}^n,3)$, where \( \{a\}^n \) denotes the string \( a, \ldots, a \) with \( n \) repetitions.  This requires Zagier's evaluation of \( \zeta(\{2\}^a,3,\{2\}^b) \) \cite{zagier2232}, Murakami's evaluation of \( t(\{2\}^a,3,\{2\}^b) \) \cite{murakami}, and the Ohno-Zagier Theorem \cite{ohno-zagier} to evaluate certain MZV combinations.  Second in \autoref{sec:t1221andt1mm1}, to formulas for $t(1,\{\overline{1}\}^n,1)$ and $t(1,\{2\}^n, 1)$.  This requires the evaluations of \( \reg_{T=\log2} t(\{2\}^a,1,\{2\}^b) \) and \( \reg_{T=\log2} t(\{\overline{1}\}^a,1,\{\overline{1}\}^b) \) given in \cite{charltont2212,charltontmm1m}, and an apparently new evaluation (\autoref{prop:zetamm1}) for \( \reg_{T=0} \zeta(\{\overline{1}\}^m,1) \).  Finally in \autoref{sec:th111ev}, to generating series formulas for the interpolated multiple $t$-values of the form $t^\half(\{1\}^n,2\ell+2)$ and $t^\half(2\ell+2, \{1\}^{2n}, 2\ell+2) $.  This requires solving a pair of simultaneous generating series relations, one obtained from the stuffle antipode (\autoref{thm:dep}) and one obtained from the Symmetry Theorem (\autoref{thm:symgsfull}) by a certain infinite series of differentials.  We also explicitly treat the case \( t^\half(2,\{1\}^n,2) \), for odd and even \( n \), in particular giving a conjectural evaluation in odd weight (\autoref{conj:thalf21od2}).
	
	\medskip
	\paragraph{\bf Acknowledgements.}  The authors thank the directors of the Max-Planck-Institut f\"ur Mathematik in Bonn for their support during the unusual circumstances of 2020, when this project was conceived.  
	The first author is grateful to Oleksiy Klurman for some helpful discussions on the analytic technicalities needed 
	in \autoref{sec:appendix}.
	He was supported by DFG Eigene Stelle grant CH 2561/1-1, for Projektnummer 442093436.  
	The second author received partial support from the Naval Academy Research Council.
	
	\section{Regularized version of the Symmetry Theorem}

	\subsection{Polylogarithms and regularization}
	
	We recall the definition of the multiple polylogarithm functions (MPL's) in several variables, whose asymptotic expansion will be important in the sequel.
	
	\begin{Def}[Multiple polylogarithm]
		For \( \abs{x_i} < 1 \), \( i = 1, \ldots, d \), the multiple polylogarithm is defined by
		\[
		\Li_{n_1,\ldots,n_d}(x_1,\ldots,x_d) = \sum_{1 \leq k_1 < k_2 < \cdots < k_d} \frac{x_1^{k_1} \cdots x_d^{k_d}}{k_1^{n_1} \cdots k_d^{n_d}} \,.
		\]
	\end{Def}
	Corollary 2.3.10 in \cite{zhao} shows that the series defining \( \Li_{n_1,\ldots,n_d}(x_1,\ldots,x_d) \) in fact converges (but perhaps only conditionally), for \( \abs{x_i} \leq 1 \), \( i = 1,\ldots,d \) if and only if \( (x_d,n_d) \neq (1,1) \). \medskip
	
	The behavior of \( \Li_{n_1,\ldots,n_{d-1},1}(x_1,\ldots,x_{d-1}, x_d) \) as \( x_d \to 1 \) (which is actually dependant on how \( x_d \to 1 \), and on whether \( n_{d-1} = 1 \) and then on how \( x_{d-1} \to 1 \)) can be used to define various notions of regularization.  This allows us to make sense of identities and results even in cases where not all of the objects of interest actually converge; in order to apply the symmetry result to the convergent value \( t(1,1,2) \), one necessarily needs to make sense of the divergent value \( t(2,1,1) \) somehow.  One can then show that
	\begin{align}\label{eqn:t112eg}
	t(1,1,2) + t(2,1,1) = -t(2) t(1,1) + t(1) t(1,2) + \frac{1}{2} t(2) \zeta(1,1) \,,
	\end{align}
	where \( t \) is stuffle-regularized with \( t(1) = \log2 \), and \( \zeta \) is stuffle-regularized with \( \zeta(1) = 0 \). 
	
	\medskip
	To this end, we more formally introduce the stuffle regularization of multiple polylogarithms, and related objects, and indicate how one computes it.  Polylogarithms can be multiplied with the stuffle product, generalizing the formula which holds for multiple zeta values and multiple \( t \) values.  For example
	\[
	\Li_{2,3}(a,b) \Li_1(c) = \begin{aligned}[t]
	& \Li_{1,2,3}(c,a,b) + \Li_{2,1,3}(a,c,b) + \Li_{2,3,1}(a,b,c) \\
	& + \Li_{2,4}(a,bc) + \Li_{3,3}(ac,b)
	\end{aligned}
	\]
	Ignoring convergence issues for the moment and viewing \( \Li_1(1) \) as a formal object (this is made rigorous by considering truncated versions, and allowing the summation bound to tend to infinity), any multiple polylogarithm of the form
	\[
	\Li_{n_1,\ldots,n_\ell,\{1\}^k}(x_1,\ldots,x_\ell,\{1\}^k) \,,
	\]
	with \( (n_\ell,x_\ell) \neq (1,1) \), where \( \{a\}^n \) denotes the string \( \overbrace{a, \ldots, a}^{\text{$n$ times}} \) with \( n \) repetitions, can be written as a polynomial in \( \Li_{1}(1) \) with convergent polylogarithm coefficients.  This is accomplished by considering
	\begin{align*}
	\Li_{n_1,\ldots,n_\ell,\{1\}^k}(x_1,\ldots,x_\ell,\{1\}^k) = {} 
	& \frac{1}{k} \Li_{n_1,\ldots,n_\ell,\{1\}^{k-1}}(x_1,\ldots,x_\ell,\{1\}^{k-1}) \Li_1(1) \\
	& + (\text{terms with \( < k \) trailing \( (n_i,x_i) = (1,1) \) entries})
	\end{align*}
	and recursively applying the process to all of the terms on the right-hand side.
	
	We shall write 
	\[
	\reg^\ast_T \Li_{n_1,\ldots,n_\ell,\{1\}^k}(x_1,\ldots,x_\ell,\{1\}^k)
	\]
	to denote this stuffle-regularization polynomial, where \( \Li_1(1) \) is replaced by the indeterminate \( T \).  This is called the stuffle regularization with parameter \( T \), and also applies to MZV's and M$t$V's. As in the introduction, we shall suppress the asterisk, and write \( \reg_T= \reg_T^\ast \), unless we need to clarify which type of regularization is in use.\medskip
	
	With this formalized, the identity in \autoref{eqn:t112eg} is then written as
	\begin{align*}
	& t(1,1,2) + \reg^\ast_{T=\log2} t(2,1,1) = \\
	& -t(2) \reg^\ast_{T=\log2} t(1,1) + \reg_{T=\log2}^\ast t(1) t(1,2) + \frac{1}{2} t(2) \reg^\ast_{T=0} \zeta(1,1)
	\end{align*}
	After regularization, this states
	\begin{align*}
	& 2 t(1,1,2) + t(1,3) - 2 t(1,2) \log2 - t(3)\log2 + t(2) \log^2 2 + \frac{1}{2} t(4) - \frac{1}{2} t(2)^2 + \frac{1}{4} t(2) \zeta(2) = 0 \,,
	\end{align*}
	which can be directly checked using the tables in  \cite[Appendix A]{hoffman19}.

	\subsection{Asymptotic expansions of polylogarithms}\label{sec:asympolylog}
	
	We recall the setup of the asymptotic expansion of polylogarithms introduced in \cite[\S{}2.10]{goncharov}.  Lemma 2.18 \cite{goncharov} establishes that, for \( \abs{x_i} \leq 1 \), with \( (n_\ell, x_\ell) \neq (1,1) \),  the power series 
	\[
	f(\eps) = \Li_{n_1,\ldots,n_\ell,\{1\}^k}(x_1,\ldots,x_\ell, \{1-\eps\}^k)
	\]
	has an asymptotic expansion (as \( \eps \to 0^{+} \)), which is a polynomial in \( \log(\eps) \), whose coefficients are explicitly computable \( \Q \)-linear combinations of lower depth MPL's.  Moreover the polynomial has degree \( k \), and weight \( w = n_1 + \cdots + n_\ell + k \) counting \( \log(\eps) \) as weight 1.  It is instructive to review the proof of this claim, by way of an example, as we will utilize a similar setup with \( x_d \to 1 \) through roots of unity in order to establish the regularized version of the Symmetry Theorem. \medskip
	
	More formally, we refer to \cite[\S{}3.7.4]{burgos-fresan}  for the preliminaries about logarithmic asymptotic expansions of continuous functions.  In particular, we make the following definition.
	
	\begin{Def}[{\cite[\S{}3.7.4]{burgos-fresan}}]
		Let \( f \colon (0, \tau) \to \mathbb{C} \) be a continuous function, \( 0 < \tau \leq 1 \).  We say that \( f \) admits a logarithmic asymptotic expansion of degree \( r \) if it can be written
		\[
		f(\eps) = f_0(\eps) + \sum_{k=0}^r a_k \log^k\eps \,,
		\]
		with \( \abs{f_0(\eps)} = O(\eps^{1-\delta}) \) for some \( \delta < 1 \), \( a_k \in \mathbb{C} \).
	\end{Def}
	
	We shall then write
	\[
	\reg^\asym f(\eps) \coloneqq \sum_{k=0}^r a_k \log^k \eps
	\]
	to denote this logarithmic asymptotic expansion. \medskip
	
	If it exists, this logarithmic asymptotic expansion is unique (Lemma 3.237 \cite{burgos-fresan}), as one can recover
	\[
	a_r = \lim_{t \to 0} \frac{f(t)}{\log^r t} \,.
	\]
	Upon knowing \( a_{s+1},\ldots,a_r \), one can then find
	\[
	a_s = \lim_{t\to0} \frac{f(t) - \sum_{k=s+1}^r a_k \log^k t}{\log^s t} \,,
	\]
	to determine the entire logarithmic asymptotic expansion. \medskip
	
	In order to compute the asymptotic expansion of 
	\[
	f(\eps) = \Li_{n_1,\ldots,n_d,\{1\}^k}(x_1,\ldots,x_d,\{1-\eps\}^k) \,,
	\]
	with \( (n_d, x_d) \neq (1,1) \), we apply the stuffle product to see
	\begin{align*}
	& \Li_{n_1,\ldots,n_d,\{1\}^{k-1}}(x_1,\ldots,x_d, \{1-\eps\}^{k-1}) \cdot \Li_{1}(1-\eps)  \\ & \quad {} = \begin{aligned}[t] & k \Li_{n_1,\ldots,n_d,\{1\}^k}(x_1,\ldots,x_d,\{1-\eps\}^k) \\
	& + \text{(terms with $<k$ trailing $(n_i,x_i) = (1-\eps,1)$ entries)} \,.
	\end{aligned}
	\end{align*}
	By recursion, we obtain that \( f(\eps) \) is a sum of products of terms of the form
	\begin{align*}
	\Li_{1}(1-\eps) 
	\quad\text{ and }\quad \Li_{m_1,\ldots,m_e}(y_1 (1-\eps)^{p_1}, \ldots, y_e (1-\eps)^{p_e}) \,, \quad (y_e, m_e) \neq (1, 1) 
	\,.
	\end{align*}
	Since
	\[
	\lim_{\eps\to0} \Li_{m_1,\ldots,m_e}(y_1 (1-\eps)^{p_1}, \ldots, y_e (1-\eps)^{p_e}) = \Li_{m_1,\ldots,m_e}(y_1, \ldots, y_e) 
	\]
	exists (as a convergent polylog), we claim that this value at \( \eps = 0 \) is the entire asymptotic expansion.  Indeed it amounts to the differentiability of the multiple polylogarithm at this point, as we can see that for any \( 0 < \delta < 1\),
	\begin{align*}
	& \lim_{\eps\to0} \frac{\Li_{m_1,\ldots,m_e}(y_1 (1-\eps)^{p_1}, \ldots, y_e (1-\eps)^{p_e}) - \Li_{m_1,\ldots,m_e}(y_1, \ldots, y_e) }{\eps^{1 - \delta}} \\ 
	& =  
	\lim_{\eps\to0} \eps^\delta \frac{\Li_{m_1,\ldots,m_e}(y_1 (1-\eps)^{p_1}, \ldots, y_e (1-\eps)^{p_e}) - \Li_{m_1,\ldots,m_e}(y_1, \ldots, y_e) }{\eps} \\
	& = 0 \cdot \frac{\dd }{\dd \eps} \bigg\rvert_{\eps=0} \Li_{m_1,\ldots,m_e}(y_1 (1-\eps)^{p_1}, \ldots, y_e (1-\eps)^{p_e}) \\
	&= 0 .
	\end{align*}
	In particular, we have that 
	\begin{align*}
	& \Li_{m_1,\ldots,m_e}(y_1 (1-\eps)^{p_1}, \ldots, y_e (1-\eps)^{p_e}) \\
	& - \reg^\asym \Li_{m_1,\ldots,m_e}(y_1 (1-\eps)^{p_1}, \ldots, y_e (1-\eps)^{p_e}) = O(\eps^{1-\delta}) \,, \text{any \( 1 - \delta < 1 \)} .
	\end{align*}
	Finally, as
	\[
	\Li_1(1-\eps) = -\log\eps
	\]
	exactly, we therefore have that the asymptotic expansion of \( f(\eps) \) is given just as the sum of products of the individual asymptotic expansions.
	
	\begin{Eg}
		For example with \( x \neq 1 \), we compute using the stuffle product that
		\begin{align*}
		& \Li_{1,1,1}(x,1-\eps,1-\eps) = \\
		& 
		\Li_{1,1,1}(1-\eps ,1-\eps ,x)
		+\frac{1}{2} \Li_3(x (1-\eps )^2)
		+\frac{1}{2} \Li_{2,1}((1-\eps )^2,x)
		+\Li_{1,2}(1-\eps ,x (1-\eps )) \\
		& 
		-\frac{1}{2} \Li_{1,2}(x,(1-\eps )^2)
		-\Li_1(1-\eps ) \Li_{1,1}(1-\eps ,x)
		-\Li_1(1-\eps ) \Li_2(x (1-\eps )) \\
		& +\frac{1}{2} \Li_1(x) \Li_1(1-\eps )^2 
		\end{align*}
		The asymptotic expansions of each term are now readily found by the above analysis, and we have
		\begin{align*}
		& \reg^\asym \Li_{1,1,1}(x,1-\eps,1-\eps) = \\
		& \Li_{1,1,1}(1,1,x)
		+ \frac{1}{2}\Li_3(x) 
		+ \Li_{1,2}(1,x)
		- \frac{1}{2} \Li_{1,2}(x,1)
		+ \frac{1}{2} \Li_{2,1}(1,x) \\
		& + \Li_{1,1}(1,x) \log\eps
		+ \Li_2(x) \log\eps
		+\frac{1}{2}  \Li_1(x) \log ^2\eps \,.
		\end{align*}
	\end{Eg}
	
	Of significant interest and use will be the constant term of such an asymptotic expansion; write \( [\log^0\eps] \reg^\asym f(\eps) \eqqcolon \reg^\asym_0 f(\eps) \) to denote this constant term.  In particular, we will use this to make sense of identities in a limiting case, in terms of regularized values of multiple zeta values and multiple $t$-values.
	
	By extending the argument above, we notice that the asymptotic expansion of
	\[
	\Li_{n_1,\ldots,n_d,\{1\}^k}(x_1,\ldots,x_d, \{\alpha(\eps)\}^k) \,,
	\]
	with \( (n_d, x_d) \neq (1,1) \) and \( \alpha(\eps) \) with \( \alpha(\eps) \xrightarrow{\eps\to0} 1 \) (for sufficiently nice \( \alpha \)) depends only on the asymptotic expansion of \( \Li_1(\alpha(\eps)) \).  The constant term of
	\[
	\reg^\asym \Li_{n_1,\ldots,n_d,\{1\}^k}(x_1,\ldots,x_d, \{\alpha(\eps)\}^k)
	\]
	therefore only depends on the constant term of the asymptotic expansion of \( \Li_1(\alpha(\eps)) \).  The stuffle product structure used to obtain this asymptotic expansion means that the constant term is obtained by the stuffle regularization, with regularization parameter given by 
	\[ T_0 = [\log^0\eps] \reg^\asym \Li_1(\alpha(\eps)) \,.
	\]
	We have established the following lemma.
	
	\begin{Lem}(Asymptotic expansion and stuffle regularization)\label{lem:asympviastuff}
		For \( \alpha(\eps) \to 1 \) as \( \eps \to 0^+ \), the constant term \( [\log^0\eps] A \) in the asymptotic expansion
		\[
		A = \reg^\asym \Li_{n_1,\ldots,n_d,\{1\}^k}(x_1,\ldots,x_d, \{\alpha(\eps)\}^k)
		\]
		is given by 
		\[
		[\log^0\eps] A = \reg^\ast_{T=T_0} \Li_{n_1,\ldots,n_d,\{1\}^k}(x_1,\ldots,x_d, \{1\}^k) \,.
		\]
		Here \( \reg^\ast_{T=T_0} \) denotes the stuffle regularized version of \( \Li_{n_1,\ldots,n_d,\{1\}^k}(x_1,\ldots,x_d, \{1\}^k) \), with regularization parameter
		\[
		T_0 \coloneqq \reg^\ast_{T=T_0} \Li_1(1) 
		\]
		given by
		\[
		T_0 = [\log^0\eps] \reg^\asym \Li_1(\alpha(\eps)) \,.
		\]
	\end{Lem}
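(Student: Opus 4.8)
The plan is to run the stuffle recursion described above simultaneously at $\alpha(\eps)$ and at $1$, and to observe that it produces one and the same polynomial $P(T)=\sum_{j=0}^{k} C_j\,T^j$ with convergent polylogarithm coefficients $C_j$: the right-hand side of the claim is then $P(T_0)$, while the left-hand side is the constant term obtained by substituting $\reg^\asym\Li_1(\alpha(\eps))$ for $T$ in $P$.

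First I would apply the recursion to the argument of $A$, writing
\[
\Li_{n_1,\ldots,n_d,\{1\}^k}(x_1,\ldots,x_d,\{\alpha(\eps)\}^k)=\sum_{j=0}^{k} C_j(\alpha(\eps))\,\Li_1(\alpha(\eps))^j,
\]
where each $C_j(\alpha(\eps))$ is a $\Q$-linear combination of convergent polylogs $\Li_{m_1,\ldots,m_e}(y_1\alpha(\eps)^{p_1},\ldots,y_e\alpha(\eps)^{p_e})$ with $(y_e,m_e)\neq(1,1)$. The combinatorially identical recursion at $\alpha=1$ gives the decomposition used to define the regularization polynomial, so that by construction
\[
\reg^\ast_{T=T_0}\Li_{n_1,\ldots,n_d,\{1\}^k}(x_1,\ldots,x_d,\{1\}^k)=\sum_{j=0}^k C_j(1)\,T_0^{\,j}.
\]

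Next I would take asymptotic expansions term by term. By the differentiability argument established above, each convergent polylog appearing in $C_j(\alpha(\eps))$ has asymptotic expansion equal to its value at $\eps=0$, with no logarithmic contribution, so $\reg^\asym C_j(\alpha(\eps))=C_j(1)$; and since the asymptotic expansion of a product is the product of the individual expansions, with $\Li_1(\alpha(\eps))$ furnishing the only logarithmic terms, we obtain
\[
A=\sum_{j=0}^k C_j(1)\,\bigl(\reg^\asym\Li_1(\alpha(\eps))\bigr)^{\,j}.
\]
Because $\reg^\asym\Li_1(\alpha(\eps))$ is a polynomial in $\log\eps$ whose constant term is $T_0$, the constant term of its $j$-th power equals $T_0^{\,j}$; extracting $[\log^0\eps]$ therefore yields $\sum_j C_j(1)T_0^{\,j}$, which is exactly the displayed expression for $\reg^\ast_{T=T_0}$.

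The step I expect to demand the most care is the bookkeeping that matches the two recursions: one must verify that peeling off a trailing factor $\Li_1(\alpha(\eps))$ in the asymptotic recursion corresponds precisely to peeling off a power of the formal indeterminate $T$ in the stuffle regularization, and that the coefficient polylogs $C_j(\alpha(\eps))$ specialize continuously to $C_j(1)$ as $\eps\to0$. Once this correspondence is secured, the passage to constant terms is automatic, since multiplying by a polynomial in $\log\eps$ whose constant term is $T_0$ acts on $[\log^0\eps]$ as multiplication by $T_0$.
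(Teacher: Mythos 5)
Your proposal is correct and follows essentially the same route as the paper: peel off trailing $\Li_1(\alpha(\eps))$ factors via the stuffle recursion, use the differentiability of convergent multiple polylogarithms at $\eps=0$ to identify the asymptotic expansion of each coefficient with its value $C_j(1)$, and observe that the product/constant-term structure reduces the claim to the stuffle regularization with parameter $T_0$. Your explicit organization of the argument as a single polynomial identity $\sum_j C_j(\alpha(\eps))\,\Li_1(\alpha(\eps))^j$ matched against $\sum_j C_j(1)\,T_0^{\,j}$ is just a cleaner packaging of the paper's discussion, not a different method.
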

	
	\begin{Rem}
		Note that this claim depends very strongly on fact that the indices 1 come with the \emph{same} argument \( \alpha(\eps) \).  The asymptotic expansion of
		\[
		\Li_{n_1, \ldots, n_d, \{1\}^k}(x_1, \ldots, x_d, \{1\}^{k-1}, 1-\eps)
		\]
		is instead connected to the shuffle regularization.  For this we refer to Proposition 2.20 in \cite{goncharov} in particular, and Sections 2.9--2.10 in \cite{goncharov} for the broader context.
	\end{Rem}
	
	As an example of the effect that the change of argument \( \alpha(\eps) \) makes to the asymptotic expansion -- something with which we must contend later -- let us consider the distribution relations (cf. Lemma 2.21 \cite{goncharov}).  
	
	\begin{Eg}[Regularized distribution relations]
		The following holds for all \( \eps > 0 \) as every term is convergent
		\[
		\sum_{s_1,s_2,s_3 \in \{ \pm 1 \}} \Li_{n_1,1,1}(s_1, s_2(1-\eps), s_3(1-\eps)) = \frac{1}{2^{n_1-1}} \Li_{n_1,1,1}(1,(1-\eps)^2, (1-\eps)^2) \,.
		\]
		
		On the left-hand side, index 1 comes with argument \( 1-\eps \) in the case of non-convergent MPL's.  We then have \( \Li_1(1-\eps) = -\log\eps \).  The constant term of (the asymptotic expansion of) this is 0.  On the right-hand side however, index 1 comes with argument \( (1-\eps)^2 \), and instead we have
		\[
		\Li_1((1-\eps)^2) = -\log\eps - \log(2-\eps) \,.
		\]
		So the constant term in the asymptotic expansion of 
		\[
		\reg^\asym \Li_1((1-\eps)^2) = -\log2 -\log\eps
		\]
		is \( -\log2 \).
		
		The following regularized version of the distribution relation holds, with different regularization parameters on the left-hand and right-hand sides:
		\[
		\sum_{s_1,s_2,s_3 \in \{ \pm 1 \}} \reg_{T=0}^\ast \Li_{n_1,1,1}(s_1, s_2, s_3) = \frac{1}{2^{n_1-1}} \reg_{T=-\log2}^\ast \Li_{n_1,1,1}(1,1,1) \,.
		\]
	\end{Eg}
	
	\subsection{\texorpdfstring{Asymptotic expansions for zeta and $t$-values}{Asymptotic expansions for zeta and t values}}\label{sec:asymp}
	
	We now focus on multiple zeta values and multiple \( t \)-values for the remainder of the paper.
	
	\begin{Def}[MZV's and M$t$V's]
		The multiple zeta values, respectively multiple \( t \) values, with signs \( \eps_1,\ldots,\eps_m \in \{ z \in \C : \abs{z} = 1 \} \) are defined by
		\begin{align*}
		\zeta\sgnarg{\eps_1,\ldots,\eps_m}{n_1,\ldots,n_m} &\coloneqq \sum_{0 < k_1 < \cdots < k_m} \frac{\eps_1^{k_1} \cdots \eps_m^{k_m}}{k_1^{n_1} \cdots k_m^{n_m}} \,, \\
		t\sgnarg{\eps_1,\ldots,\eps_m}{n_1,\ldots,n_m} &\coloneqq \sum_{0 < k_1 < \cdots < k_m} \frac{\eps_1^{k_1} \cdots \eps_m^{k_m}}{(2k_1-1)^{n_1} \cdots (2k_m-1)^{n_m}} \,.
		\end{align*} 
	\end{Def}
	
	Essentially \( \zeta \) is given by just restricting  \( \Li \) to arguments on the unit circle, for notational emphasis, although \( t \) is a genuinely distinct object.  We will apply a similar prescription for the asymptotic expansion to the following setup involving these MZV's and M$t$V's,
	\[
	t\sgnarg{\exp(2\pi\ii n \phi_1), \ldots, \exp(2\pi\ii\phi_m)}{n_1,\ldots,n_m} \quad\quad \zeta\sgnarg{\exp(2\pi\ii n \phi_1), \ldots, \exp(2\pi\ii\phi_m)}{n_1,\ldots,n_m} \,,
	\]
	where \( \phi_i = \phi_i(\eps) \).  In particular we shall need compute the asymptotic expansions as \( \eps \to 0^+ \) (the direction from above is important!) of
	\[
	f\sgnarg{\exp(2 \pi \ii n \eps)}{1} \,,
	\]
	for \(f = \zeta, t \), as a polynomial in \( \log(1 - e^{2 \pi \ii \eps}) \).  Because we will use only this version of the asymptotic series for the remainder of the paper, there will be no confusion between this and the version in the previous \autoref{sec:asympolylog}; we will therefore also denote this by \( \reg^\asym \), with only a slight abuse of notation.  The same argument as in \autoref{lem:asympviastuff} allows us to relate the constant term of the asymptotic expansion (when each index 1 comes with the same sign \( \exp(2\pi\ii \alpha(\eps)) \)) and the stuffle regularizations of \( \zeta \) and \( t \). Likewise, the asymptotic expansion of an MZV or M$t$V which converges as \( \eps \to 0^- \) in this prescription is just the MZV or M$t$V evaluated at \( \eps = 0 \).  So we just need to deal with the basic depth 1 cases.
	
	\begin{Lem}[Asymptotic expansion of \( t \)]\label{lem:tasymp}
		The asymptotic expansion of \( t\sgnargsm{\exp(2 \pi \ii n \eps)}{1} \), for \( n \neq 0 \in \mathbb{Z} \), is given by
		\[
		\reg^\asym t\sgnarg{\exp(2 \pi \ii n \eps)}{1} = \log\Big( \frac{2}{\sqrt{n}} \Big) - \frac{1}{2} \log(1 - \exp(2 \pi \ii \eps)) \,.
		\]
		
		\begin{proof}
			One can evaluate the following series, for \( \eps \in (0,\frac{1}{n}) \), as
			\begin{align*}
			t\sgnarg{\exp(2 \pi \ii n \eps)}{1} = \sum_{k=1}^\infty \frac{\exp(2\pi \ii n \eps k)}{2k-1} &= \exp(\ii \pi n \eps) \arctanh(\exp(\ii \pi n \eps)) \\
			&= \exp(\ii \pi n \eps) \Big( \frac{1}{2} \log\cot\Big(\frac{\pi n\eps}{2}\Big) + \frac{\ii \pi}{4} \Big) \,.
			\end{align*}
			This is extended by periodicity to \( \R \setminus \frac{1}{n} \Z \).  Note there is a jump discontinuity in the imaginary part at each \( \frac{k}{n} \).  More precisely
			\begin{align*}
			& \lim_{\eps \to 0^+} \Im \exp(\ii \pi n \eps) \Big( \frac{1}{2} \log\cot\Big(\frac{\pi n\eps}{2}\Big) + \frac{\ii \pi}{4} \Big) = \frac{\pi}{4} \\
			& \lim_{\eps \to 0^-} \Im \exp(\ii \pi n \eps) \Big( \frac{1}{2} \log\cot\Big(\frac{\pi n\eps}{2}\Big) + \frac{\ii \pi}{4} \Big) = -\frac{\pi}{4} \,,
			\end{align*}
			where \( \lim_{\eps\to0^-} = \lim_{\eps\to\frac{1}{n}^-} \) returns us to the original range of definition of the series.
			
			We also have
			\[
			\log(1 - \exp(2 \pi\ii \eps)) = \frac{1}{2} \log(2 \cdot (1 - \cos(2\pi \eps))) + \Big( {-}\frac{\pi}{2} + \pi \eps \Big) \ii
			\]
			
			The following limit computations using the above establish the asymptotic series is a linear polynomial, and then compute for us the constant term, giving the claim.  First, assuming \( n > 0 \)
			\begin{align*}
			\lim_{\eps\to0^+} \frac{t\sgnarg{\exp(\pm2 \pi \ii n \eps)}{1}}{\log(1 - \exp(2 \pi \ii \eps)))} &= -\frac{1}{2} \,,
			\end{align*}
			Then
			\begin{align*}
			\lim_{\eps\to0^+} t\sgnarg{\exp(2 \pi \ii n \eps)}{1} + \frac{1}{2} \log(1 - \exp(2 \pi \ii \eps)) &= \log2 - \frac{1}{2} \log n \, \\
			\lim_{\eps\to0^+} t\sgnarg{\exp(-2 \pi \ii n \eps)}{1} + \frac{1}{2} \log(1 - \exp(2 \pi \ii \eps)) &= \log2 - \frac{1}{2} \log n - \frac{\ii\pi}{2} \,.
			\end{align*}
			The two cases \( \pm n \) combine to the expression involving the square root by taking \( \frac{1}{2} \log(-1) = \frac{\ii\pi}{2} \).
		\end{proof}
	\end{Lem}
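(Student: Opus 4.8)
The plan is to reduce the statement to a closed form for the depth-one series and then read off the two coefficients of the degree-one logarithmic asymptotic expansion directly from the recovery formulae recorded after the definition. First I would evaluate the defining series. Writing \( z = \exp(2\pi\ii n\eps) \), I shift the summation index so that \( 2k-1 = 2(k-1)+1 \), pull out one factor of \( z \), and set \( z = w^2 \); comparing with the Taylor series \( \arctanh(w) = \sum_{j\ge0} w^{2j+1}/(2j+1) \) identifies the sum as \( \sqrt{z}\,\arctanh(\sqrt{z}) \). Choosing \( \sqrt{z} = \exp(\ii\pi n\eps) \), valid for small \( \eps>0 \), gives
\[
t\sgnarg{\exp(2\pi\ii n\eps)}{1} = \exp(\ii\pi n\eps)\,\arctanh(\exp(\ii\pi n\eps)) \,.
\]

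Next I would make the divergence explicit by unwinding \( \arctanh \) on the unit circle. Using \( \arctanh(w) = \tfrac12\log\tfrac{1+w}{1-w} \) together with the identity \( \tfrac{1+e^{\ii\theta}}{1-e^{\ii\theta}} = \ii\cot(\theta/2) \), I obtain
\[
\arctanh(\exp(\ii\pi n\eps)) = \tfrac12\log\cot\Big(\tfrac{\pi n\eps}{2}\Big) + \tfrac{\ii\pi}{4} \,,
\]
so that the only source of blow-up as \( \eps\to0^+ \) is the real term \( \tfrac12\log\cot(\pi n\eps/2) \sim -\tfrac12\log\eps \). In parallel I would record the expansion of the comparison function, \( \log(1-\exp(2\pi\ii\eps)) = \tfrac12\log(2(1-\cos2\pi\eps)) + (-\tfrac\pi2+\pi\eps)\ii \), whose real part also grows like \( \log\eps \) while its imaginary part stays bounded.

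With these closed forms in hand, I would extract the expansion in the variable \( L := \log(1-\exp(2\pi\ii\eps)) \), which differs from \( \log\eps \) by a convergent quantity, so the recovery formulae for the coefficients apply verbatim. The leading coefficient is \( a_1 = \lim_{\eps\to0^+} t\sgnargsm{\exp(2\pi\ii n\eps)}{1}/L = -\tfrac12 \), since numerator and denominator both have real parts \( \sim\log\eps \) with bounded imaginary parts, and the prefactor \( \exp(\ii\pi n\eps)\to1 \). The constant term is then \( a_0 = \lim_{\eps\to0^+}\big(t\sgnargsm{\exp(2\pi\ii n\eps)}{1}+\tfrac12 L\big) \). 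Here I would first note that replacing \( \exp(\ii\pi n\eps) \) by \( 1 \) is harmless, since the error is \( O(\eps\log\eps)=o(1) \); matching the two \( \log\eps \) (and \( \log\pi \)) divergences cancels them, the imaginary contributions \( +\pi/4 \) and \( -\pi/4 \) cancel, and the surviving real constant is \( \log2-\tfrac12\log n = \log(2/\sqrt{n}) \).

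I expect the sign-of-\( n \) bookkeeping to be the main obstacle, rather than any individual limit. For \( n>0 \) the argument tends to \( 1 \) from one side, \( \cot(\pi n\eps/2)>0 \), and the real logarithm is unproblematic; for \( n<0 \) the argument \( \exp(2\pi\ii n\eps) \) approaches \( 1 \) from the opposite side, \( \cot \) changes sign, and one must track the jump in the imaginary part at the root of unity together with the correct branch of the logarithm. The clean way to package both cases is to observe that the negative-\( n \) limit carries an extra \( -\ii\pi/2 \), which is exactly \( \tfrac12\log(-1) \); absorbing this into the square root rewrites the constant uniformly as \( \log(2/\sqrt{n}) \) for all \( n\neq0 \). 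Keeping strict control of the one-sided limit \( \eps\to0^+ \), whose direction is essential, is the only genuinely delicate point.
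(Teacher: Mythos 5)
Your proposal is correct and follows essentially the same route as the paper's proof: closed-form evaluation of the series as $\exp(\ii\pi n\eps)\arctanh(\exp(\ii\pi n\eps))$, the rewriting via $\tfrac12\log\cot(\pi n\eps/2)+\tfrac{\ii\pi}{4}$, extraction of the leading coefficient $-\tfrac12$ and the constant term by the recovery limits, and the absorption of the extra $-\ii\pi/2$ for negative $n$ into $\tfrac12\log(-1)$ to obtain the uniform expression $\log(2/\sqrt{n})$. The sign bookkeeping and the one-sided limit $\eps\to0^+$ that you flag as the delicate points are exactly the points the paper also treats with care.
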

	
	\begin{Lem}
		[Asymptotic expansion of \( \zeta \)]\label{lem:zetasymp}
		The asymptotic expansion of \(
		\zeta\!\sgnargsm{\!\exp(2 \pi \ii n \eps)\!}{1} 
		\), for \( n \neq 0 \in \mathbb{Z} \), is given by
		\[
		\reg^\asym \zeta\sgnarg{\exp(2 \pi \ii n \eps)}{1}  = - \log n - \log(1 - \exp(2 \pi \ii \eps)) \,.
		\]
		
		\begin{proof}
			One can evaluate the following series, for \( \eps \in (0,\frac{1}{n}) \), as
			\begin{align*}
			\zeta\sgnarg{\exp(2 \pi \ii n \eps)}{1}  = \sum_{k=1}^\infty \frac{\exp(2\pi \ii n \eps k)}{k} &= -\log(1 - \exp(2\pi \ii n \eps)) \,.
			\end{align*}
			This is extended by periodicity to \( \R \setminus \frac{1}{n} \Z \).  Similar computations to the above give the claimed asymptotic series.  
		\end{proof}
	\end{Lem}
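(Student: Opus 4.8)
The plan is to follow the proof of \autoref{lem:tasymp} essentially verbatim, the simplification being that the depth-one object here has an elementary closed form. First I would sum the defining series directly: since
\[
\zeta\sgnarg{\exp(2\pi\ii n\eps)}{1} = \sum_{k=1}^\infty \frac{\exp(2\pi\ii n\eps k)}{k}
\]
and \( \sum_{k\ge1} x^k/k = -\log(1-x) \) for \( \abs{x}\le1 \), \( x\neq1 \), one obtains the exact identity \( \zeta\sgnargsm{\exp(2\pi\ii n\eps)}{1} = -\log(1-\exp(2\pi\ii n\eps)) \), valid for \( \eps\in(0,\tfrac1n) \) and extended by periodicity to \( \R\setminus\tfrac1n\Z \). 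This replaces the \( \arctanh \) and cotangent computation of the previous lemma by a single principal logarithm, so that only the branch bookkeeping remains to be carried out.

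Next I would invoke the uniqueness characterization of the logarithmic asymptotic expansion recalled after its definition: the expansion is the unique polynomial \( \sum_k a_k\log^k\eps \) whose difference from \( f \) is \( O(\eps^{1-\delta}) \), with coefficients recovered by the nested limits \( a_r=\lim_{t\to0}f(t)/\log^r t \) and \( a_s=\lim_{t\to0}(f(t)-\sum_{k>s}a_k\log^k t)/\log^s t \). From \( 1-\exp(2\pi\ii\eps) = -2\pi\ii\eps + O(\eps^2) \) one sees that \( \log(1-\exp(2\pi\ii\eps)) = \log\eps + \text{const} + o(1) \), so the target right-hand side is a linear polynomial in \( \log\eps \), and it suffices to verify that the expansion has degree \( 1 \) and to pin down its two coefficients. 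The leading coefficient is
\[
\lim_{\eps\to0^+}\frac{-\log(1-\exp(2\pi\ii n\eps))}{\log(1-\exp(2\pi\ii\eps))} = -1 \,,
\]
since in both numerator and denominator the real part \( \sim\log\eps\to-\infty \) dominates the bounded imaginary part; this fixes the coefficient of \( \log(1-\exp(2\pi\ii\eps)) \) as \( -1 \). The constant term is then
\[
\lim_{\eps\to0^+}\Big( {-}\log(1-\exp(2\pi\ii n\eps)) + \log(1-\exp(2\pi\ii\eps)) \Big) = \lim_{\eps\to0^+}\log\frac{1-\exp(2\pi\ii\eps)}{1-\exp(2\pi\ii n\eps)} \,,
\]
and the quotient inside tends to \( \tfrac1n \), giving \( -\log n \).

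The one delicate point -- the analogue of the jump-discontinuity analysis in \autoref{lem:tasymp} -- is the branch of the logarithm in this last limit, and I expect it to be the only real obstacle. For \( n>0 \) both \( 1-\exp(2\pi\ii\eps) \) and \( 1-\exp(2\pi\ii n\eps) \) lie in the lower half-plane as \( \eps\to0^+ \), the quotient approaches the positive real \( \tfrac1n \) continuously on the principal branch, and one gets the real value \( -\log n \). For \( n<0 \) the argument \( \exp(2\pi\ii n\eps) \) instead approaches \( 1 \) from the opposite side, so \( 1-\exp(2\pi\ii n\eps) \) sits in the upper half-plane; tracking the imaginary parts exactly as in the displayed one-sided limits of \( \Im \) for \( t \) shows that the difference of principal logarithms acquires an extra \( -\ii\pi \), which is precisely \( -\log n = -\log\abs{n}-\ii\pi \) under the convention \( \log(-1)=\ii\pi \) used to combine the \( \pm n \) cases in \autoref{lem:tasymp}. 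Confirming that the imaginary parts land on exactly this multiple of \( \pi \), rather than on \( +\ii\pi \), is the crux, and it is settled by the same explicit real/imaginary-part computation already performed for \( t\sgnargsm{\exp(2\pi\ii n\eps)}{1} \).
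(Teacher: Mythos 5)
Your proposal is correct and follows essentially the same route as the paper: the paper likewise evaluates the series in closed form as \( -\log(1-\exp(2\pi\ii n\eps)) \) and then asserts that ``similar computations to the above'' (i.e.\ the leading-coefficient and constant-term limits, with the branch bookkeeping for \( n<0 \)) yield the stated expansion. You have simply written out in full the computations the paper leaves implicit, and your treatment of the \( n<0 \) branch, giving \( -\log n = -\log\abs{n} - \ii\pi \), matches the convention used in \autoref{lem:tasymp}.
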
	
	
	\subsection{Truncated identity}

	We establish an identity here for the generating series of truncated multiple \( t \)-values with signs \(  \exp(2\pi\ii \phi_1), \ldots,  \exp(2\pi\ii \phi_m) \), which through careful analysis of the limit via the above asymptotic series will give the identity necessary for the proof of \autoref{thm:scon}. \medskip
	
	Let \( \phi_1,\ldots,\phi_m \in \mathbb{R} \), and define the following truncated series (cf. Goncharov \cite[\S{}2.6]{goncharov} as \( M \to \infty \), wherein he obtains instead a distribution on the \( m \)-torus).  We use variables \( y_1,\ldots,y_m \) to minimize the otherwise inevitable confusion between variables \( t_i \) and the multiple \( t \)-values themselves.  Let us also write \( \ee(x) \coloneqq \exp(2 \ii \pi x) \), and pre-emptively introduce the notation \( y_{i,j} = y_i - y_j \) for later convenience.
	
	\begin{Def}[$t$-Bernoulli series]\label{def:tbernoulli}
		The \emph{$t$-Bernoulli} series \( B^t_M \) of depth \( m \), truncated to order \( M \) is defined as:
		\begin{equation*}
		B^t_M(\phi_1,\ldots,\phi_m \mid y_1,\ldots,y_m) \coloneqq \sum_{-M \leq k_1 < \cdots < k_m \leq M} \frac{\ee(\phi_1 k_1 + \cdots + \phi_m k_m))}{(2k_1-1 - y_1) \cdots (2k_m-1 - y_m)} \,.
		\end{equation*}
		In the limit, the \emph{$t$-Bernoulli} series \( B^t_M \) of depth \( m \) is given as
		\[
		B^t(\phi_1,\ldots,\phi_m \mid y_1,\ldots,y_m) = \lim_{M\to\infty} B^t_M(\phi_1,\ldots,\phi_m \mid y_1,\ldots,y_m) \,.
		\]
	\end{Def}
	The motivation for this name comes from the fact that in the limit \( M \to \infty \), the depth 1 case can be evaluated via depth 1 multiple \( t \)-values, i.e. partially via Bernoulli numbers, as shown in \autoref{prop:bernoulli_depth1} below.  (See \cite[\S{}2.6]{goncharov} for the corresponding Bernoulli series in the case of MZV's.)
	
	\begin{Def}[Truncated MZV's and M$t$V's]
		The multiple zeta values, respectively multiple \( t \) values, with signs \( \eps_1,\ldots,\eps_m \in \{ z \in \C : \abs{z} = 1 \} \), truncated to order \( M \) are defined by
		\begin{align*}
		\zeta_M\sgnarg{\eps_1,\ldots,\eps_m}{n_1,\ldots,n_m} &\coloneqq \sum_{0 < k_1 < \cdots < k_m \leq M} \frac{\eps_1^{k_1} \cdots \eps_m^{k_m}}{k_1^{n_1} \cdots k_m^{n_m}} \,, \\
		t_M\sgnarg{\eps_1,\ldots,\eps_m}{n_1,\ldots,n_m} &\coloneqq \sum_{0 < k_1 < \cdots < k_m \leq M} \frac{\eps_1^{k_1} \cdots \eps_m^{k_m}}{(2k_1-1)^{n_1} \cdots (2k_m-1)^{n_m}} \,.
		\end{align*} 
	\end{Def}
	
	For \( (n_m,\eps_m) \neq (1,1) \), the both infinite series converge.  Corollary 2.3.10 in \cite{zhao} gives a proof that, for \( \abs{x_i} \leq 1 \), the multiple polylogarithm \( \Li_{n_1,\ldots,n_m}(x_1,\ldots,x_m) \) converges if (and only if) \( (n_m,x_m) \neq (1,1) \), which implies the claim for \( \zeta_M \).  Using the representation
	\[
	t_M\sgnarg{\eps_1,\ldots,\eps_m}{n_1,\ldots,n_m} = \frac{\sqrt{\eps_1} \cdots \sqrt{\eps_m}}{2^m} \sum_{s_1,\ldots,s_m \in \{ \pm1 \}}s_1 s_2\cdots s_m \zeta_{2M}\sgnarg{s_1 \sqrt{\eps_1}, \ldots, s_m \sqrt{\eps_m}}{n_1,\ldots,n_m} \,,
	\]
	we get the result for M$t$V's, as \( (\eps_m, n_m) \neq (1,1) \) implies \( (\pm \sqrt{\eps_m}, n_m) \neq (1,1) \), and so each of the \( 2^m \) MZV sums also converges.
	The (non-truncated) multiple zeta values and multiple \( t \)-values with signs \( \eps_1,\ldots,\eps_m \) are obtained via the limits as \( M \to \infty \), i.e.
	\begin{align*}
	\zeta\sgnarg{\eps_1,\ldots,\eps_m}{n_1,\ldots,n_m} \coloneqq \lim_{M\to\infty} 	\zeta_M\sgnarg{\eps_1,\ldots,\eps_m}{n_1,\ldots,n_m} & \quad\quad
	t\sgnarg{\eps_1,\ldots,\eps_m}{n_1,\ldots,n_m} =\coloneqq \lim_{M\to\infty} 	t_M\sgnarg{\eps_1,\ldots,\eps_m}{n_1,\ldots,n_m} \,.
	\end{align*}
	
	We now assemble the truncated MZV's and M$t$V's of fixed depth \( m \) into a generating series, as follows.
	\begin{Def}[Generating series for truncated MZV's and M$t$V's]
		The generating series of depth \( m \) truncated MZV's, respectively M$t$V's, with \emph{phases} \( \phi_1,\ldots,\phi_m \) (or equivalently, with signs \( \ee(\phi_1) \coloneqq \exp(2\pi\ii \phi_1), \ldots, \ee(\phi_m) \coloneqq \exp(2\pi\ii \phi_m) \)) are defined by
		\begin{align*}
		\Lic_M(\phi_1,\ldots,\phi_m \mid y_1,\ldots,y_m) &\coloneqq \sum_{n_1, \ldots, n_m \geq 1} \zeta_M\sgnarg{\ee(\phi_1),\ldots,\ee(\phi_m)}{n_1,\ldots,n_m} y_1^{n_1-1} \cdots y_m^{n_m-1} \\
		\Tic_M(\phi_1,\ldots,\phi_m \mid y_1,\ldots,y_m) &\coloneqq \sum_{n_1, \ldots, n_m \geq 1} t_M\sgnarg{\ee(\phi_1),\ldots,\ee(\phi_m)}{n_1,\ldots,n_m} y_1^{n_1-1} \cdots y_m^{n_m-1} \,
		\end{align*}
	\end{Def}
	In the case \( m = 1 \), we can evaluate the series \( B^t_M \) as follows.
	\begin{Prop}\label{prop:bernoulli_depth1}
		The following identity holds
		\begin{align*}
		B^t_M(\phi_1 \mid y_1) &\coloneqq \sum_{-M \leq k_1 \leq M} \frac{\ee(\phi_1 k_1)}{2k_1 - 1 - y_1} \\
		&=	\Tic_M(\phi_1 \mid y_1) - \ee(\phi_1) \Tic_{M+1}(-\phi_1 \mid -y_1)
		\end{align*}
		
		\begin{proof}
			Apply the geometric series
			\[
			\frac{1}{2k_1 - 1 - y_1} = \sum_{n=0}^\infty \frac{y_1^n}{(2k_1 - 1)^{n+1}} \,,
			\]
			to obtain
			\begin{align*}
			B^t_M(\phi_1 \mid y_1) &= \sum_{-M \leq k_1 \leq M} \sum_{n=0}^\infty \frac{\ee(k_1 \phi_1)}{(2k_1-1)^{n+1}} y_1^n \\
			&= \sum_{n=0}^\infty \bigg(\sum_{-M \leq k_1 \leq M} \frac{\ee(k_1 \phi_1)}{(2k_1-1)^{n+1}} \bigg) y_1^n
			\end{align*}
			Now observe
			\begin{align*}
			\sum_{-M \leq k_1 \leq M} \frac{\ee(k_1 \phi_1)}{(2k_1-1)^{n+1}} &= \sum_{k_1=1}^M \frac{\ee(k_1 \phi_1)}{(2k_1-1)^{n+1}} + \sum_{k_1=0}^M \frac{\ee(-k_1 \phi_1)}{(-2k_1-1)^{n+1}} \\
			& = \sum_{k_1=1}^M \frac{\ee(k_1 \phi_1)}{(2k_1-1)^{n+1}} - (-1)^n \ee( \phi_1) \sum_{k_1=1}^{M+1} \frac{\ee(-k_1 \phi_1)}{(2k_1-1)^{n+1}} \\
			& = t_M\sgnarg{\ee(\phi_1)}{n+1} - (-1)^n \ee(\phi_1) t_{M+1}\sgnarg{\ee(-\phi_1)}{n+1} \,
			\end{align*}
			which leads to the claimed generating series identity.
		\end{proof}
	\end{Prop}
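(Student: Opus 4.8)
The plan is to turn the identity into a purely formal manipulation of finite sums by expanding the denominator as a geometric series in $y_1$. First I would write
\[
\frac{1}{2k_1 - 1 - y_1} = \sum_{n=0}^\infty \frac{y_1^n}{(2k_1-1)^{n+1}} \,,
\]
so that $B^t_M(\phi_1 \mid y_1) = \sum_{-M \le k_1 \le M} \sum_{n \ge 0} \ee(k_1\phi_1)(2k_1-1)^{-(n+1)} y_1^n$. Since the outer sum over $k_1$ is finite, the two summations may be interchanged with no convergence concerns, and the coefficient of $y_1^n$ is the finite sum $c_n \coloneqq \sum_{-M \le k_1 \le M} \ee(k_1\phi_1)(2k_1-1)^{-(n+1)}$. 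The whole proposition will then follow by identifying each $c_n$ with a combination of ordinary positively-indexed truncated M$t$V coefficients and re-assembling the generating series.

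To compute $c_n$ I would split the range according to the sign of $k_1$. The part $1 \le k_1 \le M$ is immediately $t_M\sgnarg{\ee(\phi_1)}{n+1}$. For the part $-M \le k_1 \le 0$ I substitute $j = -k_1$, giving $\sum_{j=0}^M \ee(-j\phi_1)(-(2j+1))^{-(n+1)}$; writing $(-(2j+1))^{-(n+1)} = (-1)^{n+1}(2j+1)^{-(n+1)}$ and then shifting $k = j+1$ converts $2j+1$ into the standard form $2k-1$ with $k$ running from $1$ to $M+1$, at the cost of a factor $\ee(\phi_1)$ extracted from $\ee(-j\phi_1) = \ee(\phi_1)\ee(-k\phi_1)$. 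This yields
\[
c_n = t_M\sgnarg{\ee(\phi_1)}{n+1} - (-1)^n \ee(\phi_1)\, t_{M+1}\sgnarg{\ee(-\phi_1)}{n+1} \,.
\]

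Finally I would re-sum $\sum_{n\ge0} c_n y_1^n$. Setting $n_1 = n+1$, the first term reassembles directly into $\Tic_M(\phi_1 \mid y_1)$. For the second, the factor $(-1)^n y_1^n = (-y_1)^{n_1-1}$ together with the flipped phase $\ee(-\phi_1)$ and the raised truncation bound $M+1$ is precisely the expansion of $\Tic_{M+1}(-\phi_1 \mid -y_1)$, so the prefactor $\ee(\phi_1)$ produces the claimed second summand. The only delicate step is the reindexing of the negative range, where one must track simultaneously the sign $(-1)^{n+1}$ coming from $2k_1 - 1 < 0$, the shift $k = -k_1 + 1$ that both normalizes the denominator and raises the bound to $M+1$, and the phase $\ee(\phi_1)$ that the shift releases; once these three bookkeeping effects are correctly combined the identification is forced, and everything else is a formal rearrangement of finite sums.
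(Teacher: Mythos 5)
Your proposal is correct and follows essentially the same route as the paper's proof: expand the denominator as a geometric series in $y_1$, interchange the (finite) sum over $k_1$ with the sum over $n$, split the range at $k_1 = 0$, and reindex the nonpositive part via $k = -k_1 + 1$ to produce the factor $-(-1)^n\ee(\phi_1)$ and the truncation bound $M+1$. The sign, phase, and bound bookkeeping in your reindexing all check out, and the reassembly into $\Tic_M(\phi_1\mid y_1) - \ee(\phi_1)\Tic_{M+1}(-\phi_1\mid -y_1)$ is exactly as in the paper.
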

	In particular, we obtain
	\[
	B^t(\phi_1 \mid y_1) = \Tic(\phi_1 \mid y_1) - \ee(\phi_1) \Tic(-\phi_1 \mid -y_1) \,,
	\]
	as \( M \to \infty \) (with \( \phi_1 \in \R \setminus \Z \)), reducing \( B^t(\phi_1 \mid y_1) \) to a generating series of depth 1 M$t$V's with signs \( \ee(\pm \phi_1) \). \medskip
	
	Now we seek to evaluate the \( t \)-Bernoulli series  \( B_M^t(\phi_1,\ldots,\phi_m \mid y_1,\ldots,y_m) \) of depth \( m \) in two different ways, via the MZV and M$t$V generating series introduced above. \medskip
	
	\paragraph{\em Evaluation 1:} Firstly, decompose the indexing set
	\[
	\{ -M \leq k_1 < \cdots < k_m \leq M \} = \bigcup_{j=0}^m \{ -M \leq k_1 < \cdots < k_j \leq 0 < k_{j+1} < \cdots < k_m \leq M \} \,,
	\]
	appearing in the series \( B_M^t(\phi_1,\ldots,\phi_m \mid y_1,\ldots,y_m) \).  We find
	\begin{align*}
	& \sum_{-M \leq \cdots < k_j \leq 0 < k_{j+1} < \cdots  \leq M} \frac{\ee(\phi_1 k_1 + \cdots + \phi_m k_m)}{(2k_1-1 - y_1) \cdots (2k_m - 1 - y_m)} \\
	& = \begin{aligned}[t] \sum_{0 < k_{j+1} < \cdots < k_m  \leq M} & \frac{\ee(\phi_{j+1} k_{j+1} + \cdots + \phi_m k_m)}{(2k_1-1 - y_1) \cdots (2k_m - 1 - y_m)} \cdot {} \\[-1ex]
	&  {} \cdot \sum_{0 < k_j' < \cdots < k_1' \leq M+1} \!\! \frac{\ee(\phi_1 + \cdots + \phi_j) \cdot \ee(-\phi_j k_j' - \cdots - \phi_1 k_1')}{(-2k_j'+1 - y_j) \cdots (-2k_1' + 1 - y_1)} \,,
	\end{aligned}
	\end{align*}
	where \( k_\ell' = 1-k_\ell \), for \( \ell = 1,\ldots,j \).  Notice the truncation bound of the right-hand factor is \( M+1 \) instead of \( M \).
	So directly, we have
	\begin{equation}\label{texp}
	\begin{aligned}[c]
	& B^t_M(\phi_1,\ldots,\phi_m \mid y_1,\ldots,y_m) = \\
	& \sum_{j=0}^m \begin{aligned}[t] (-1)^j \ee(\phi_1 + \cdots + \phi_j)
	\Tic_M(&\phi_{j+1}, \ldots, \phi_m \mid y_{j+1}, \ldots, y_m) \cdot {} \\
	& {} \cdot \Tic_{M+1}(-\phi_j, \ldots, -\phi_1 \mid -y_j, \ldots, -y_1) \,. \end{aligned}
	\end{aligned}
	\end{equation}
	
	\begin{Rem}Note that all terms in \autoref{texp}, except for \( j = 0, m \) are products.  In particular there is no term like
		\[
		\frac{1}{y_i} \Tic_M(\phi_2,\ldots,\phi_m \mid y_2, \ldots, y_m)
		\]
		which could introduce lower depth irreducibles to the result.  This is unlike the corresponding case for the usual polylgoarithms and MZV's as given in \cite[\S{}2.6]{goncharov}.  The lack of such a term for M$t$V's is ultimately the reason for the symmetry without lower depth irreducibles, as given in \autoref{thm:scon}.
	\end{Rem}
	
	\paragraph{\em Evaluation 2:} On the other hand, consider the decomposition (proven in \cite[Lemma 2.8]{goncharov})
	\[
	\frac{1}{(k_1 - y_1) \cdots (k_m - y_m)} = \sum_{j=1}^m \frac{1}{(k_j - y_j) \prod_{i \neq j} (k_{i,j} - y_{i,j})} \,,
	\]
	where \( k_{i,j} = k_i - k_j \), and \( y_{i,j} = y_i - y_j \).  Replacing \( k_i \) by \( 2k_i - 1 \), we obtain
	\begin{align*}
	& B^t_M(\phi_1,\ldots,\phi_m \mid y_1,\ldots,y_m)  = \sum_{j=1}^m \sum_{-M \leq k_1 < \cdots < k_m \leq M} \frac{\ee(\phi_1 k_1 + \cdots + \phi_m k_m)}{(2k_j-1 - y_j) \cdot \prod_{i \neq j} (2k_{i,j} - y_{i,j})}
	\end{align*}
	In the inner sum, where \( j \) is fixed, set \( k'_{i} = k_{i,j} \) for \( i > j \) and \( k'_{i} = -k_{i,j} \) for \( i < j \).  For \( i < j \), we see \( k'_i = k_j - k_i > 0 \), and for \( i > j \), \( k'_i = k_i - k_j > 0 \) also.  Moreover \( k_{j+1}' < k_{j+2}' < \cdots < k_m' \leq M - k_j \) and \( M + k_j \geq k_1' > k_2' > \cdots > k_{j-1}' \), since \( k_m \leq M \) and \( k_1 \geq -M \).  Note also that
	\[
	\phi_1 k_1 + \cdots + \phi_m k_m = \sum_{i < j} -\phi_i k'_i + (\phi_1 + \cdots + \phi_m) \cdot k_j + \sum_{i > j} \phi_i k'_i \,.
	\]
	This means
	\begin{align*}
	& B^t_M(\phi_1,\ldots,\phi_m \mid y_1,\ldots,y_m) = \\
	&	 \sum_{j=1}^m \sum_{-M \leq k_j \leq M} \begin{aligned}[t]  \Bigg( & \sum_{0 < k_{j-1}' < \cdots < k_1' \leq M + k_j}  \!\! \frac{\ee(-\phi_1 k_1' - \cdots - \phi_{j-1} k_{j-1}')}{\prod_{i < j} (- 2k_i' - y_{i,j})} \cdot {} \\[-1ex]
	& \quad {} \cdot \frac{\ee( (\phi_1 + \cdots+ \phi_m) k_j)}{(2k_j - 1 - y_j)} \! \sum_{0 < k_{j+1}' < \cdots < k_m' \leq M - k_j}  \! \frac{\ee(\phi_{j+1} k_{j+1}' + \cdots + \phi_{m} k_{m}')}{\prod_{i > j} (2k_i' - y_{i,j})} \Bigg) \end{aligned} \,.
	\end{align*}
	In terms of the \( \Lic_M \) generating series, we obtain
	\begin{equation}
	\label{pexp}
	\begin{aligned}
	& B^t_M(\phi_1,\ldots,\phi_m \mid y_1,\ldots,y_m) = \\
	& \frac{1}{2^{m-1}} \sum_{j=1}^m (-1)^{j-1} \!\!\!  \begin{aligned}[t] &  \sum_{-M \leq k_j \leq M} \bigg( \Lic_{M + k_j}(-\phi_{j-1}, \ldots, -\phi_1 \mid \tfrac{1}{2} y_{j,j-1} \,, \ldots, \tfrac{1}{2} y_{j,1}) \cdot {} \\
	& {} \cdot \frac{\ee( (\phi_1 + \cdots+ \phi_m) k_j)}{(2k_j - 1 - y_j)} \Lic_{M-k_j}(\phi_{j+1}, \ldots, \phi_m \mid \tfrac{1}{2} y_{j+1,j} \,, \ldots, \tfrac{1}{2} y_{m,j} ) \bigg) \,. \end{aligned}
	\end{aligned}
	\end{equation}\medskip
	
	Overall, by equating the representations \autoref{texp} and
	\autoref{pexp}, we have obtained the following result.
	\begin{Prop}\label{prop:truncgs} With \( y_{i,j} \coloneqq y_i - y_j \), and \( \ee(x) \coloneqq \exp(2\pi\ii x) \), the following generating series identity holds for all \( \phi_1, \ldots, \phi_m \), and all \( M \),
		\begin{align*}
		& \sum_{j=0}^m (-1)^j \ee(\phi_1 + \cdots + \phi_j)
		\begin{aligned}[t]
		\Tic_M(& \phi_{j+1}, \ldots, \phi_m \mid y_{j+1}, \ldots, y_m) \cdot {}  \\
		& {} \cdot \Tic_{M+1}(-\phi_j, \ldots, -\phi_1 \mid -y_j, \ldots, -y_1) 
		\end{aligned} \\
		& = \begin{aligned}[t]
		\frac{1}{2^{m-1}}  \sum_{j=1}^m (-1)^{j-1} \!\!\! & \sum_{-M \leq k_j \leq M} \bigg( \Lic_{M + k_j}(-\phi_{j-1}, \ldots, -\phi_1 \mid \tfrac{1}{2} y_{j,j-1} \,, \ldots, \tfrac{1}{2}y_{j,1}) \cdot {} \\
		& {} \cdot \frac{\ee(( \phi_1 + \cdots+ \phi_m) k_j)}{(2k_j - 1 - y_j)} \Lic_{M-k_j}(\phi_{j+1}, \ldots, \phi_m \mid \tfrac{1}{2} y_{j+1,j} \,, \ldots, \tfrac{1}{2} y_{m,j} ) \bigg) \,.
		\end{aligned}
		\end{align*}
	\end{Prop}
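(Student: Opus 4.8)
The plan is to compute the $t$-Bernoulli series $B^t_M(\phi_1,\ldots,\phi_m \mid y_1,\ldots,y_m)$ of \autoref{def:tbernoulli} in two independent ways and then observe that the two resulting expressions must agree. Both computations have in fact already been carried out in the discussion preceding the statement: the first (\emph{Evaluation 1}) produces \autoref{texp}, and the second (\emph{Evaluation 2}) produces \autoref{pexp}. The proof therefore reduces to equating these two representations of one and the same quantity, valid for every $\phi_1,\ldots,\phi_m$ and every $M$ since no convergence is needed at the truncated level.

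For \emph{Evaluation 1}, I would partition the index set $\{-M \le k_1 < \cdots < k_m \le M\}$ according to the integer $j \in \{0,\ldots,m\}$ for which $k_j \le 0 < k_{j+1}$. On each block the summand factors as a product of a sum over the positive indices $k_{j+1} < \cdots < k_m$ and a sum over $k_1 < \cdots < k_j \le 0$; reflecting the latter via $k_\ell' = 1 - k_\ell$ turns it into an ordered sum of positive indices running up to $M+1$, which accounts both for the asymmetric truncation bound and for the sign $(-1)^j$ together with the phase $\ee(\phi_1 + \cdots + \phi_j)$. Expanding each factor $\frac{1}{2k-1-y}$ as a geometric series, exactly as in \autoref{prop:bernoulli_depth1}, identifies each block with a product of two $\Tic$ generating series, yielding \autoref{texp}.

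For \emph{Evaluation 2}, I would instead apply Goncharov's partial-fraction identity \cite[Lemma 2.8]{goncharov} to split $\prod_i (2k_i-1-y_i)^{-1}$ into a sum over a distinguished index $j$, isolating the single factor $(2k_j-1-y_j)^{-1}$ and grouping the remaining factors by the differences $k_{i,j} = k_i - k_j$. Setting $k_i' = |k_{i,j}|$ makes both the left block ($i < j$) and the right block ($i > j$) into ordered sums of positive indices, with truncation bounds $M + k_j$ and $M - k_j$ respectively; the phase $\phi_1 k_1 + \cdots + \phi_m k_m$ separates accordingly with the central index $k_j$ carrying $\phi_1 + \cdots + \phi_m$. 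Because each remaining factor carries $2k_i'$ rather than $k_i'$, the corresponding $\Lic$ generating series appear with $y$-arguments halved, which is the source of the overall prefactor $2^{-(m-1)}$ and the half-shifted variables $\tfrac{1}{2} y_{i,j}$ in \autoref{pexp}.

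The main obstacle is purely bookkeeping: keeping the truncation bounds, the signs $(-1)^j$ versus $(-1)^{j-1}$, the phase factors, and the factor-of-two rescaling of the $y_{i,j}$ all mutually consistent across the two evaluations; the geometric-series and partial-fraction steps themselves are routine once the two re-indexings are pinned down. Granting these computations, equating \autoref{texp} with \autoref{pexp} gives the claimed generating series identity.
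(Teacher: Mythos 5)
Your proposal is correct and is essentially identical to the paper's own argument: the paper proves \autoref{prop:truncgs} precisely by carrying out \emph{Evaluation 1} (splitting the index set at zero and reflecting via $k_\ell' = 1 - k_\ell$ to get \autoref{texp}) and \emph{Evaluation 2} (Goncharov's partial-fraction decomposition with the substitution $k_i \mapsto 2k_i - 1$ producing the halved variables and the $2^{-(m-1)}$ prefactor, giving \autoref{pexp}), and then equating the two. All the bookkeeping points you flag — the $M+1$ truncation bound, the signs, the phases, and the factor-of-two rescaling — are resolved exactly as you describe.
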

	
	We now wish to take \( \lim_{M\to\infty} \), but this requires some careful analysis to justify the result.  In particular, we should take \( \phi_1,\phi_m \in \R \setminus \Z \), so every MZV and M$t$V appearing in the above generating series \emph{does not} end in the pair \( (\eps, n) = (\exp(2\pi \ii \cdot \text{integer}), 1) = (1,1) \).  That is to say, so that every involved MZV and M$t$V is convergent, although perhaps only conditionally.  We return to the analytic issues momentarily. \medskip
	
	We introduce the generating series of (non-truncated) MZV's and M$t$V's as follows.
	\begin{Def}[Generating series for MZV's and M$t$V's]
		The generating series of all MZV's, respectively M$t$V's, of depth \( m \)  with \emph{phases} \( \phi_1,\ldots,\phi_m \) (or equivalently, with signs \( \ee(\phi_1) \coloneqq \exp(2\pi\ii \phi_1), \ldots, \ee(\phi_m) \coloneqq \exp(2\pi\ii \phi_m) \)) are defined by
		\begin{align*}
		\Lic(\phi_1,\ldots,\phi_m \mid y_1,\ldots,y_m) &= \sum_{n_1,\ldots,n_m\geq1} \zeta\sgnarg{\ee(\phi_1), \ldots, \ee(\phi_m)}{n_1,\ldots,n_m} y_1^{n_1-1} \cdots y_m^{n_m-1} \,. \\
		\Tic(\phi_1,\ldots,\phi_m \mid y_1,\ldots,y_m) &= \sum_{n_1,\ldots,n_m\geq1} t\sgnarg{\ee(\phi_1), \ldots, \ee(\phi_m)}{n_1,\ldots,n_m} y_1^{n_1-1} \cdots y_m^{n_m-1} \,.
		\end{align*}
	\end{Def}
	If \( \phi_m \in \R \setminus \Z \), every MZV in \( \Lic_M(\phi_1,\ldots,\phi_m \mid y_1, \ldots, y_m) \) is convergent, and we have for example:
	\[
	\lim_{M\to\infty}  \Lic_M(\phi_{1},\ldots,\phi_m \mid y_{1}, \ldots,y_m) = \Lic(\phi_{1},\ldots,\phi_m, \mid y_{1}, \ldots, y_m) \,,
	\]
	Likewise for the corresponding (truncated) M$t$V generating series.  \medskip
	
	Using \autoref{prop:specialsumconvergence}, we can rigorously justify passage to the limit \( \lim_{M\to\infty} \).  This proposition states essentially that
	\[
	\lim_{M\to\infty} \sum_{k=1}^M f_{M+k} g_{M-k} s_k = F G S \,.
	\]
	for sufficiently nice sequences \( f_k \to F \), \( g_k \to G \) and convergent series \( s_k \) with \( \sum_{k=1}^\infty s_k = S \).  The truncated MZV's and M$t$V's are sufficiently nice, as per \autoref{prop:zetatail}, so we can apply \autoref{prop:specialsumconvergence} term-by-term to the generating series in \autoref{prop:truncgs} (splitting the bidirectional sum into \( k < 0 \), \( k = 0  \) and \( k > 0 \) as necessary).  We have also from \autoref{prop:bernoulli_depth1} that
	\[
	\sum_{-\infty < k_j < \infty} \frac{\ee(\phi_1 + \cdots+ \phi_m) k_j)}{(2k_j - 1 - y_j)} = B^t(\phi_1 + \cdots + \phi_m \mid y_j)
	\]
	as per the definition of \( B^t(\phi \mid y) \), at least if \( \phi_1 + \cdots + \phi_m \in \mathbb{R} \setminus \mathbb{Z} \).  Therefore we have obtained the following result.
	
	\begin{Thm}\label{thm:symgsrestricted} With \( y_{i,j} \coloneqq y_i - y_j \), and \( \ee(x) \coloneqq \exp(2\pi\ii x) \), the following generating series identity holds for all \( \phi_1, \ldots, \phi_m \), whenever \( \phi_1, \phi_m, \phi_1 + \cdots + \phi_m \in \mathbb{R} \setminus \mathbb{Z} \):
		\begin{align*}
		& \sum_{j=0}^m (-1)^j \ee(\phi_1 + \cdots + \phi_j)
		\begin{aligned}[t]
		\Tic(& \phi_{j+1}, \ldots, \phi_m \mid y_{j+1}, \ldots, y_m) \cdot  \\
		& \cdot \Tic(-\phi_j, \ldots, -\phi_1 \mid -y_j, \ldots, -y_1) 
		\end{aligned} \\
		& = \frac{1}{2^{m-1}} \sum_{j=1}^m (-1)^{j-1}  \begin{aligned}[t]
		& \Lic(-\phi_{j-1}, \ldots, -\phi_1 \mid \tfrac{1}{2} y_{j,j-1} \,, \ldots, \tfrac{1}{2} y_{j,1} ) \cdot {} \\
		&  \cdot  B^t(\phi_1 + \cdots + \phi_m \mid y_j) \cdot \Lic(\phi_{j+1}, \ldots, \phi_m \mid \tfrac{1}{2} y_{j+1,j} \,, \ldots, \tfrac{1}{2} y_{m,j} ) \,.
		\end{aligned}
		\end{align*}
	\end{Thm}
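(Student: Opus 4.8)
The plan is to obtain the identity by passing to the limit \( M \to \infty \) in the exact truncated identity of \autoref{prop:truncgs}. I would treat both sides as formal power series in \( y_1,\ldots,y_m \) and argue coefficient-by-coefficient: for each fixed monomial \( y_1^{n_1-1}\cdots y_m^{n_m-1} \) the coefficient on either side is a finite \( \C \)-linear combination of truncated MZV's and M\( t \)V's of fixed weights, so it suffices to show that each such (scalar) coefficient converges to the matching coefficient of the claimed limit. This scalar reduction is exactly what lets us feed the data into \autoref{prop:specialsumconvergence}. The three hypotheses are precisely what guarantee that the relevant limits exist: \( \phi_m \in \R\setminus\Z \) ensures that every \( \Tic \) or \( \Lic \) factor whose final phase is \( \phi_m \) involves only convergent values, since its trailing sign \( \ee(\phi_m) \neq 1 \) and hence its trailing index-sign pair is never \( (1,1) \); by reversal \( \phi_1 \in \R\setminus\Z \) plays the same role for the factors ending in \( \ee(-\phi_1) \); and \( \phi_1+\cdots+\phi_m \in \R\setminus\Z \) makes the central depth-one series convergent.

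On the left-hand side the passage to the limit is routine. It is a finite sum over \( j = 0,\ldots,m \) of terms \( (-1)^j \ee(\phi_1+\cdots+\phi_j)\,\Tic_M(\cdots)\,\Tic_{M+1}(\cdots) \), each a product of two generating series. Under the hypotheses each factor converges coefficient-wise, with \( \lim_{M\to\infty}\Tic_M = \Tic \) and \( \lim_{M\to\infty}\Tic_{M+1} = \Tic \), so each of the finitely many terms converges to the matching term of the theorem, and hence so does their sum.

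The right-hand side is where the real work lies, and is the main obstacle. For fixed \( j \) the inner sum \( \sum_{-M \le k_j \le M} \) couples the truncation bounds \( M+k_j \) and \( M-k_j \) of the two \( \Lic \) factors to the summation index \( k_j \), so one cannot naively interchange the limit with the sum: when \( k_j \) is close to \( \pm M \), one of the two factors is truncated at a small bound and does not approach its limiting value. This is exactly the situation resolved by \autoref{prop:specialsumconvergence}, whose applicability rests on the tail estimates of \autoref{prop:zetatail} showing the truncated generating series are ``sufficiently nice.'' I would split the bidirectional sum into the ranges \( k_j > 0 \), \( k_j = 0 \), and \( k_j < 0 \). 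On \( k_j > 0 \), writing \( f_{M+k_j} = \Lic_{M+k_j}(-\phi_{j-1},\ldots \mid \tfrac12 y_{j,j-1},\ldots) \), \( g_{M-k_j} = \Lic_{M-k_j}(\phi_{j+1},\ldots \mid \tfrac12 y_{j+1,j},\ldots) \), and \( s_{k_j} = \ee((\phi_1+\cdots+\phi_m)k_j)/(2k_j-1-y_j) \), the term is precisely \( \sum_{k_j \ge 1} f_{M+k_j} g_{M-k_j} s_{k_j} \), which \autoref{prop:specialsumconvergence} sends to \( F\,G\,\sum_{k_j\ge1} s_{k_j} \) with \( F = \Lic(-\phi_{j-1},\ldots) \) and \( G = \Lic(\phi_{j+1},\ldots) \); the range \( k_j < 0 \) is handled symmetrically, and \( k_j = 0 \) trivially.

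Since the limiting factors \( F \) and \( G \) are independent of the sign of \( k_j \), the three pieces recombine to \( \tfrac{1}{2^{m-1}}\sum_{j=1}^m (-1)^{j-1}\,\Lic(-\phi_{j-1},\ldots)\cdot\big(\sum_{-\infty<k_j<\infty} s_{k_j}\big)\cdot\Lic(\phi_{j+1},\ldots) \), and the central bidirectional series \( \sum_{k_j} \ee((\phi_1+\cdots+\phi_m)k_j)/(2k_j-1-y_j) \) is exactly \( B^t(\phi_1+\cdots+\phi_m \mid y_j) \) by the \( M\to\infty \) limit of \autoref{prop:bernoulli_depth1}, which needs \( \phi_1+\cdots+\phi_m \in \R\setminus\Z \). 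Equating the limits of the two sides of \autoref{prop:truncgs} then yields the stated identity.
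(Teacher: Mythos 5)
Your proposal is correct and follows essentially the same route as the paper: pass to the limit \( M \to \infty \) in \autoref{prop:truncgs}, handle the left-hand side by termwise convergence of the finitely many factors, and resolve the coupled truncation bounds on the right-hand side by splitting the bidirectional sum into \( k_j < 0 \), \( k_j = 0 \), \( k_j > 0 \) and applying \autoref{prop:specialsumconvergence} (justified by the tail estimates of \autoref{prop:zetatail}), then identifying the central series with \( B^t \) via \autoref{prop:bernoulli_depth1}. Your explicit accounting of which hypothesis among \( \phi_1, \phi_m, \phi_1+\cdots+\phi_m \in \R\setminus\Z \) guarantees convergence of which factor is a welcome clarification of what the paper leaves implicit.
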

	
	\subsection{Regularization statement}
	
	We turn the above generating series identity, which holds for \( \phi_1,\phi_m,\phi_1 + \cdots + \phi_m \in \mathbb{R}\setminus\mathbb{Z}\), into the corresponding identity for asymptotic series to obtain a result in the case where some \( \phi_i \to 0 \).
	
	Firstly, we establish a relation when changing variables between two different regularization parameters.  Extend the stuffle regularization \( \reg_T = \reg_T^\ast \) by linearity to the coefficients of the generating \( \Lic(\phi_1,\ldots,\phi_r \mid y_1,\ldots,y_r) \) of MZV's (correspondingly M$t$V's), and we have the following.
	
	\begin{Prop}\label{prop:reg}
		The following relation between regularized MZVs (respectively M$t$Vs) holds, where we assume \( \phi_m \neq 0 \):
		\begin{align*}
		&\reg_{T} \Lic(\phi_1, \ldots, \phi_m, \{0\}^\alpha \mid y_1,\ldots,y_m,y_{m+1},\ldots,y_{m+\alpha}) \\
		& = \sum_{i=0}^{\alpha} \reg_{S}  \Lic(\phi_1,\ldots,\phi_m,\{0\}^i \mid y_1,\ldots,y_m,y_{m+1},\ldots,y_{m+i}) \frac{(T-S)^{\alpha-i}}{(\alpha-i)!} \,.
		\end{align*}
		
		\begin{proof}
			On the level of zeta values, this is equivalent to the claim
			\[
			\reg_{T} \zeta\sgnarg{\phi_1, \ldots, \phi_m, \{1\}^\alpha}{n_1,\ldots,n_m, \{1\}^\alpha} = 
			\sum_{i=0}^\alpha \reg_{S} \zeta\sgnarg{\phi_1, \ldots, \phi_m, \{1\}^i}{n_1,\ldots,n_m, \{1\}^i} \frac{(T-S)^{\alpha-i}}{(\alpha-i)!} \,.
			\]
			This was show in \cite[Lemma 2.5]{charltont2212}; the therein proof relies only on the properties of the Hopf algebra, and so holds unchanged for M$t$V's.  A version holding for classical multiple zeta values (with no signs) is essentially given in \cite{ikz}, whose proof is generalized in \cite[Lemma 2.5]{charltont2212}.
		\end{proof}
	\end{Prop}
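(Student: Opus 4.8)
The plan is to reduce the asserted generating-series identity to a coefficientwise scalar statement about regularized MZV's (and, verbatim, M\(t\)V's), and then to deduce that scalar statement from the single-variable polynomial structure of the stuffle algebra. First I would extract coefficients: since \( \reg_T^\ast \) acts on the generating series by regularizing each coefficient, the identity holds precisely when, for every monomial \( y_1^{n_1-1}\cdots y_{m+\alpha}^{n_{m+\alpha}-1} \), the two sides agree. Now the regularization of \( \zeta\sgnarg{\ee(\phi_1),\ldots,\ee(\phi_m),\{1\}^\alpha}{n_1,\ldots,n_{m+\alpha}} \) depends only on its maximal trailing run of \( (1,1) \) entries, i.e. those carrying simultaneously sign \( 1 \) and index \( 1 \). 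Here the hypothesis \( \phi_m\neq0 \) is essential: it forces \( \ee(\phi_m)\neq1 \), so position \( m \) can never enter such a run, and the whole regularizable tail therefore sits inside the \( \{0\}^\alpha \)-block. Extracting the coefficient in which every trailing variable \( y_{m+1},\ldots,y_{m+\alpha} \) carries exponent \( 0 \) reproduces exactly the scalar identity displayed below; for a monomial whose trailing run of index-\(1\) entries within the \( \{0\}^\alpha \)-block has some length \( \beta<\alpha \), the right-hand sum collapses (only the terms with \( i\ge\alpha-\beta \) can contribute, the rest vanishing for degree reasons since \( \Lic(\ldots,\{0\}^i\mid\cdots) \) involves only \( y_1,\ldots,y_{m+i} \)), and after reindexing the identity becomes the same scalar statement with \( \alpha \) replaced by \( \beta \); a monomial whose last index exceeds \( 1 \) is convergent and both sides reduce to its parameter-independent value.

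It thus suffices to prove the scalar identity
\[
\reg_T^\ast \zeta\sgnarg{\phi_1,\ldots,\phi_m,\{1\}^\alpha}{n_1,\ldots,n_m,\{1\}^\alpha}=\sum_{i=0}^\alpha \reg_S^\ast \zeta\sgnarg{\phi_1,\ldots,\phi_m,\{1\}^i}{n_1,\ldots,n_m,\{1\}^i}\frac{(T-S)^{\alpha-i}}{(\alpha-i)!}\,.
\]
By the Malvenuto--Reutenauer theorem recalled in the introduction, \( \QS=\QS^0[z_1] \) is a polynomial algebra in the single generator \( z_1 \) over \( \QS^0 \) for the stuffle product \( * \), and \( \reg_T^\ast \) is exactly the algebra homomorphism that restricts to \( \zeta \) on \( \QS^0 \) and sends \( z_1\mapsto T \). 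Hence for each word \( w \) one has \( \reg_T^\ast(w)=P_w(T) \) for a single polynomial \( P_w\in\R[T] \) whose coefficients are convergent values, and the claim becomes a relation among the \( P_{Zz_1^\alpha}(T) \), where \( Z \) is the convergent word corresponding to the head \( (\phi_1,\ldots,\phi_m\mid n_1,\ldots,n_m) \) and \( z_1 \) denotes a sign-\(1\), index-\(1\) letter. The weights \( \tfrac{(T-S)^{\alpha-i}}{(\alpha-i)!} \) are precisely the Taylor coefficients of \( P_{Zz_1^\alpha} \) about \( T=S \), so the identity is equivalent to \( \frac{d}{dT}P_{Zz_1^\alpha}=P_{Zz_1^{\alpha-1}} \) for all \( \alpha\ge1 \).

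To establish this I would use the stuffle-polynomial derivative \( \partial_{z_1} \), the derivation of \( (\QS,*) \) that vanishes on \( \QS^0 \) and sends \( z_1\mapsto1 \). Since \( \reg_T^\ast \) is evaluation at \( z_1=T \), differentiation in \( T \) commutes with it: \( \frac{d}{dT}\reg_T^\ast(w)=\reg_T^\ast(\partial_{z_1}w) \). The crux is then the purely combinatorial lemma
\[
\partial_{z_1}(Zz_1^\alpha)=Zz_1^{\alpha-1}\qquad(Z\in\QS^0)
\]
for the concatenation words \( Zz_1^\alpha \), which I would prove by induction on \( \alpha \) from the stuffle relation
\[
(Zz_1^{\alpha-1})*z_1=\alpha\,Zz_1^{\alpha}+(\text{terms with fewer than }\alpha\text{ trailing }z_1)\,;
\]
this holds because, in the quasi-shuffle of \( Zz_1^{\alpha-1} \) with one extra \( z_1 \), exactly the \( \alpha \) insertions of the new letter into the trailing block return \( Zz_1^\alpha \), while all merges and all insertions into \( Z \) shorten the trailing run. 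Granting the lemma, \( \frac{d}{dT}\reg_T^\ast(Zz_1^\alpha)=\reg_T^\ast(Zz_1^{\alpha-1}) \), and Taylor expansion about \( T=S \) yields the binomial formula. Every step is internal to \( (\QS,*) \) and uses only that \( \reg_T^\ast \) is an algebra homomorphism with \( z_1\mapsto T \); nothing distinguishes \( \zeta \) from \( t \) or constrains the signs \( \ee(\phi_i) \), so the same argument gives the M\(t\)V case, in agreement with \cite[Lemma 2.5]{charltont2212} and the classical version of \cite{ikz}.

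I expect the main obstacle to be the inductive verification of the key lemma \( \partial_{z_1}(Zz_1^\alpha)=Zz_1^{\alpha-1} \): applying \( \partial_{z_1} \) to the stuffle relation produces a cloud of correction terms coming from merges and from insertions into \( Z \), and one must check that these cancel exactly (this is already visible in the computation of \( \partial_{z_1}(z_nz_1z_1) \), where the corrections conspire to leave \( z_nz_1 \)). The remaining difficulty is organizational rather than conceptual -- keeping track of the two sides' differing numbers of \( y \)-variables and of the length of the trailing \( (1,1) \)-run when passing from the generating series to scalars -- after which the one-variable polynomial structure does all the real work and the passage from MZV's to M\(t\)V's is automatic.
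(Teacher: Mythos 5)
Your proof is correct, but it takes a genuinely different route from the paper, which disposes of the proposition in two lines: it reduces the generating-series identity to the displayed coefficientwise claim and then cites \cite[Lemma 2.5]{charltont2212} (itself a signed generalization of the classical \( T \)-shift lemma of \cite{ikz}), remarking only that the cited proof is purely Hopf-algebraic and so applies verbatim to M\( t \)V's. You instead supply the content of that citation: the reduction of the scalar claim to \( \tfrac{d}{dT}P_{Zz_1^\alpha}=P_{Zz_1^{\alpha-1}} \) via Taylor expansion about \( T=S \), the identification of \( \tfrac{d}{dT}\circ\reg_T^\ast \) with \( \reg_T^\ast\circ\partial_{z_1} \) using the polynomial structure \( \QS=\QS^0[z_1] \), and the combinatorial lemma \( \partial_{z_1}(Zz_1^\alpha)=Zz_1^{\alpha-1} \). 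That lemma is true and your induction does close up: applying \( \partial_{z_1} \) to \( (Zz_1^{\alpha-1})*z_1=\alpha\,Zz_1^\alpha+(\text{corrections}) \) and comparing with the same relation at level \( \alpha-1 \), the corrections at level \( \alpha \) are exactly the insertions/merges into \( Z \) (giving heads still ending in a non-\( z_1 \) letter, with trailing run \( \alpha-1 \)) plus the merged terms \( Zz_1^jz_2z_1^{\alpha-2-j} \), and the induction hypothesis sends this set termwise onto the level-\( (\alpha-1) \) corrections (the \( j=\alpha-2 \) merged term lies in \( \QS^0 \) and is killed by \( \partial_{z_1} \)), leaving \( \alpha\,\partial_{z_1}(Zz_1^\alpha)=\alpha\,Zz_1^{\alpha-1} \). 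Two remarks. First, your treatment of the coefficient extraction is actually more careful than the paper's: the generating-series identity is not literally equivalent to the single displayed scalar claim but requires it for every trailing-run length \( \beta\le\alpha \) and every convergent head, which your reindexing argument (and the cited lemma, which is stated for general convergent heads) correctly supplies. Second, a small point worth making explicit: the polynomial-algebra statement \( \QS=\QS^0[z_1] \) is recalled in the introduction only for the uncolored quasi-symmetric functions, whereas here one needs it for the signed (colored) quasi-shuffle algebra with \( z_1 \) the single divergent letter of sign \( 1 \) and index \( 1 \); this is standard (the only Lyndon word ending in that letter is the letter itself) but should be said, and it is precisely where the hypothesis \( \phi_m\neq0 \) enters, as you note. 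What your approach buys is a self-contained proof independent of \cite{charltont2212}; what the paper's buys is brevity and a uniform appeal to the Hopf-algebra formalism that also covers the M\( t \)V case without re-running the combinatorics.
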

	
	It will be notationally convenient to write \[
	\Lic_{T=t_0}(\phi_1,\ldots,\phi_m \mid y_1,\ldots,y_m) \coloneqq \reg_{T=t_0} \Lic(\phi_1,\ldots,\phi_m \mid y_1,\ldots,y_m) \,,
	\]
	and correspondingly \( \Tic_{T=t_0} \) for \( \reg_{T=t_0} \Tic \).  At this point we have left behind any need for truncated MZV's (where the truncation parameter was always \( M \)), moreover we shall always be writing \( \Lic_{T=t_0} \) to indicate how the regularization parameter is specialised.  Therefore there should be no confusion.  Similarly we shall also write
	\[
	B^t_{T=t_0}(\phi \mid y) \coloneqq \reg_{T=t_0} B^t(\phi \mid y) \,,
	\]
	to denote the regularization with parameter \( T=t_0 \) of the $t$-Bernoulli series (of depth 1) from \autoref{def:tbernoulli} (expressed via \( \Tic \) using \autoref{prop:bernoulli_depth1}). \medskip
	
	Let \(\vec{\phi} = (\phi_i)_{i=1}^m \) be given, and without loss of generality, assume each \( \phi_i \in [0, 1) \).  We split into three cases at this point: \( \vec{\phi}\neq\vec{0}\) and \(\phi_1+\dots+\phi_m\neq 0\);
	\(\vec{\phi}\neq\vec{0}\) and \(\phi_1+\dots+\phi_m=0\);
	and \(\vec{\phi}=\vec{0}\), where \( \vec{0} \) denotes the zero vector of appropriate length. \medskip
	
	\paragraph{{\em Case 1, \(\ \vec{\phi} \neq \vec{0} \) and \( \phi_1 + \cdots + \phi_m \neq 0 \)}:}  Then necessarily we can find some \( \phi_n \neq 0 \) with \( 1 \leq n \leq m \).  We consider the small perturbation \( \phi'_i = \phi_i + c_i \) where 
	\[
	c_i = \begin{cases}
	- \eps & \text{\( 1 \leq i < n \)} \\
	\eps(2n- 1 - m) & \text{\( i = n \)} \\
	\eps & \text{\( n < i \leq m \).}
	\end{cases}
	\]
	That is, with the condition \( \phi'_1 + \cdots + \phi'_m = \phi_1 + \cdots + \phi_m \neq 0 \), and with \( \phi'_1, \phi'_m \neq 0 \).  By choosing \( \eps \) sufficiently small, we land in the case where \( \phi'_1, \phi'_m, \phi'_1 + \ldots + \phi'_m \neq 0 \), with the additional assumption \( \phi_n + c_n \neq 0 \).
	
	From \autoref{thm:symgsrestricted} we obtain the following identity, using the  \( \phi'_i \) parameters,
	\begin{equation}\label{eqn:case1pertgs}
	\begin{aligned}[c]
	& \sum_{j=0}^m (-1)^j \begin{aligned}[t]
	\ee(\phi_1 + \cdots + \phi_j +  (c_1 + \cdots + c_j)) &
	\Tic( \phi_{j+1} + c_{j+1}, \ldots, \phi_m + c_m \mid y_{j+1}, \ldots, y_m) \cdot {} \\
	& {} \cdot \Tic(-\phi_j - c_j, \ldots, -\phi_1 - c_1 \mid -y_j, \ldots, -y_1) 
	\end{aligned} \\
	& = \frac{1}{2^{m-1}} \sum_{j=1}^m \begin{aligned}[t] 
	(-1)^{j-1}  &\Lic(-\phi_{j-1} - c_{j-1}, \ldots, -\phi_1 - c_1 \mid \tfrac{1}{2} y_{j,j-1} \,, \ldots, \tfrac{1}{2} y_{j,1}) \cdot {} \\
	& {} \cdot  B^t(\phi_1 + \cdots + \phi_m \mid y_j) \cdot {} \\
	& {} \cdot \Lic(\phi_{j+1} + c_{j+1} , \ldots, \phi_m + c_m \mid \tfrac{1}{2} y_{j+1,j} \,, \ldots, \tfrac{1}{2} y_{m,j} )  \,.
	\end{aligned}
	\end{aligned}
	\end{equation}
	Now we shall compute the constant term of the asymptotic expansion, using the results from \autoref{sec:asymp} and the general theory from \autoref{sec:asympolylog}.  The key observation is that in each of these generating series, any signs of the form \( \ee(\phi_j + c_j) = \ee(\eps) \) (if \( j > n \) and \( \phi_j = 0 \)), at the end and \( \ee(\phi_i + c_i) = \ee(-\eps) \) (if \( i < n \) and \( \phi_i = 0 \)), at the start are separated by a term with sign \( \ee(\phi_n + c_n) \to \ee(\phi_n) \neq 1 \), as \( \eps\to0\), since \( \phi_n \neq 0 \) by assumption.  The constant term in the asymptotic expansion may then be computed by the corresponding stuffle regularization via \autoref{lem:asympviastuff}.  \medskip
	
	For example, when \( \vec{\phi} = (0, 0, 0, \frac{1}{10}, 0) \), we might encounter (in the perturbed series) the term
	\[
	Z = \zeta\Big(\begin{matrix} \ee(-\eps), & \ee(-\eps), & \ee(-\eps), & \ee(\frac{1}{10} + 2\eps), & \ee(\eps) \\ 1,&2,&1,&1,&1 \end{matrix}\Big) \,.
	\]
	We then have that
	\[
	[\log^0(1-e^{2\pi\ii\eps})] \reg^\asym Z = \reg_{T=0} \zeta\Big(\begin{matrix} 1, & 1, & 1, & \ee(\frac{1}{10}), & 1 \\ 1,&2,&1,&1,&1 \end{matrix}\Big) \,,
	\]
	since \( [\log^0(1-e^{2\pi\ii\eps})] \reg^\asym \zeta\sgnargsm{\ee(\eps)}{1} = 0 \) by \autoref{lem:zetasymp}.  The case of M$t$V's is analogous; we use \autoref{lem:tasymp}, to get \( [\log^0(1-e^{2\pi\ii\eps})] \reg^\asym t\sgnargsm{\ee(\eps)}{1} = \log{2} \) as the regularization parameter.  So applying this regularization prescription to the above generating series identity in \autoref{eqn:case1pertgs} gives, for \( \vec{\phi} \neq \vec{0} \) and \( \phi_1 + \cdots + \phi_m \neq 0 \),
	\begin{equation}\label{eqn:case1final}
	\begin{aligned}
	& \sum_{j=0}^m (-1)^j \ee(\phi_1 + \cdots + \phi_j) 
	\begin{aligned}[t]
	\Tic_{T=\log2}(& \phi_{j+1}, \ldots, \phi_m \mid y_{j+1}, \ldots, y_m) \cdot {} \\
	& {} \cdot \Tic_{T=\log2}(-\phi_j, \ldots, -\phi_1 \mid -y_j, \ldots, -y_1) 
	\end{aligned} \\
	& = 
	\frac{1}{2^{m-1}} \sum_{j=1}^m \begin{aligned}[t] & (-1)^{j-1}  \Lic_{T=0}(-\phi_{j-1}, \ldots, -\phi_1 \mid \tfrac{1}{2} y_{j,j-1} \, , \ldots, \tfrac{1}{2} y_{j,1}) \cdot {} \\
	& {} \cdot  B^t_{T=\log2}(\phi_1 + \cdots + \phi_m \mid y_j)  \Lic_{T=0}(\phi_{j+1}, \ldots, \phi_m \mid \tfrac{1}{2} y_{j+1,j} \,, \ldots, \tfrac{1}{2} y_{m,j} )  \,.
	\end{aligned}
	\end{aligned}
	\end{equation}

	\paragraph{{\em Case 2, \( \vec{\phi} \neq \vec{0} \) and \( \phi_1 + \cdots + \phi_m = 0 \)}:} Then necessarily we can find some \( \phi_n \neq 0 \) with \( 1 \leq n \leq m \).  We consider the small perturbation \( \phi'_i = \phi_i + c_i \) where 
	\[
	c_i = \begin{cases}
	- \eps & \text{\( 1 \leq i < n \)} \\
	\eps(2n - m) & \text{\( i = n \)} \\
	\eps & \text{\( n < i \leq m \)\,.}
	\end{cases}
	\]
	so that \( \phi'_1 + \cdots + \phi'_m = \phi_1 + \cdots + \phi_m + \eps = \eps \neq 0 \), and \( \phi'_1 \neq 0, \phi'_m \neq 0 \).  We note that the constant term of the asymptotic expansion of \( B^t(\eps \mid y) \) is given as follows
	\begin{align*}
	\reg^\asym_0 B^t(\eps \mid y) &= \reg^\asym_0 \big( \Tic(\eps \mid y) - \exp(2\pi \ii \eps) \Tic(-\eps \mid -y) \big) \\
	&= \Tic_{T=\log2}(0 \mid y) - \Tic_{T=\log2 - \frac{i\pi}{2}}(0 \mid -y) \\
	&= \frac{\ii\pi}{2} + B^t_{T=\log2}(0 \mid y) \,,
	\end{align*}
	using \( \reg^\asym_0 t\sgnargsm{\ee(\eps)}{1} = \log{2} \) and \( \reg^\asym_0 t\sgnargsm{\ee(-\eps)}{1} = \log{2} - \frac{\ii\pi}{2} \) from \autoref{lem:tasymp}, and the fact that the only divergent M$t$V in \( \Tic_{T=t_0}(0 \mid y) \) is \( \reg_{T=t_0} t(1) \).  (Alternatively, use \autoref{prop:reg}.) That is to say, we pick up an extra \( \frac{\ii\pi}{2} \) contribution in this case.
	
	From \autoref{thm:symgsrestricted}, we obtain the following identity using the \( \phi_i' \) parameters:
	\begin{align*}
	& \sum_{j=0}^m (-1)^j \ee(\phi_1 + \cdots + \phi_j + (c_1 + \cdots + c_j)\eps)
	\begin{aligned}[t]
	& \Tic( \phi_{j+1} + c_{j+1}, \ldots, \phi_m + c_m \mid y_{j+1}, \ldots, y_m) \cdot {} \\
	& {} \cdot \Tic(-\phi_j - c_j, \ldots, -\phi_1 - c_1 \mid -y_j, \ldots, -y_1) 
	\end{aligned} \\
	& = 
	\frac{1}{2^{m-1}} \sum_{j=1}^m (-1)^{j-1} \begin{aligned}[t] & \Lic(-\phi_{j-1} - c_{j-1}, \ldots, -\phi_1 - c_1 \mid \tfrac{1}{2} y_{j,j-1} \,, \ldots, \tfrac{1}{2}y_{j,1}) \cdot {} \\
	& \cdot  B^t(\eps \mid y_j) \Lic(\phi_{j+1} + c_{j+1}\,, \ldots, \phi_m + c_m \mid \tfrac{1}{2}  y_{j+1,j} \,, \ldots, \tfrac{1}{2} y_{m,j} )  \,.
	\end{aligned}
	\end{align*}
	Upon taking the constant term in the asymptotic expansion, we find the following identity:
	\begin{equation}\label{eqn:case2final}
	\begin{aligned}[c]
	& \sum_{j=0}^m (-1)^j \ee(\phi_1 + \cdots + \phi_j)
	\begin{aligned}[t]
	& \Tic_{T=\log2}( \phi_{j+1}, \ldots, \phi_m \mid y_{j+1}, \ldots, y_m) \cdot  \\
	& \cdot \Tic_{T=\log2}(-\phi_j, \ldots, -\phi_1 \mid -y_j, \ldots, -y_1) 
	\end{aligned} \\
	& = 
	\frac{1}{2^{m-1}} \sum_{j=1}^m \begin{aligned}[t] (-1&)^{j-1} \Lic_{T=0}(-\phi_{j-1}, \ldots, -\phi_1 \mid \tfrac{1}{2} y_{j,j-1} \,, \ldots, \tfrac{1}{2} y_{j,1}) \cdot \\
	&  \cdot  \Big(\frac{\ii\pi}{2} + B^t_{T=\log2}(0 \mid y_j) \! \Big)  \Lic_{T=0}(\phi_{j+1}, \ldots, \phi_m \mid \tfrac{1}{2}  y_{j+1,j} \,, \ldots, \tfrac{1}{2} y_{m,j} ) \,.
	\end{aligned}
	\end{aligned}
	\end{equation}
	With the following lemma, we can eliminate the additional \( \tfrac{\ii\pi}{2} \) term above, and hence obtain that the same identity as in \autoref{eqn:case1final} of Case 1 holds, also when \( \phi_1 + \cdots + \phi_m = 0 \).
	\begin{Lem}\label{lem:shuffle}
		The following holds for all \( \vec{\phi} \neq \vec{0} \) with \( \phi_1 + \ldots + \phi_m = 0 \).
		\begin{equation}\label{eqn:lishuffleasym}
		\sum_{j=1}^m (-1)^{j-1} \begin{aligned}[t] & \Lic_{T=0}(-\phi_{j-1}, \ldots, -\phi_1 \mid \tfrac{1}{2} y_{j,j-1} \, , \ldots, \tfrac{1}{2} y_{j,1}) \\
		& \cdot \Lic_{T=0}(\phi_{j+1}, \ldots, \phi_m \mid \tfrac{1}{2} y_{j+1,j} \,, \ldots, \tfrac{1}{2} y_{m,j} ) = 0
		\end{aligned}
		\end{equation}
		
		\begin{proof}
			We consider a similar small perturbation \( \phi'_i = \phi_i + c_i \), where \( n \) is such that \( \phi_n \neq 0 \) and 
			\[
			c_i = \begin{cases}
			- \eps & \text{\( 1 \leq i < n \)} \\
			\eps(2n - m) & \text{\( i = n \)} \\
			\eps & \text{\( n < i \leq m \)\,,}
			\end{cases}
			\]
			giving \( \vec{\phi}' \) with \( \phi'_1 + \cdots + \phi'_m = 0 \).  Expanding the above claim for \( \vec{\phi}' \), wherein no regularization is necessary, via the \emph{shuffle} product, one can show that the combination
			\begin{align}\label{eqn:lishuffle}
			\sum_{j=1}^m (-1)^{j-1} \begin{aligned}[t] \Lic(& -\phi'_{j-1}, \ldots, -\phi'_1 \mid \tfrac{1}{2} y_{j,j-1} \,, \ldots, \tfrac{1}{2} y_{j,1}) \\
			& \cdot \Lic(\phi'_{j+1}, \ldots, \phi'_m \mid \tfrac{1}{2} y_{j+1,j}\,, \ldots, \tfrac{1}{2} y_{m,j} ) = 0
			\end{aligned}
			\end{align}
			is identically 0. 
			
			To show this, we first translate into a generating series for iterated integrals.  Namely from \cite[Equation (26)]{goncharov}, we have
			\begin{align*}
			& \Lic(\phi_1,\ldots\phi_m \mid y_1,\ldots,y_m) = \\ 
			& (-1)^m \mathcal{I}(\ee(-\phi_1-\cdots-\phi_m), \ee(-\phi_2-\cdots-\phi_m), \cdots, \ee(-\phi_m) \mid y_1,\ldots,y_m) \,,
			\end{align*}
			where 
			\[
			\mathcal{I}(x_1,\ldots,x_m \mid y_1,\ldots, y_m) \coloneqq \sum_{n_1,\ldots,n_m\geq1} I_{n_1,\ldots,n_m}(x_1,\ldots,x_m) y_1^{n_1-1} \cdots y_m^{n_m-1} \,,
			\]
			is the generating series of iterated integrals with fixed depth \( m \), and arguments \( x_1,\ldots,x_m \).  Here \( I_{n_1,\ldots,n_m}(x_1,\cdots,x_m) \coloneqq I(0; x_1, \{0\}^{n_1-1}, \ldots, x_m, \{0\}^{n_m-1}, 1) \), with
			\[
			I(x_0; x_1,\ldots,x_N;x_{N+1}) = \int_{x_0 < t_1 < \cdots < t_N < x_{N+1}} \frac{\dd t_1}{t_1 - x_1} \wedge \cdots \wedge \frac{\dd t_N}{t_N - x_N} \,,
			\]
			the iterated integral over a family of differential forms.
			
			By making the change of power-series variables
			\[
			\mathcal{I}^!(\vec{x} \mid y_1,\ldots,y_m) \coloneqq \mathcal{I}(\vec{x} \mid y_1, y_1+y_2, \ldots, y_1 + y_2 + \cdots + y_m) \,,
			\]
			Theorem 2.9 in \cite{goncharov} shows that \( \mathcal{I}^! \) satisfies the shuffle product formula, namely
			\begin{align*}
			& \mathcal{I}^!(x_1,\ldots,x_m \mid y_1 \ldots, y_m) \mathcal{I}^!(x_{m+1},\ldots,x_{m+\ell} \mid y_{m+1} \,,\ldots,y_{m+\ell})  \\
			&= \sum_{\sigma \in \Sigma_{m,\ell}} \mathcal{I}^!(x_{\sigma(1)}, \ldots, x_{\sigma(m+\ell)} \mid y_{\sigma(1)},\ldots,y_{\sigma(m+\ell)}) \,,
			\end{align*}
			where \( \Sigma_{m,\ell} \) is the set of \( (m,\ell) \)-shuffles of \( \{1,\ldots,m+\ell\} \), i.e. the set of all permutations \( \sigma \) of \( \{1,\ldots,m+\ell\} \), such that \( \sigma(1) < \sigma(2) < \cdots < \sigma(m) \) and \( \sigma(m+1) < \sigma(m+2) < \cdots < \sigma(m+\ell) \).
			
			In particular, with \( y_{i,j} = y_i - y_j \), we find
			\begin{align*}
			& \Lic(-\phi'_{j-1}, \ldots, -\phi'_1 \mid \tfrac{1}{2} y_{j,j-1} \,, \ldots, \tfrac{1}{2} y_{j,1}) \\
			&= \mathcal{I}^!(\ee(\phi'_{j-1} + \cdots + \phi'_1), \ee(\phi'_{j-2} + \cdots + \phi'_1), \ldots, \ee(\phi'_1) \mid \tfrac{1}{2} y_{j,j-1}, \tfrac{1}{2} y_{j-2,j-1} \,, \ldots, \tfrac{1}{2} y_{3,2}, \tfrac{1}{2} y_{2,1}) \,,
			\end{align*}
			and
			\begin{align*}
			& \Lic(\phi'_{j+1}, \ldots, \phi'_m \mid \tfrac{1}{2} y_{j+1,j} \,, \ldots, \tfrac{1}{2} y_{m,j} ) \\
			&= \mathcal{I}^!(\ee(-\phi'_{j+1} - \cdots - \phi'_m), \ee(-\phi'_{j+2} - \cdots - \phi'_m), \ldots, \ee(-\phi'_m) \mid \tfrac{1}{2} y_{j+1,j} \,, \tfrac{1}{2} y_{j+2,j-1} \,, \ldots, \tfrac{1}{2} y_{m, m-1}) \,.
			\end{align*}
			Consider now the variables \( x_i = \ee(-\phi'_{i+1} - \cdots - \phi'_m) \) and \( t_i = \tfrac{1}{2} y_{i+1,i} \).  From the condition \( \phi'_1 + \cdots + \phi'_m = 0 \) it follows that \( 	\ee(\phi'_{j-1} + \ldots + \phi'_1) = x_{m+1-i} \).
			Upon translation of the claimed identity in \autoref{eqn:lishuffle} into these coordinates we find it is equivalent to
			\[
			\sum_{j=1}^m (-1)^{j-1} \mathcal{I}^!(x_1,\ldots,x_j \mid t_1,\ldots,t_j) \mathcal{I}^!(x_m,\ldots,x_{j+1} \mid t_m \ldots, t_{j+1}) = 0 \,.
			\]
			But this identity holds in any shuffle algebra (compare, e.g., \cite[Ex. (29)]{galois}).  Then \autoref{eqn:lishuffleasym} follows by taking the constant term in the asymptotic expansion, which (by the previous discussion) is equivalent to taking the stuffle regularization with \( T = 0 \), and so completes the proof.
		\end{proof}
	\end{Lem}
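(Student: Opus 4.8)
The plan is to recognise the alternating sum in \autoref{eqn:lishuffleasym} as an instance of the antipode identity \( \id * S = 0 \) in the shuffle Hopf algebra, transported to the level of generating series of iterated integrals. First I would remove the regularization by a perturbation argument: since \( \vec\phi \neq \vec{0} \), pick \( n \) with \( \phi_n \neq 0 \) and replace \( \phi_i \) by \( \phi_i' = \phi_i + c_i \) using the same displacements \( c_i \) as in Case 2, which preserve \( \phi_1' + \cdots + \phi_m' = 0 \) while forcing \( \phi_1', \phi_m' \neq 0 \). Because the trailing phase of the first factor of every summand is \( -\phi_1' \) and that of the second factor is \( \phi_m' \), each coefficient appearing is then a (conditionally) convergent MZV and no regularization is needed; it therefore suffices to prove the unregularized identity \autoref{eqn:lishuffle} for the perturbed parameters, after which \autoref{eqn:lishuffleasym} is recovered by extracting the constant term of the asymptotic expansion, which by \autoref{lem:asympviastuff} is exactly the stuffle regularization at \( T = 0 \).

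To establish \autoref{eqn:lishuffle} I would pass from the \( \Lic \) generating series to the iterated-integral generating series via Goncharov's identity \cite[Equation~(26)]{goncharov}, and then to the modified series \( \mathcal{I}^! \) obtained by the substitution \( y_j \mapsto y_1 + \cdots + y_j \); it is \( \mathcal{I}^! \), rather than \( \mathcal{I} \) itself, that satisfies the shuffle product \cite[Theorem~2.9]{goncharov}, so that the \emph{ordinary} product of two \( \mathcal{I}^! \)-series equals a sum over shuffles. Setting \( x_i = \ee(-\phi_{i+1}' - \cdots - \phi_m') \) and \( t_i = \tfrac12 y_{i+1,i} \), the crucial bookkeeping step is to check that, thanks to the constraint \( \phi_1' + \cdots + \phi_m' = 0 \), the two factors of the \( j \)-th summand assemble into one common word: the reversed initial block becomes \( \mathcal{I}^!(x_1,\ldots,x_j \mid \cdots) \) and the trailing block becomes \( \mathcal{I}^!(x_m,\ldots,x_{j+1} \mid \cdots) \). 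After matching the power-series variables, the left-hand side collapses to the purely algebraic expression \( \sum_{j=1}^m (-1)^{j-1}\,\mathcal{I}^!(x_1,\ldots,x_j)\,\mathcal{I}^!(x_m,\ldots,x_{j+1}) \).

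Finally I would invoke the fact that such an alternating sum of a prefix shuffled with a reversed suffix vanishes identically in any shuffle algebra — this is precisely the antipode axiom for the shuffle Hopf algebra, where the antipode is signed word-reversal, and it can be cited from \cite[Ex.~(29)]{galois}. I expect the main obstacle to lie in the middle paragraph: carefully tracking how the two index reversals, the phase shifts \( \ee(-\phi_{i+1}' - \cdots - \phi_m') \), and the variable change to the \( t_i \) interact, and in particular verifying that the hypothesis \( \phi_1' + \cdots + \phi_m' = 0 \) is exactly what makes \( \ee(\phi_{j-1}' + \cdots + \phi_1') \) coincide with the appropriate \( x_i \), so that the reversed prefix and the suffix are two complementary pieces of a single word rather than arguments of two unrelated integrals. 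The remaining ingredients — the perturbation, the translation to \( \mathcal{I}^! \), and the asymptotic extraction — are routine given the machinery already assembled.
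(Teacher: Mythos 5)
Your proposal is correct and follows essentially the same route as the paper's proof: the same perturbation $\phi_i' = \phi_i + c_i$ preserving the zero-sum condition, the translation to the iterated-integral series $\mathcal{I}^!$ via Goncharov's Equation (26) and Theorem 2.9, the same coordinates $x_i = \ee(-\phi_{i+1}' - \cdots - \phi_m')$, $t_i = \tfrac12 y_{i+1,i}$ exploiting $\phi_1'+\cdots+\phi_m'=0$, the reduction to the shuffle-antipode identity from \cite[Ex.~(29)]{galois}, and the final asymptotic extraction via \autoref{lem:asympviastuff}. No gaps.
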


	\paragraph{{\em Case 3: \( \vec{\phi}=\vec{0} \)}.} 
	In this case, it is no longer possible to guarantee the indices 1 with phase \( \eps \) are separated from the indices 1 with phase \( -\eps \), since there is no non-zero phase at which to split.  That is, we would encounter  say \( \zeta\sgnargsm{\ee(-\eps), \ee(\eps), \ee(\eps)}{1,1,1} \) whose asymptotic series is not obtained by the stuffle regularization directly, as we separately regularize \( \reg_0^\asym \zeta\sgnargsm{\ee(-\eps)}{1} = -\frac{i\pi}{2} \), and \( \reg_0^\asym \zeta\sgnargsm{\exp(\eps)}{1} = 0 \), giving a `hybrid' form.
	
	Instead we take the small perturbation \( \phi'_i = \eps \).  From \autoref{thm:symgsrestricted} and taking the constant term in the asymptotic series (using \autoref{lem:tasymp} and \autoref{lem:zetasymp}), we obtain
	\begin{equation}\label{eqn:gscase3}
	\begin{aligned}
	& \sum_{j=0}^m (-1)^j 
	\begin{aligned}[t]
	\Tic_{T = \log2}(& \{0\}^{m-j} \mid y_{j+1}, \ldots, y_m) \Tic_{T = \log2 - \frac{\pi \ii}{2}}(\{0\}^j \mid -y_j, \ldots, -y_1) 
	\end{aligned} \\
	& =
	\frac{1}{2^{m-1}} \sum_{j=1}^m (-1)^{j-1}  \begin{aligned}[t] & \Lic_{T=-\ii\pi}(\{0\}^{j-1} \mid \tfrac{1}{2} y_{j,j-1} \, , \ldots, \tfrac{1}{2}y_{j,1}) \\
	& \cdot \Big( \frac{\ii\pi}{2} + B^t_{T=\log2}(0 \mid y_j) \Big) \Lic_{T=0}(\{0\}^{m-j}  \mid \tfrac{1}{2} y_{j+1,j} \,, \ldots, \tfrac{1}{2} y_{m,j} )   \,.
	\end{aligned}
	\end{aligned}
	\end{equation}
	We will now apply induction to show that all of the regularizations above can be replaced by the standard ones \( \Lic_{T=0} \) and \( \Tic_{T=\log2} \), up to an additional constant.  In particular, we make the following claim.
	\begin{Lem}\label{clm:id}
		The following identity holds
		\begin{equation}\label{eqn:claim}
		\begin{aligned}
		& \sum_{j=0}^m (-1)^j 
		\begin{aligned}[t]
		\Tic_{T = \log2}(& \{0\}^{m-j} \mid y_{j+1}, \ldots, y_m) \Tic_{T = \log2}(\{0\}^j \mid -y_j, \ldots, -y_1) 
		\end{aligned} \\
		& - 
		\frac{1}{2^{m-1}} \sum_{j=1}^m (-1)^{j-1} \begin{aligned}[t] & \Lic_{T=0}(\{0\}^{j-1} \mid \tfrac{1}{2} y_{j,j-1} \,, \ldots, \tfrac{1}{2} y_{j,1}) \\
		& \cdot B^t_{T=\log2}(0 \mid y_j)  \Lic_{T=0}(\{0\}^{m-j}  \mid \tfrac{1}{2} y_{j+1,j}\,, \ldots, \tfrac{1}{2} y_{m,j} )   \\[2ex]
		& = \begin{cases}
		\displaystyle\frac{1}{m!} \bigg(\displaystyle\frac{\ii\pi}{2} \bigg)^m & \text{\( m \) even} \\[1ex]
		\quad 0 & \text{\( m \) odd.}
		\end{cases}
		\end{aligned}
		\end{aligned}
		\end{equation}
	\end{Lem}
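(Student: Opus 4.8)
The plan is to deduce \eqref{eqn:claim} from the identity \eqref{eqn:gscase3}, which already holds, by converting every non-standard regularization parameter there into the standard choices $\reg_{T=\log2}$ (for $\Tic$) and $\reg_{T=0}$ (for $\Lic$), and to organize the resulting corrections by induction on the depth $m$, as announced before the statement. Write $\Phi_m(y_1,\ldots,y_m)$ for the left-hand side of \eqref{eqn:claim}. The base cases are immediate: for $m=0$ both sides equal $1$, and for $m=1$ one has $\Phi_1 = \Tic_{T=\log2}(0\mid y_1) - \Tic_{T=\log2}(0\mid -y_1) - B^t_{T=\log2}(0\mid y_1) = 0$, since $B^t_{T=\log2}(0\mid y_1)=\Tic_{T=\log2}(0\mid y_1)-\Tic_{T=\log2}(0\mid -y_1)$ by \autoref{prop:bernoulli_depth1}.

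For the inductive step I would apply \autoref{prop:reg} (in the pure zero-phase case, where it is the classical regularization relation) to each non-standard factor in \eqref{eqn:gscase3}. Peeling trailing zero-phase entries gives
\[
\Tic_{T=\log2-\frac{\pi\ii}{2}}(\{0\}^j\mid -y_j,\ldots,-y_1)=\sum_{i=0}^{j}\frac{(-\tfrac{\pi\ii}{2})^{j-i}}{(j-i)!}\,\Tic_{T=\log2}(\{0\}^i\mid -y_j,\ldots,-y_{j-i+1})
\]
and likewise
\[
\Lic_{T=-\ii\pi}(\{0\}^{j-1}\mid\tfrac12 y_{j,j-1},\ldots,\tfrac12 y_{j,1})=\sum_{i=0}^{j-1}\frac{(-\ii\pi)^{(j-1)-i}}{((j-1)-i)!}\,\Lic_{T=0}(\{0\}^i\mid\tfrac12 y_{j,j-1},\ldots,\tfrac12 y_{j,j-i}).
\]
In each expansion the top term reproduces exactly the corresponding factor appearing in $\Phi_m$, while the terms of lower index carry an explicit scalar and have strictly smaller depth; the explicit $\frac{\ii\pi}{2}$ sitting next to $B^t_{T=\log2}(0\mid y_j)$ in \eqref{eqn:gscase3} is a third source of such scalar-times-lower-depth contributions.

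Substituting these expansions into the equality of the two sides of \eqref{eqn:gscase3} and isolating the top terms reproduces $\Phi_m$, so that $\Phi_m$ equals (up to sign) the sum of all the leftover contributions, each of which is a scalar times a product of standard-regularized generating series of total depth $<m$. The next step is to reindex this remainder, matching the surviving blocks of $y$-variables to the shapes occurring in $\Phi_{m'}$ for $m'<m$, so that it reassembles into a weighted combination $\sum_{m'<m}(\text{scalar})\cdot\Phi_{m'}$ together with wholly explicit constant pieces produced by the $\frac{\ii\pi}{2}$ terms. By the inductive hypothesis each $\Phi_{m'}=c_{m'}$ is constant, so this simultaneously shows $\Phi_m$ is constant and reduces the claim to a numerical identity among the scalars $-\tfrac{\pi\ii}{2}$, $-\ii\pi$, $\tfrac{\ii\pi}{2}$ and the prefactor $2^{-(m-1)}$; the target value is most compactly packaged as $\sum_{m\geq0} c_m x^m=\cos(\tfrac{\pi x}{2})$, i.e. $c_m=\frac{1}{m!}(\tfrac{\ii\pi}{2})^m$ for $m$ even and $0$ for $m$ odd.

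I expect the reorganization of the remainder into lower instances of $\Phi$ to be the main obstacle. One must simultaneously track which $y$-variables survive the peeling in each of the three expansions and how the nested index sums recombine, and then verify that the scalar coefficients assemble correctly. The crucial arithmetic is that the $\Lic$-shift $-\ii\pi$ is twice the $\Tic$-shift $-\tfrac{\pi\ii}{2}$; this factor of two must cancel against the prefactor $2^{-(m-1)}$ and the halved arguments $\tfrac12 y_{i,j}$ on the $\Lic$-side for the corrections to collapse to the stated constant. As an independent consistency check, the reversal $y_i\mapsto -y_{m+1-i}$ sends $\Phi_m\mapsto (-1)^m\Phi_m$ — using $B^t_{T=\log2}(0\mid -y)=-B^t_{T=\log2}(0\mid y)$ and the evident relabelling of the two sums — which matches the vanishing of $c_m$ in odd depth.
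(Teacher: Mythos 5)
Your overall strategy---induction on \( m \), using \autoref{prop:reg} to peel the non-standard regularization parameters off each factor of \autoref{eqn:gscase3} and reorganizing the lower-order terms into lower-depth instances of the claim---is the same as the paper's, and your expansions of \( \Tic_{T=\log2-\pi\ii/2} \) and \( \Lic_{T=-\ii\pi} \) are correct. The gap is your assertion that the contributions produced by the explicit \( \frac{\ii\pi}{2} \) sitting next to \( B^t_{T=\log2}(0\mid y_j) \) are ``wholly explicit constant pieces''. They are not, a priori: at each level \( m'=m-\ell \) of the reorganized sum this term contributes \( \frac{\ii\pi}{2} \) times the alternating combination
\[
\Sigma_{m'} \coloneqq \sum_{j=1}^{m'}(-1)^{j-1}\,\Lic_{T=0}(\{0\}^{j-1}\mid \cdots)\,\Lic_{T=0}(\{0\}^{m'-j}\mid\cdots)\,,
\]
a regularized shuffle-antipode expression in the general variables \( \tfrac12 y_{i,j} \), not a number you can quote. (Its analogue at non-degenerate phases vanishes---that is \autoref{lem:shuffle}---but at \( \vec{\phi}=\vec{0} \) the regularized value is the \emph{nonzero} constant \( \frac{(\ii\pi)^{m'-1}}{m'!} \) in odd depth, and this itself requires proof.) Since your induction hypothesis only records \( \Phi_{m'}=c_{m'} \), the remainder does not close up into quantities you control, and the induction stalls.

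The fix, which is what the paper does, is a simultaneous induction: carry the evaluation of \( \Sigma_{m'} \) for \( m'<m \) (the paper's \autoref{eqn:shuffleclaim}) as a second hypothesis, with base cases \( m'=1,2 \) checked by hand---note you also omit the \( m=2 \) base case of the main claim. The reorganized identity at depth \( m \) then determines the pair \( (\Phi_m,\Sigma_m) \) simultaneously: both are real power series, the \( \Sigma_m \)-term enters with the purely imaginary coefficient \( -\tfrac{1}{2^{m-1}}\cdot\tfrac{\ii\pi}{2} \), and the total evaluates via the binomial theorem to \( \frac{(-\ii\pi/2)^m}{m!} \); separating real and imaginary parts (according to the parity of \( m \)) yields both \autoref{eqn:claim} and \autoref{eqn:shuffleclaim} at depth \( m \). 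With that addition your argument goes through; your parity check via \( y_i\mapsto -y_{m+1-i} \) is a valid sanity check but does not substitute for the missing evaluation of \( \Sigma_{m'} \).
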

	
	\begin{proof} We prove this by induction on \( m \), first establishing the base cases. 
		
		\medskip
		\paragraph{{\em Case \( m = 1\)}:} In the case \( m = 1 \), we obtain from \autoref{eqn:gscase3} that
		\[
		\Lic_{T=\log2}(0\mid y_1) - \Lic_{T=\log2 - \frac{i\pi}{2}}(0 \mid -y_1)
		- \Big(\frac{\ii\pi}{2} + B^t_{T=\log2}(0 \mid y_1)\Big) = 0
		\]
		Expanding using the regularization change of variables in \autoref{prop:reg}, we obtain the case \( m = 1 \) of this lemma, as the terms \( \pm \frac{\ii\pi}{2} \) cancel.  
		
		\medskip
		\paragraph{{\em Case \( m = 2\)}:}  Likewise, when \( m = 2 \), we obtain from \autoref{eqn:gscase3} that
		\begin{align*}
		&\Lic_{T=\log2}(0,0 \mid y_1, y_2) - \Lic_{T=\log2}(0 \mid y_2) \Lic_{T=\log2 -\frac{i\pi}{2}}(0 \mid -y_1) + \Lic_{T=\log2 - \frac{\ii\pi}{2}}(0, 0 \mid -y_2, -y_1) \\
		& - \frac{1}{2}\Big( \frac{\ii\pi}{2} + B^t_{T=\log2}(0 \mid y_1)\!\Big)\Lic_{T=0}(0 \mid \tfrac{1}{2} y_{2,1})
		+ \frac{1}{2} \Lic_{T=-\ii\pi}(0 \mid \tfrac{1}{2} y_{2,1}) \Big( \frac{\ii\pi}{2} + B^t_{T=\log2}(0 \mid y_2) \! \Big)   = 0 .
		\end{align*}
		Expanding this out with the regularization change of variables in  \autoref{prop:reg}, we find
		\begin{align*}
		& \Tic_{T=\log2}(0, 0 \mid y_1, y_2) - \Tic_{T=\log2}(0 \mid -y_1) \Tic_{T=\log2}(0 \mid y_2) + \Tic_{T=\log2}(0, 0, \mid -y_2, -y_1) \\
		& - \frac{1}{2} \big( \Lic_{T=0}(0 \mid \tfrac{1}{2} y_{2,1}) B_{T=\log2}^t(0 \mid y_1) - \Lic_{T=0}(0 \mid \tfrac{1}{2} y_{2,1}) B_{T=\log2}^t(0 \mid y_2) \big) \\
		& + \frac{\ii\pi}{2} \Big(
		\Tic_{T=\log2}(0 \mid y_2) - \Tic_{T=\log2}(0 \mid -y_2) - B^t_{T=\log2}(0 \mid y_2)
		\Big) - \frac{1}{2!} \Big(\frac{\ii\pi}{2}\Big)^2 = 0
		\end{align*}
		The first term on line 3 is just case \( m = 1 \) of \autoref{clm:id}, for power series variables \( (y_2) \), which we already showed is equal to 0.  We therefore obtain the case \( m = 2 \) of this lemma, including the claimed constant.
		
		\medskip
		\paragraph{{\em Induction hypotheses}:} 	Now we assume the following identities hold for depth \( < m \), namely \autoref{eqn:claim}, and the following shuffle product formula:
		\begin{equation}\label{eqn:shuffleclaim}
		\begin{aligned}
		\sum_{j=1}^m (-1)^{j-1}   \begin{aligned}[t] & \Lic_{T=0}(\{0\}^{j-1} \mid \tfrac{1}{2} y_{j,j-1} \, , \ldots, \tfrac{1}{2}y_{j,1}) \\[-2ex]
		& \cdot \Lic_{T=0}(\{0\}^{m-j} \mid \tfrac{1}{2} y_{j+1,j}\,, \ldots, \tfrac{1}{2} y_{m,j} ) = \begin{cases}
		\displaystyle\frac{(\ii \pi)^{m-1}}{m!} & \text{\( m \) odd,} \\
		\quad 0 & \text{otherwise.}
		\end{cases}
		\end{aligned}
		\end{aligned}
		\end{equation}
		The shuffle product identity is quickly verified for \( m = 1 \), as series on the left hand side are `depth 0', and so the left hand side is trivially 1.  Whereas for \( m = 2 \), the left hand side is
		\[
		\Lic_{T=0}(0 \mid \tfrac{1}{2} y_{2,1}) + (-1) \Lic_{T=0}(0 \mid \tfrac{1}{2} y_{2,1}) = 0  \,,
		\]
		and so trivially vanishes.  We may proceed to the case \( m \geq 3 \).
		
		\medskip
		\paragraph{{\em Case \( m \geq 3\)}:} 	By \autoref{prop:reg}, we have the following change of regularization parameter results,
		\begin{align*}
		\Tic_{T=\log2 - \tfrac{\ii\pi}{2}}(\{0\}^j \mid -y_j,\ldots, -y_1) &= \sum_{\ell=0}^j \Tic_{T=\log2}(\{0\}^{j-\ell} \mid -y_j,\ldots,-y_{\ell+1}) \cdot \frac{(-\ii\pi/2)^\ell}{\ell!} \,, \\
		\Lic_{T=-\ii\pi}(\{0\}^{j-1} \mid \tfrac{1}{2} y_{j,j-1} \,, \ldots, \tfrac{1}{2} y_{j,1}) &= \sum_{\ell=0}^{j-1} \Lic_{T=0}(\{0\}^{j-1-\ell} \mid \tfrac{1}{2} y_{j,j-1}\,, \ldots, \tfrac{1}{2} y_{j,\ell+1}) \cdot \frac{(-\ii\pi)^{\ell}}{\ell!}  \,.
		\end{align*}
		Now consider the depth \( m \) case of \autoref{eqn:gscase3}, and with the above change of regularizations; we find the following combination is 0,
		\begin{align*}
		& \sum_{j=0}^m (-1)^j 
		\begin{aligned}[t]
		\Tic_{T = \log2}(& \{0\}^{m-j} \mid y_{j+1}, \ldots, y_m) \cdot \bigg(\!\sum_{\ell=0}^j \Tic_{T=\log2}(\{0\}^{j-\ell} \mid -y_j,\ldots,-y_{\ell+1}) \cdot \frac{(-\ii\pi/2)^\ell}{\ell!} \bigg)
		\end{aligned} \\[-1ex]
		& - \begin{aligned}[t]
		\frac{1}{2^{m-1}} \sum_{j=1}^m  (-1)^{j-1}  \Big( \sum_{\ell=0}^{j-1} & \Lic_{T=0}(\{0\}^{j-1-\ell} \mid \tfrac{1}{2} y_{j,j-1}, \tfrac{1}{2} y_{j,j-2}\,,\ldots, \tfrac{1}{2} y_{j,\ell+1}) \cdot \frac{(-\ii\pi)^{\ell}}{\ell!}  \Big) \\[-2ex]
		& \cdot  \Big( \frac{\ii\pi}{2} + B^t_{T=\log2}(0 \mid y_j) \Big) \Lic_{T=0}(\{0\}^{m-j}  \mid \tfrac{1}{2} y_{j+1,j}, \ldots, \tfrac{1}{2} y_{m,j} )  = 0 \,.
		\end{aligned}
		\end{align*}
		Now switch the summation order of \( j \) and \( m \) in both (double) sums, and re-index both \( j \) sums with \( j \mapsto j - \ell \) (keeping the same variable for notational ease with \( y \)-subscripts).  We find (simplifying some powers at the same time)
		\begin{align*}
		& \sum_{\ell = 0}^m \frac{(\ii\pi/2)^\ell}{\ell!} \bigg( \sum_{j = 0}^{m-\ell} (-1)^{j}  
		\begin{aligned}[t]
		\Tic_{T = \log2}(& \{0\}^{m-j-\ell} \mid y_{j+\ell+1}, \ldots, y_m) \Tic_{T=\log2}(\{0\}^{j} \mid -y_{j+\ell},\ldots,-y_{\ell+1}) \! \bigg)
		\end{aligned} \\
		& - 
		\sum_{\ell = 0}^{m-1}  \frac{(\ii\pi/2)^{\ell} }{\ell!} \frac{1}{2^{m-\ell-1}} \sum_{j = 1}^{m-\ell} \begin{aligned}[t] (-1)^{j-1}  \Lic_{T=0}( &\{0\}^{j-1} \mid \tfrac{1}{2} y_{j+\ell,j+\ell-1}, \tfrac{1}{2} y_{j+\ell,j+\ell-2},\ldots, \tfrac{1}{2} y_{j+\ell,\ell+1}) \cdot  \\
		& \cdot  \Big( \frac{\ii\pi}{2} + B^t_{T=\log2}(0 \mid y_{j+\ell}) \Big) \cdot \\
		& \cdot \Lic_{T=0}(\{0\}^{m-j-\ell}  \mid \tfrac{1}{2} y_{j+\ell+1,j+\ell} \,, \ldots, \tfrac{1}{2} y_{m,j+\ell} )  = 0 \,.
		\end{aligned}
		\end{align*}
		We notice that the \( \ell = m \) term of the first sum contributes only \( \frac{(\ii\pi/2)^m}{m!} \), as the \( \Tic_{T=\log2} \) series have `depth 0' and are simply 1.  Whereas, the \( \ell = 0 \) terms are the combination we wish to evaluate to prove \autoref{eqn:claim} in the case \( m \).  The remaining terms evaluate by the induction hypothesis; in particular we find:
		\begin{align*}
		&  \sum_{j = 0}^{m} (-1)^{j} 
		\Tic_{T = \log2}( \{0\}^{m-j} \mid y_{j+1}, \ldots, y_m) \Tic_{T=\log2}(\{0\}^{j} \mid -y_{j},\ldots,-y_{1}) \\[-1ex]
		& \quad\quad - \frac{1}{2^{m-1}} \sum_{j = 1}^{m} \begin{aligned}[t] (-1)^{j-1}  \Lic_{T=0}( &\{0\}^{j-1} \mid \tfrac{1}{2} y_{j,j-1}, \tfrac{1}{2} y_{j,j-2},\ldots, \tfrac{1}{2} y_{j,1}) \cdot  \\
		& \cdot  \Big( \frac{\ii\pi}{2} + B^t_{T=\log2}(0 \mid y_{j}) \Big) \Lic_{T=0}(\{0\}^{m-j}  \mid \tfrac{1}{2} y_{j+1,j} \,, \ldots, \tfrac{1}{2} y_{m,j} ) 
		\end{aligned} \\[1ex]
		%-----------------------
		& \, = \, - \frac{(\ii\pi/2)^m}{m!} - \sum_{\ell = 1}^{m-1} \frac{(\ii\pi/2)^\ell}{\ell!} \bigg\{ \sum_{j = 0}^{m-\ell} (-1)^{j}  
		\begin{aligned}[t]
		\Tic_{T = \log2}(& \{0\}^{m-j-\ell} \mid y_{j+\ell+1}, \ldots, y_m) \cdot \\
		& \cdot \Tic_{T=\log2}(\{0\}^{j} \mid -y_{j+\ell},\ldots,-y_{\ell+1}) 
		\end{aligned} \\
		& \quad\quad - \frac{1}{2^{m-\ell-1}} \sum_{j = 1}^{m-\ell} \begin{aligned}[t]& (-1)^{j-1}  \Lic_{T=0}( \{0\}^{j-1} \mid \tfrac{1}{2} y_{j+\ell,j+\ell-1}, \tfrac{1}{2} y_{j+\ell,j+\ell-2},\ldots, \tfrac{1}{2} y_{j+\ell,\ell+1}) \cdot  \\
		& \cdot  \Big( \frac{\ii\pi}{2} + B^t_{T=\log2}(0 \mid y_{j+\ell}) \Big)  \Lic_{T=0}(\{0\}^{m-j-\ell}  \mid \tfrac{1}{2} y_{j+\ell+1,j+\ell} \,, \ldots, \tfrac{1}{2} y_{m,j+\ell} ) \bigg\}
		\end{aligned}
		\end{align*}
		Apply the induction hypotheses (shuffle product, and \autoref{eqn:claim}, for cases \( <m \)), we find the right hand side is given as follows
		\begin{align*}
		& = - \frac{(\ii\pi/2)^m}{m!} - \sum_{l=1}^{m-1} \frac{(\ii\pi/2)^\ell}{\ell!} \Bigg( \delta_{\text{$m{-}\ell$ even}} \cdot \frac{ ( \ii\pi / 2 )^{m-\ell}}{(m-\ell)!} - \frac{1}{2^{m - \ell - 1}} \frac{\ii\pi}{2} \delta_\text{$m{-}\ell$ odd} \cdot \frac{(\ii\pi)^{m-\ell-1}}{(m-\ell)!}  \Bigg)  \\
		& = - \frac{(\ii\pi/2)^m}{m!} - \sum_{l=1}^{m-1} \frac{(\ii\pi/2)^\ell}{\ell!}  \cdot (-1)^{m-\ell} \frac{ ( \ii\pi / 2 )^{m-\ell}}{(m-\ell)!}   \\
		& = - \frac{(\ii\pi/2)^m}{m!} - \sum_{l=1}^{m-1} \frac{(\ii\pi/2)^m}{m!} \cdot (-1)^{m-\ell} \binom{m}{\ell} 
		=  (-1)^m \frac{(\ii\pi/2)^m}{m!} \,, 
		\end{align*}
		via the binomial theorem.  By comparing the real and imaginary parts, and considering the cases where \( m \) is even, or \( m \) is odd, we establish \autoref{eqn:claim} and the shuffle product identity \autoref{eqn:shuffleclaim} for the case \( m \geq 3 \).  This proves the lemma.
	\end{proof}
	
	We can therefore state the following identity, by combining the individual cases from \autoref{eqn:case1final}, \autoref{eqn:case2final} (after removing the \( \frac{\ii\pi}{2} \) term with \autoref{lem:shuffle}, and \autoref{eqn:claim}.
	
	\begin{Thm}[Symmetry Theorem, generating series form] \label{thm:symgsfull}
		Let \( y_{i,j} \coloneqq y_i - y_j \), and \( \ee(x) \coloneqq \exp(2\pi\ii x) \).  Then for every choice of \( m \) and every choice of \( \vec{\phi} = (\phi_1,\ldots,\phi_m) \), following generating series identity holds:
		\begin{align*}
		& \sum_{j=0}^m (-1)^j \ee(\phi_1 + \cdots + \phi_j)
		\begin{aligned}[t]
		\Tic_{T=\log2}(& \phi_{j+1}, \ldots, \phi_m \mid y_{j+1}, \ldots, y_m) \cdot {} \\
		& {} \cdot \Tic_{T=\log2}(-\phi_j, \ldots, -\phi_1 \mid -y_j, \ldots, -y_1) 
		\end{aligned} \\
		& -
		\frac{1}{2^{m-1}} \sum_{j=1}^m \begin{aligned}[t]  (-1)^{j-1} & \Lic_{T=0}(-\phi_{j-1}, \ldots, -\phi_1 \mid \tfrac{1}{2} y_{j,j-1} \,, \ldots, \tfrac{1}{2} y_{j,1}) \cdot {} \\
		& \cdot  B^t_{T=\log2}(\phi_1 + \cdots + \phi_m \mid y_j)  \Lic_{T=0}(\phi_{j+1}, \ldots, \phi_m \mid \tfrac{1}{2} y_{j+1,j}\,, \ldots, \tfrac{1}{2} y_{m,j} ) 
		\end{aligned} \\
		& = \begin{cases}
		\displaystyle\frac{1}{m!} \Big( \displaystyle\frac{\ii\pi}{2} \Big)^m & \text{\( \vec{\phi} = \vec{0} \) and \( m \) even,} \\[2ex]
		\quad 0 & \text{otherwise\,.}
		\end{cases}
		\end{align*}
	\end{Thm}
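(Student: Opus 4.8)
The plan is to assemble \autoref{thm:symgsfull} directly from the three regularized case analyses already carried out, since the content of the theorem is precisely that a single uniform right-hand side records what happens in each regime of $\vec{\phi}$. I would observe first that, coefficient-by-coefficient in the power-series variables $y_i$, both sides are finite linear combinations of stuffle-regularized MZV's and M$t$V's with coefficients in $\Q[\ii\pi]$; moreover every series involved depends on $\phi_i$ only through the sign $\ee(\phi_i)$, hence only on $\phi_i$ modulo $\Z$. It therefore suffices to verify the identity for each fixed $\vec{\phi}$, and we may reduce to $\phi_i \in [0,1)$ and split according to the trichotomy used above: $\vec{\phi} \neq \vec{0}$ with $\phi_1 + \cdots + \phi_m \neq 0$; $\vec{\phi} \neq \vec{0}$ with $\phi_1 + \cdots + \phi_m = 0$; and $\vec{\phi} = \vec{0}$.

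In the first case the claim is literally \autoref{eqn:case1final}, produced by perturbing the phases into the non-degenerate region, applying \autoref{thm:symgsrestricted}, and extracting the constant term of the asymptotic expansion through \autoref{lem:asympviastuff}, \autoref{lem:tasymp} and \autoref{lem:zetasymp}; there the right-hand side is $0$. In the second case the same perturbation yields \autoref{eqn:case2final}, which agrees with the Case 1 identity except for an extra $\frac{\ii\pi}{2}$ contribution coming from $\reg^\asym_0 B^t(\eps \mid y_j)$. The coefficient of this spurious term is exactly the alternating sum of products of $\Lic_{T=0}$ series appearing in \autoref{lem:shuffle}, which vanishes identically, so Case 2 again gives right-hand side $0$.

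The substantive case is $\vec{\phi} = \vec{0}$, where the one-sided limits of $\Li_1$ from $\eps \to 0^+$ and $\eps \to 0^-$ no longer cancel and genuine powers of $\ii\pi$ survive. Here I would simply invoke \autoref{clm:id}: after the regularization change-of-variables of \autoref{prop:reg} applied to \autoref{eqn:gscase3}, the statement \autoref{eqn:claim} asserts exactly that the left-hand side of the theorem equals $\frac{1}{m!}\big(\frac{\ii\pi}{2}\big)^m$ for $m$ even and $0$ for $m$ odd. Combining the three cases then produces the stated uniform right-hand side.

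The main obstacle lives entirely in Case 3: the surviving $\ii\pi$-terms do not cancel formally but must be resolved by the simultaneous induction on $m$ of \autoref{eqn:claim} together with the auxiliary shuffle identity \autoref{eqn:shuffleclaim}, whose interlocking is what forces the binomial collapse yielding the clean value $\frac{1}{m!}(\ii\pi/2)^m$. By contrast, Cases 1 and 2 are essentially formal once the analytic passage to the limit in \autoref{thm:symgsrestricted} and the shuffle vanishing of \autoref{lem:shuffle} are granted, so the real work of the theorem is the bookkeeping of the hybrid regularizations at $\vec{\phi} = \vec{0}$.
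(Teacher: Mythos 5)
Your proposal matches the paper's own argument exactly: the paper assembles \autoref{thm:symgsfull} by combining \autoref{eqn:case1final} for the non-degenerate case, \autoref{eqn:case2final} with the extra $\frac{\ii\pi}{2}$ term eliminated via \autoref{lem:shuffle} for the case $\phi_1+\cdots+\phi_m=0$, and the inductive \autoref{clm:id} for $\vec{\phi}=\vec{0}$, precisely the trichotomy and ingredients you describe. Your identification of Case 3, with its interlocking induction on \autoref{eqn:claim} and \autoref{eqn:shuffleclaim}, as the substantive step is also accurate.
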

	
	This theorem provides a version of the Symmetry Theorem in which the product terms consist of products of multiple \( t \)-values, and products of multiple zeta values.  In order to establish the claim as stated in \autoref{thm:scon}, we must convert the multiple zeta products into multiple \( t \) products.
	
	To that end, we recall the following result from \cite{murakami} relating multiple \( t \)-values and multiple zeta values.
	
	\begin{Thm}[Murarkami, Theorem 8 in \cite{murakami}]
		The motivic \( t \)-values \( t^\mot(k_1,\ldots,k_d) \), \( k_i \in \{2,3\} \) are a basis for motivic multiple zeta values
	\end{Thm}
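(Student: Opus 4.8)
The plan is to follow Brown's strategy for the Hoffman $\{2,3\}$-basis of motivic multiple zeta values, transported to the level-two setting in which the motivic $t$-values naturally live. I would first reduce the statement to a question of \emph{linear independence} by a dimension count. Let $d_N$ denote the dimension of the weight-$N$ graded piece of the algebra $\mathcal{H}$ of motivic MZV's; the Deligne--Goncharov upper bound gives $\dim \mathcal{H}_N \le d_N$, where $d_N = d_{N-2} + d_{N-3}$ with $d_0 = 1$, $d_1 = 0$, $d_2 = 1$. The number of compositions of $N$ with every part in $\{2,3\}$ satisfies exactly the same recursion and initial data, hence also equals $d_N$. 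Thus, once one checks that each $t^\mot(k_1,\ldots,k_d)$ with $k_i \in \{2,3\}$ actually lies in $\mathcal{H}$, it suffices to prove these $d_N$ elements are linearly independent: independence forces $\dim \mathcal{H}_N \ge d_N$, which together with the upper bound pins $\dim \mathcal{H}_N = d_N$ and makes them a basis.

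Next I would install the motivic machinery. Realising $t^\mot$ as a motivic iterated integral with singularities in $\{0,\pm1,\infty\}$, so that a priori it lives in the larger algebra of motivic Euler sums, one uses a distribution/parity relation to show that the $\{2,3\}$-valued $t^\mot$ in fact descend into $\mathcal{H}$. On $\mathcal{H}$ Brown's infinitesimal coaction furnishes derivation operators
\[
D_{2r+1}\colon \mathcal{H}_N \longrightarrow \Lc_{2r+1} \otimes \mathcal{H}_{N-2r-1},
\]
where $\Lc$ is the Lie coalgebra of indecomposables, together with the fundamental fact that, in positive weight $N$, the kernel of $\bigoplus_{r\ge1} D_{2r+1}$ is at most one-dimensional, spanned by $\zt^\mot(2)^{N/2}$ when $N$ is even and zero when $N$ is odd. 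The action of each $D_{2r+1}$ on an iterated integral is the explicit Goncharov--Brown formula (a sum over subquotient words cut out of the integration word), which I would specialise to the $t$-value words.

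With this in place the heart of the argument is a Brown-style unipotence computation. Order the $\{2,3\}$-words of a fixed weight by a suitable level/position filtration (for MZV's Brown orders by the positions of the $3$'s). Applying $\bigoplus_r D_{2r+1}$ to $t^\mot(k_1,\ldots,k_d)$ and reading off the left-hand $\Lc$-factor produces odd-weight generators tensored with strictly shorter $\{2,3\}$-valued $t$-values; the claim is that, with respect to the chosen order, the resulting transition matrix against the proposed basis is triangular with an invertible diagonal. Establishing this needs an inductive descent on depth together with a counterpart of Brown's arithmetic lemma, guaranteeing that the relevant coefficients -- now carrying extra powers of $2$ from the level-two normalisation of $t$-values -- do not vanish. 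The one direction annihilated by every $D_{2r+1}$, namely the line spanned by $\zt^\mot(2)^{N/2}$ (a rational multiple of $t^\mot(\{2\}^{N/2})$), is fixed separately by a period comparison, since the real number $t(\{2\}^n)$ is a known nonzero rational multiple of $\pi^{2n}$.

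The step I expect to be the main obstacle is precisely this arithmetic and combinatorial control of the diagonal of the coaction matrix: one must show that in the level-two setting the coefficients produced by $D_{2r+1}$ acting on the $\{2,3\}$ $t$-words stay nonzero through the inductive reductions. For MZV's this is Brown's delicate $2$-adic nonvanishing computation, and here it is complicated further by the distribution relations tying $t$-values to alternating MZV's and by the extra dyadic factors they introduce. Everything else -- the dimension bookkeeping, the descent of $t^\mot$ into $\mathcal{H}$, and the formal triangularity of the transition matrix -- is comparatively routine once this nonvanishing is secured.
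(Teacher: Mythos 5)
This statement is not proved in the paper at all: it is quoted verbatim as Theorem~8 of \cite{murakami} and used as a black box to convert the MZV products in \autoref{thm:symgsfull} into products of $t$-values, so there is no internal proof to compare against. That said, your outline is a faithful reconstruction of Murakami's actual argument, which is indeed Brown's $\{2,3\}$-basis strategy transported to level two: the dimension count against $d_N=d_{N-2}+d_{N-3}$, the derivations $D_{2r+1}$ with (essentially) one-dimensional kernel, a transition matrix made triangular by ordering the positions of the $3$'s, and a $2$-adic non-vanishing lemma for the diagonal entries, whose coefficients come from the closed-form evaluation of $t(\{2\}^a,3,\{2\}^b)$ --- the level-two analogue of Zagier's $\zeta(\{2\}^a,3,\{2\}^b)$, both of which are quoted in \autoref{sec:t3223} of this paper. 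You correctly identify the dyadic non-vanishing as the crux. The one place your sketch is too glib is the step you call ``comparatively routine'': a priori $t^\mot(k_1,\ldots,k_d)$ lives in the algebra of motivic Euler sums, and its descent into $\mathcal{H}$ is not a formal consequence of distribution or parity relations; it rests on a Galois-descent (unramifiedness) criterion in the style of Glanois, applied inductively \emph{alongside} the coaction computation, with the explicit maximal-height evaluation supplying the base case. So the descent is intertwined with, rather than prior to, the triangularity argument.
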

	
	This is a result about so-called `motivic' objects, the technical details of which are not so important here.  Upon applying the period map, the upshot of Murakami's result is that every multiple zeta value can be written as a linear combination of multiple \( t \)-values, in fact of the special form \( t(k_1,\ldots,k_d) \), \( k_i \in \{2,3\} \).
	
	Consider then the case \( \vec{\phi} = \vec{0} \) of \autoref{thm:symgsfull}.  We see that by extracting the coefficient of \( y^{n_1 - 1} \cdots y^{n_m - 1} \), and writing every multiple zeta value as a linear combination of multiple \( t \)-values via Murakami's Theorem, the following holds.  For \( n_1 + \cdots + n_m \geq 3 \),
	\[
	\reg_{T=\log2} \Big( \! t(n_1,\ldots,n_m) + (-1)^{n_1 + \cdots + n_m} t(n_m, \ldots, n_1) \! \Big) = 0 \pmod{\text{\rm products of \( t \)'s}}
	\]
	
	This proves the version of the Symmetry Theorem given in \autoref{thm:scon}.
	
	\begin{Rem}
		We point out that the only non-trivial case of weight \( \leq 2 \), namely \( (n_1,n_2) = (1,1) \), gives
		\[
		\reg_{T=\log2}^\ast \big( t(1,1) + (-1)^2 t(1,1) \big) = \reg_{T=\log2}^\ast \big( t(1)^2 - t(2) \big) \,.
		\]
		The Symmetry Theorem therefore does not hold in this case, since \( t(2) \) is irreducible, at least when viewed as a multiple \( t \)-value.  But \( t(2) = \frac{\pi^2}{8} \) could be viewed as decomposable relative to alternating multiple \( t \)-values; it is a product \( \pi \times \pi \), where \( \pi = -4t(\overline{1}) \) is a strictly alternating \( t \) value.  (Recall the notation \( \overline{k} \) means the sign of argument \( k \) is \( \eps = -1 \) in the framework of alternating MZV's and M$t$V's.)
	\end{Rem}
	
	In the case \( \vec{\phi} \in \{ 0, \frac{1}{2} \}^m \setminus \{ (0,\ldots,0) \} \), we have the following relationship between alternating \( t \)-values and alternating multiple zeta values.
	
	\begin{Thm}[Charlton, Corollary 8.26 in \cite{charltont2212}]
		The stuffle regularized motivic multiple \( t \)-values \( t^{\mot,\ast}(k_1,\ldots,k_d) \), \( k_i \in \{1,2\} \) (with regularization parameter \( t^{\mot,\ast}(1) = \log^\mot(2) \)) are a basis for motivic alternating multiple zeta values.
	\end{Thm}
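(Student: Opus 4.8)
The plan is to run Brown's strategy for motivic basis theorems in the level-$2$ setting, directly parallel to the proof of the theorem of Murakami \cite{murakami} quoted above, but with the digit set $\{1,2\}$ in place of $\{2,3\}$.

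The first step is to reduce the claim to a statement of linear independence by matching dimensions. By Deligne's determination of the motivic Galois group of $MT(\Z[\half])$, the pro-unipotent radical has free Lie algebra with one generator in degree $1$ (the motivic $\log^\mot 2$) together with one generator in each odd degree $\ge 3$. Hence the de Rham Hopf algebra $\mathcal{A}^{(2)}$ of (unramified) motivic alternating multiple zeta values has Hilbert series $\tfrac{1-t^2}{1-t-t^2}$, and the full period algebra is $\mathcal{H}^{(2)} = \mathcal{A}^{(2)} \otimes \Q[\zeta^\mot(2)]$ with $\zeta^\mot(2)$ of degree $2$. Multiplying by $\tfrac{1}{1-t^2}$ yields the Hilbert series $\tfrac{1}{1-t-t^2}$, so that $\dim_\Q \mathcal{H}^{(2)}_n = F_{n+1}$, the $(n{+}1)$-th Fibonacci number. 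As the number of compositions $(k_1,\dots,k_d)$ of $n$ with each $k_i \in \{1,2\}$ is also $F_{n+1}$, the proposed family has exactly the right cardinality in each weight, and it suffices to prove that the $t^{\mot,\ast}(k_1,\dots,k_d)$ with $k_i \in \{1,2\}$ are linearly independent.

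For independence I would use the infinitesimal motivic coaction. Projecting to the de Rham quotient $\pi\colon \mathcal{H}^{(2)} \to \mathcal{A}^{(2)}$ (which sends $\zeta^\mot(2)$ to $0$), one has the derivations $D_r \colon \mathcal{A}^{(2)}_n \to \mathcal{L}_r \otimes \mathcal{A}^{(2)}_{n-r}$ for $r \ge 1$, where $\mathcal{L} = \mathcal{A}^{(2)}_{>0}/(\mathcal{A}^{(2)}_{>0})^2$, and $\bigoplus_{r} D_r$ is injective on $\mathcal{A}^{(2)}_{>0}$ modulo the degree-$n$ generators. Inducting on the weight $n$, a nontrivial linear relation among weight-$n$ words would, after applying each $D_r$, produce relations among strictly lower-weight words; by the induction hypothesis these are controlled, so the whole argument reduces to understanding the leading behaviour of $D_r$ on the $\{1,2\}$-words.

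The crux is therefore the explicit computation of $D_r\, t^{\mot,\ast}(k_1,\dots,k_d)$ for $k_i \in \{1,2\}$. Reading the word as maximal blocks of $2$'s separated by $1$'s, the coaction excises a connected subword and returns, on the quotient factor, a value of the form $t^{\mot,\ast}(\{2\}^a,1,\{2\}^b)$ or $t^{\mot,\ast}(\{\overline1\}^a,1,\{\overline1\}^b)$, whose closed evaluations are precisely those established in \cite{charltont2212,charltontmm1m}. Substituting these, I would order the words by the number and positions of their $1$'s (refining the ordering used in the $\{2,3\}$ case) and show that the transition matrix from $\{D_r t^{\mot,\ast}\}$ to a fixed weight-graded basis is unitriangular, hence invertible. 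The main obstacle is exactly this nondegeneracy: one must check that the coaction coefficients coming from the $t$-value evaluations never all vanish, and --- the genuinely delicate point in level $2$ --- that the stuffle regularization, which injects $\log^\mot 2 = t^{\mot,\ast}(1)$ whenever a part $1$ falls into the trailing slot (governed by the change-of-parameter formula \autoref{prop:reg}), contributes only to off-diagonal entries and does not disturb the leading unitriangular term. As in Brown's argument, this nondegeneracy ultimately follows from a generating-series identity for the relevant evaluations, here the $t$-value analogue of Zagier's formula for $\zeta(\{2\}^a,3,\{2\}^b)$.
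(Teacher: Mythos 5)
First, a point of comparison: the paper offers no proof of this statement. It is imported verbatim as Corollary 8.26 of \cite{charltont2212}, exactly as Murakami's theorem is imported a few lines earlier, so there is no in-paper argument to measure your proposal against. Judged on its own terms, your outline is the expected one: it is the level-two adaptation of Brown's proof of the $\{2,3\}$ theorem, and your dimension count is correct. Deligne's determination of the motivic Galois group of $MT(\mathbb{Z}[1/2])$ gives a free Lie algebra with one generator in degree $1$ and one in each odd degree $\geq 3$, whence the Hilbert series $1/(1-t-t^2)$ and $\dim_{\Q}\mathcal{H}^{(2)}_n = F_{n+1}$, which matches the number of $\{1,2\}$-compositions of $n$. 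So the theorem does reduce to linear independence, and the coaction/derivation strategy is the right vehicle for that.

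The gap is that essentially all of the difficulty is concentrated in the step you defer. Invertibility of the transition matrix of $\bigoplus_r D_r$ on the $\{1,2\}$-words is not a formal consequence of possessing closed evaluations of $\reg_{T=\log2}t(\{2\}^a,1,\{2\}^b)$ and $\reg_{T=\log2}t(\{\overline{1}\}^a,1,\{\overline{1}\}^b)$: in Brown's $\{2,3\}$ argument the analogous nondegeneracy requires showing that a specific coefficient in Zagier's formula is nonzero, which is done by a delicate $2$-adic valuation argument, and the corresponding nonvanishing must be established here for the coefficients extracted from the $t$-value evaluations. Moreover level two has a genuinely new feature with no analogue in Brown's setting, namely the degree-one derivation $D_1$ dual to $\log^\mot 2$, and this is precisely where the stuffle regularization (via the change-of-parameter formula of \autoref{prop:reg}) feeds back into the coaction; asserting that it "contributes only to off-diagonal entries" names the desired conclusion rather than proving it. As a roadmap your proposal is sound and consistent with how \cite{charltont2212} (building on \cite{glanoisTh}) proceeds, but as a proof it is missing its central lemma.
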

	
	After application of the period map, we have that every alternating multiple zeta value can be written as a linear combination of multiple \( t \)-values of the form \( \reg^\ast_{T=\log2} t(k_1,\ldots,k_d) \), \( k_i \in \{1,2\}\), regularized with \( \reg^\ast_{T=\log2} t(1) = \log2 \).
	
	Generally in the case \( \vec{\phi} \in \{ 0, \frac{1}{2} \}^m \setminus \{(0, \ldots, 0)\} \) we have a corresponding symmetry result.   By extracting the coefficient of \( y^{n_1 - 1} \cdots y^{n_m - 1} \), and writing every alternating multiple zeta value as a linear combination of alternating \( t \)-values, the following symmetry result holds.
	
	\begin{Cor}
		The following symmetry result holds, for any choice of \( \eps_i = \pm 1 \):
		\begin{align*}
		\reg_{T=\log2}^\ast \bigg( \! t\sgnarg{\eps_1, \ldots, \eps_m}{n_1,\ldots,n_m} + (-1)^{n_1 +\cdots + n_m} (-1)^{\#\{ \eps_i = -1 \}} t\sgnarg{\eps_m, \ldots, \eps_1}{n_m,\ldots,n_1} \! \bigg) \\
		= 0 \pmod{\text{\rm products of alternating $t$'s}} \,.
		\end{align*}
	\end{Cor}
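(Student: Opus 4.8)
The plan is to specialise the generating-series form of the Symmetry Theorem, \autoref{thm:symgsfull}, to phases lying in \( \{0, \half\}^m \). Since \( \ee(0) = 1 \) and \( \ee(\half) = -1 \), such phases produce precisely the alternating signs \( \eps_i \in \{\pm1\} \), and as \( \vec{\phi} \) ranges over \( \{0,\half\}^m \setminus \{\vec{0}\} \) we obtain every sign pattern other than the all-\( +1 \) one. The latter is the classical case \( \eps_i \equiv 1 \), already settled in \autoref{thm:scon} via Murakami's theorem, so I would treat it separately and assume henceforth \( \vec{\phi} \neq \vec{0} \). For every such \( \vec{\phi} \) the right-hand side of \autoref{thm:symgsfull} is \( 0 \) (the ``otherwise'' branch), so the entire left-hand side vanishes.

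First I would extract the coefficient of \( y_1^{n_1-1} \cdots y_m^{n_m-1} \) from the identity. In the first double sum only the extreme terms \( j = 0 \) and \( j = m \) fail to be products. The \( j = 0 \) term is \( \Tic_{T=\log2}(\phi_1,\ldots,\phi_m \mid y_1,\ldots,y_m) \), contributing \( \reg_{T=\log2} t\sgnarg{\eps_1,\ldots,\eps_m}{n_1,\ldots,n_m} \). The \( j = m \) term is \( (-1)^m \ee(\phi_1+\cdots+\phi_m)\, \Tic_{T=\log2}(-\phi_m,\ldots,-\phi_1 \mid -y_m,\ldots,-y_1) \), and reading off its coefficient is the crux. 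The reversal \( y_i \mapsto -y_{m+1-i} \) contributes a sign \( (-1)^{(n_1+\cdots+n_m)-m} \), the phase gives \( \ee(\phi_1+\cdots+\phi_m) = \prod_i \eps_i = (-1)^{\#\{\eps_i = -1\}} \), and \( \ee(-\phi_i) = \eps_i \) (as \( \eps_i = \pm1 \)) turns the reversed phases back into \( \eps_m,\ldots,\eps_1 \). Collecting \( (-1)^m \), the reversal sign, and the phase, these collapse to exactly \( (-1)^{n_1+\cdots+n_m}(-1)^{\#\{\eps_i=-1\}} \), so the \( j = m \) term contributes \( (-1)^{n_1+\cdots+n_m}(-1)^{\#\{\eps_i=-1\}} \reg_{T=\log2} t\sgnarg{\eps_m,\ldots,\eps_1}{n_m,\ldots,n_1} \), precisely matching the statement.

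Next I would identify everything else as products. For \( 1 \leq j \leq m-1 \) the first-sum terms are products of two \( \Tic \) series, hence products of alternating M\( t \)V's; every term of the second sum has the shape \( \Lic \cdot B^t \cdot \Lic \), where \( B^t \) is a depth-\( 1 \) M\( t \)V combination by \autoref{prop:bernoulli_depth1} and, for \( m \geq 2 \), at least one \( \Lic \) factor has positive depth, so these too are genuine products of alternating MZV's and alternating M\( t \)V's. Finally I would invoke Charlton's theorem (Corollary 8.26 in \cite{charltont2212}) to rewrite each alternating MZV factor as a \( \Q \)-linear combination of regularised alternating \( t \)-values \( \reg_{T=\log2} t(k_1,\ldots,k_d) \) with \( k_i \in \{1,2\} \). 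This turns every product term into a product of alternating \( t \)-values, so the extracted identity reads exactly as the claimed congruence modulo products of alternating \( t \)'s.

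I expect the main obstacle to be the sign bookkeeping in the coefficient extraction of the \( j=m \) term: one must simultaneously track the summation sign \( (-1)^m \), the overall phase \( \ee(\phi_1+\cdots+\phi_m) \), and the sign produced by the substitution \( y_i \mapsto -y_{m+1-i} \) inside the generating series, and verify that they amalgamate into the single factor \( (-1)^{n_1+\cdots+n_m}(-1)^{\#\{\eps_i=-1\}} \). The only other delicate point is confirming that no lower-depth irreducible survives among the ``product'' terms; this is guaranteed by the clean product structure of \autoref{thm:symgsfull} (cf.\ the remark following \autoref{texp}), which I would simply quote rather than reprove.
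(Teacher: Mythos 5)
Your proposal is correct and follows essentially the same route as the paper: specialise \autoref{thm:symgsfull} to \( \vec{\phi}\in\{0,\half\}^m \), extract the coefficient of \( y_1^{n_1-1}\cdots y_m^{n_m-1} \) (with the \( j=0 \) and \( j=m \) terms supplying the two main terms and the stated sign), and convert the alternating MZV products into alternating \( t \)-products via Corollary 8.26 of \cite{charltont2212}. Your sign bookkeeping for the \( j=m \) term and your explicit attention to the product structure (and to the all-\( +1 \) and small-\( m \) edge cases) are, if anything, more detailed than the paper's brief derivation.
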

	
	\section{Applications}
	\label{sec:applications}
	In this section we apply the Symmetry Theorem to obtain a number
	of evaluations of multiple $t$-values, multiple $t$-star values,
	and multiple $t^{\half}$-values.
	\subsection{\texorpdfstring{Evaluations of $t(3,\{2\}^n,3)$ and $t^\star(3,\{2\}^n,3)$}{Evaluations of t(3,\{2\}\textasciicircum{}n,3) and t\textasciicircum{}*(3,\{2\}\textasciicircum{}n,3)}}\label{sec:t3223andstar}
	
	\subsubsection{\texorpdfstring{Evaluation of $t(3,\{2\}^n,3)$}{Evaluation of t(3,\{2\}\textasciicircum{}n,3)}}
	\label{sec:t3223}
	
	Here we will establish the following evaluation for \( t(3,\{2\}^n,3) \).  In the following section we shall also obtain a similar expression for the corresponding \( t^\star \)-values.
	
	\begin{Thm}[Evaluation of \( t(3,\{2\}^n,3) \)]\label{thm:t3223:eval}
		We have the following evaluation for \( t(3,\{2\}^n,3) \) as a polynomial in Riemann zeta values.
		\begin{align*}
		t(3,\{2\}^n,3) &= \Big({-}\frac{1}{4}\Big)^{n+2} \Big\{ {-}\frac{9+6n}{2} \zeta(2) \zeta(2n+4) \\
		& \quad\quad+ \sum_{\substack{q+r+s=n+2 \\ q,r,s\geq 1}} 2 r s (2 - 2^{-2r} - 2^{-2s}) \zeta(2r + 1)\zeta(2s+1) \cdot (-1)^q \frac{\pi^{2q}}{(2q)!} \\
		& \quad\quad + \sum_{\substack{r+s = n+2 \\ r,s \geq 1}}2 r s (2 - 2^{-2r})(2 - 2^{-2s}) \zeta(2r+1) \zeta(2s+1) \Big\} \,.
		\end{align*}
	\end{Thm}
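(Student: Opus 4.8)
The plan is to obtain the evaluation by specializing the generating-series Symmetry Theorem (\autoref{thm:symgsfull}) to \( \vec{\phi} = \vec{0} \) and depth \( m = n+2 \), and then to evaluate the resulting product terms using the known closed forms for single-\(3\) multiple \( t \)-values and multiple zeta values. First I would note that the composition \( (3,\{2\}^n,3) \) is palindromic of even weight \( 2n+6 \), so \autoref{thm:scon} already guarantees \( t(3,\{2\}^n,3) \equiv 0 \pmod{\text{products}} \); the work is in making the products explicit. Extracting the coefficient of \( y_1^2 y_2 \cdots y_{m-1} y_m^2 \) (the monomial attached to \( (3,\{2\}^n,3) \)) from the identity of \autoref{thm:symgsfull}, the boundary terms \( j = 0 \) and \( j = m \) of the left-hand sum each contribute \( t(3,\{2\}^n,3) \) — the latter because reversal fixes the composition and the sign works out to \( (-1)^{2n+6} = +1 \) — so together they give \( 2\,t(3,\{2\}^n,3) \). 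Since the monomial has positive degree, the constant \( \tfrac{1}{m!}(\tfrac{\ii\pi}{2})^m \) on the right does not contribute, and we may solve for \( t(3,\{2\}^n,3) \) in terms of the interior left-hand terms and the entire right-hand sum.

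Next I would evaluate these remaining contributions. The interior terms \( 1 \le j \le m-1 \) of the left-hand sum produce products \( t(\{2\}^a,3)\cdot t(\{2\}^b,3) \) with \( a+b = n \), each factor being Murakami's case \( t(\{2\}^a,3,\{2\}^0) \) \cite{murakami}. On the right-hand side, using \autoref{prop:bernoulli_depth1} one has \( B^t_{T=\log2}(0 \mid y) = 2\sum_{k\ge1} t(2k)\,y^{2k-1} \), so the \( t \)-Bernoulli factor only contributes the depth-\(1\) values \( t(2k) = (1-2^{-2k})\zeta(2k) \), i.e. rational multiples of \( \pi^{2k} \); the two \( \Lic_{T=0} \) factors contribute multiple zeta values whose generating monomials, after the substitution \( y_{i,j} = y_i - y_j \), are of Zagier's shape \( \zeta(\{2\}^a,3,\{2\}^b) \) \cite{zagier2232}. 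At this stage it is useful to record the identity \( (2 - 2^{-2r})\zeta(2r+1) = 2\,t(2r+1) \), which is what rewrites the relevant \( t \)-value products into the \( \zeta(2r+1)\zeta(2s+1) \) shape appearing in the statement.

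Finally I would collect all contributions and simplify. The substitution \( y_{i,j}=y_i-y_j \) and the factors of \( \tfrac12 \) in the arguments of the \( \Lic \) series introduce binomial weights, so the right-hand side delivers not individual \( \zeta(\{2\}^a,3,\{2\}^b) \) but certain weighted combinations of them; these are exactly the combinations evaluated by the Ohno--Zagier theorem \cite{ohno-zagier}, which turns them into the \( \zeta(2)\zeta(2n+4) \) and \( \zeta(2r+1)\zeta(2s+1)\pi^{2q} \) terms of the target. The main obstacle I anticipate is precisely this last bookkeeping: tracking the signs and binomial coefficients coming from the \( y_{i,j} \) and \( \tfrac12 \)-rescalings, and then reconciling the three inputs (Murakami products, Zagier values, and Ohno--Zagier combinations) so that after cancellation the global prefactor \( (-\tfrac14)^{n+2} \) and the precise coefficients — in particular \( -\tfrac{9+6n}{2} \) in front of \( \zeta(2)\zeta(2n+4) \) — emerge. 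All the analytic and algebraic inputs are available; the difficulty is organizational rather than conceptual.
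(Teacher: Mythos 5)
Your overall strategy coincides with the paper's: specialize \autoref{thm:symgsfull} to \( \vec{\phi}=\vec{0} \) and \( m=n+2 \), extract the coefficient of \( y_1^2y_2\cdots y_{n+1}y_{n+2}^2 \), observe that the \( j=0 \) and \( j=m \) terms of the left-hand sum give \( 2t(3,\{2\}^n,3) \) while the interior terms give the products \( t(\{2\}^i,3)\,t(\{2\}^{n-i},3) \) (Murakami), and note that only \( t(2) \) survives from the \( B^t \) factor; this is exactly the paper's \autoref{eqn:t3223:orig}, and the three external inputs you cite are the ones the paper uses.

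The gap is in your final step. The right-hand side does \emph{not} deliver combinations ``exactly evaluated by the Ohno--Zagier theorem'': besides the products \( \zeta(\{2\}^i,3)\zeta(\{2\}^{n-1-i},3) \) (which are of Zagier's single-\(3\) shape and are handled via \( F(u,0) \)), the coefficient extraction produces the weight-\((2n+4)\) combination \( 3\zeta(\{2\}^{n},4) + 2\sum_{i=0}^{n-1}\zeta(\{2\}^i,3,\{2\}^{n-1-i},3) \). This is neither of Zagier's shape nor an Ohno--Zagier sum: the coefficients \(3\) and \(2\) are not constant, and the second \(3\) is pinned at the final slot, so it is not the symmetric sum over all MZV's of fixed weight, depth and height. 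The paper's essential extra idea --- absent from your proposal --- is to recognize this combination as \( -\reg^{\shuffle}_{T=0}\zeta(1,\{2\}^{n+1},1) \) via integral duality, apply the shuffle antipode (string reversal) to trade it for \( \reg^{\shuffle}_{T=0}\zeta(\{2\}^{n+1},1,1) \) modulo products of \( \zeta(\{2\}^a,1) \)'s and \( \zeta(1,\{2\}^b) \)'s, and only then invoke Ohno--Zagier (via \( \partial_x\phi_0 \) and \( \tfrac12\partial_x^2\phi_0 \) at \( x=y=0 \), together with a separate symmetric-sum evaluation of \( \sum_i\zeta(\{2\}^i,4,\{2\}^{n-i}) \)) to reduce everything to odd zeta values and powers of \( \pi \). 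Without this duality/antipode chain the ``bookkeeping'' you defer cannot be completed: there is no direct Ohno--Zagier evaluation of the double-\(3\) combination, so the closed form --- in particular the coefficient \( -\tfrac{9+6n}{2} \) of \( \zeta(2)\zeta(2n+4) \) --- would not emerge.
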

	
	\begin{proof}
		We start by extracting the coefficient of \( y_1^2y_2\cdots y_{n+1} y_{n+2}^2 \) from the generating series expression in \autoref{thm:symgsfull}, in the case \( \phi = \vec{0} \).  In this case, the Symmetry Theorem reads
		\begin{equation}\label{eqn:tsymphi0gs:fort3223}
		\begin{aligned}
		& \sum_{j=0}^m (-1)^j \begin{aligned}[t]
		\Tic_{T=\log2}(& \vec{0} \mid y_{j+1}, \ldots, y_m) \Tic_{T=\log2}(\vec{0} \mid -y_j, \ldots, -y_1) 
		\end{aligned} \\
		& - 
		\frac{1}{2^{m-1}} \sum_{j=1}^m \begin{aligned}[t] (-1)^{j-1}  & \Lic_{T=0}(\vec{0} \mid \tfrac{1}{2} y_{j,j-1} \,, \ldots, \tfrac{1}{2} y_{j,1}) \cdot {} \\ & {} \cdot  B^t_{T=\log2}(0 \mid y_j) \Lic_{T=0}(\vec{0} \mid \tfrac{1}{2} y_{j+1,j} \,, \ldots, \tfrac{1}{2} y_{m,j} ) 
		\end{aligned} \\
		& = \begin{cases}
		\displaystyle\frac{1}{m!} \Big( \displaystyle\frac{\ii\pi}{2} \Big)^m & \text{\( m \) even,} \\[1ex]
		\quad 0 & \text{\( m \) odd\,.}
		\end{cases}
		\end{aligned}
		\end{equation}
		
		Combining all of the contributions and rearranging slightly gives the following identity as the coefficient of \( y_1^2 y_2 \cdots y_{n+1} y_{n+2}^2 \) in \autoref{eqn:tsymphi0gs:fort3223}:
		\begin{equation}\label{eqn:t3223:orig}
		\begin{aligned}
		2 t(3,\{2\}^n,3) = {} & 
		\sum_{i=0}^n t(\{2\}^i, 3) t(\{2\}^{n-i}, 3) - \frac{1}{2^{2n+3}} t(2) \sum_{i=0}^{n-1} \zeta(\{2\}^i,3) \zeta(\{2\}^{n-1-i},3) \\
		& - \frac{1}{2^{2n+2}} t(2) \Big\{ 3 \zeta(\{2\}^{n},4) + 2 \sum_{i=0}^{n-1} \zeta(\{2\}^i, 3, \{2\}^{n-1-i}, 3) \Big\} \,.
		\end{aligned}
		\end{equation}
		At this point we have a reduction of \( t(3,\{2\}^n,3) \) to more familiar objects, and can proceed to reduce the right-hand side.  It will be convenient to make use of the shuffle regularization to understand the MZV combination appearing in \autoref{eqn:t3223:orig}.
		
		\begin{Rem}[Shuffle regularization]
			We give a brief reminder of the shuffle regularization of MZV's.  One can write any MZV as an iterated integral in the following way
			\begin{equation}\label{eqn:zetaasint}
			\zeta(n_1,\ldots,n_\ell) = (-1)^\ell I(0; 1, \{0\}^{n_1 - 1}, \ldots, 1, \{0\}^{n_\ell-1}; 1) \,,
			\end{equation}
			where
			\[
			I(x_0; x_1,\ldots,x_N;x_{N+1}) = \int_{x_0 < t_1 < \cdots < t_N < x_{N+1}} \frac{\dd t_1}{t_1 - x_1} \wedge \cdots \wedge \frac{\dd t_N}{t_N - x_N} \,.
			\]
			Viewing \( x_1,\ldots,x_N \) as a word \( w = x_1 \cdots x_N \) over the alphabet \( \{0,1\} \) (in the case of MZV's), one obtains a product structure on these integrals via the shuffle product \( \shuffle \).  The shuffle product is inductively defined by
			\[
			a w_1 \shuffle b w_2 = a (w_1 \shuffle b w_2) + b (a w_1 \shuffle w_2) \,,
			\]
			and the integral (with fixed bounds) extended to formal linear combinations of words.  Then any integral \( I(0; x_1,\ldots,x_N; 1) \) with \( x_1 = 0 \) or \( x_N = 1 \) (therefore non-convergent) may be written as a polynomial in \( I(0; 0; 1) \) and \( I(0; 1; 1) \) with MZV coefficients.  Replacing \( I(0; 0; 1) \) and \( I(0; 1; 1) \) by the same indeterminate \( T \) (in order to preserve duality under \( t_i \mapsto 1-t_i \)), we obtain the shuffle-regularization polynomial \( \reg_{T}^\shuffle I(0; x_1, \ldots, x_N; 1) \).  Finally \( \reg_{T}^\shuffle \zeta(n_1,\ldots,n_\ell) \) is defined as \( \reg_{T}^\shuffle \) of the corresponding integral from \autoref{eqn:zetaasint}
		\end{Rem}
		
		We note now that the zeta combination in the second line of \autoref{eqn:t3223:orig} can be obtained from the following shuffle regularization, with \( \reg_{T=0}^\shuffle I(0; 0; 1) = \reg_{T=0}^\shuffle I(0; 1; 1) = 0 \), i.e.,
		\begin{align*}
		3 \zeta(\{2\}^{n},4) + 2 \sum_{i=0}^{n-1} \zeta(\{2\}^i, 3, \{2\}^{n-1-i}, 3) &= 
		-(-1)^{n+1}\reg^\shuffle_{T=0} I(0; 0, \{1,0\}^n, 1, 0, 0; 1) \\
		& = -\reg^\shuffle_{T=0}\zeta(1,\{2\}^{n+1},1) \,.
		\end{align*}
		The second equality follows by applying duality.  The first equality holds by direct calculation since
		\begin{align*}
		& -(-1)^{n+1} \reg_{T=0}^\shuffle I(0; 0,\{1,0\}^{n},1,0,0 ; 1)  \\
		&= (-1)^{n} \Big[ \begin{aligned}[t] 
		I(0; \{1,0\}^{n},1,0,0 ; 1) \overbrace{\reg_{T=0}^\shuffle I(0;0;1)}^{=0} - 3I(0; \{1,0\}^{n},1,0,0,0;1) \\[-1ex]
		- 2 \sum_{i=0}^{n-1} I(0; \{1,0\}^i ,1,0,0, \{1,0\}^{n-1-i}, 1,0,0; 1) \Big] \end{aligned} \\
		&= 3\zeta(\{2\}^{n}, 4) + 2\sum_{i=0}^{n-1} \zeta(\{2\}^i,3,\{2\}^{n-1-i},3)
		\end{align*}
		With this in mind, the reduction for \( t(3,\{2\}^n,3) \) from \autoref{eqn:t3223:orig} can be written as
		\begin{equation}\label{eqn:t3223:new:withreg}
		\begin{aligned}
		t(3,\{2\}^n,3) = {}	& \frac{1}{2} \sum_{i=0}^n t(\{2\}^i, 3) t(\{2\}^{n-i}, 3) - \frac{ t(2) }{2^{2n+4}}\sum_{i=0}^{n-1} \zeta(\{2\}^i,3) \zeta(\{2\}^{n-1-i},3) \\
		& + \frac{ t(2) }{2^{2n+3}} \reg^\shuffle_{T=0} \zeta(1,\{2\}^{n+1},1) \,.
		\end{aligned}
		\end{equation}
		
		We now apply the following shuffle-antipode identity (reversing the integral string, cf. Equation (28) in \cite{galois})
		\[
		\sum_{i=0}^m (-1)^i I(0; w_1,w_2,\ldots,w_i; 1) I(0; w_m, w_{m-1}, \ldots, w_{i+1}; 1) = 0 \,,
		\]
		which implies that \( I(0; w_1,w_2,\ldots,w_m; 1) + (-1)^m I(0; w_m, w_{m-1}, \ldots, w_1; 1) = 0 \pmod{\mathrm{products}} \).  This identity applied to \( \reg_{T=0}^\shuffle \zeta(1,\{2\}^n,1) \) shows that
		\begin{equation}\label{eqn:z211:rev}
		\reg_{T=0}^\shuffle  \Big( \sum_{i=0}^n \zeta(1,\{2\}^i,1) \zeta(\{2\}^{n-i}) + \zeta(\{2\}^n, 1,1) - \sum_{i=0}^n \zeta(\{2\}^i,1) \zeta(1,\{2\}^{n-i}) \Big) = 0 \,,
		\end{equation}
		the main term of which gives \( \reg_{T=0}^\shuffle  \big( \zeta(1,\{2\}^n,1) + \zeta(\{2\}^n,1,1) \big) = 0 \pmod{\mathrm{products}} \).
		
		At this point we will find it convenient to reformulate everything in terms of generating series.  To this end, we introduce the following general notation.
		\begin{Def}[General generating series for \( \zeta \) and \( t \)]\label{def:Gnotation}
			We define the following notation representing the generating functions of MZV's, respectively M$t$V's, whose arguments have the form \( (\alpha, \{\beta\}^n, \gamma) \), for arbitrary \( \alpha, \beta, \gamma \), as follows.
			\begin{align*}
			G_{\alpha\{\beta\}\gamma}(u) &\coloneqq \sum_{n=0}^\infty \reg_{T=0}^\shuffle \zeta(\alpha, \{\beta\}^n, \gamma) u^{\abs{\alpha} + n\abs{\beta} + \gamma} \,, \\
			G^{t,\bullet}_{\alpha\{\beta\}\gamma}(u) &\coloneqq \sum_{n=0}^\infty \reg_{T=\log2}^\ast t^\bullet(\alpha, \{\beta\}^n, \gamma) u^{\abs{\alpha} + n\abs{\beta} + \gamma} \,,
			\end{align*}
			where \( \bullet \in \{ \emptyset, \star, \half, r \} \).  That is, \( G_{\alpha\{\beta\}\gamma}(u) \) is the generating function of the \emph{shuffle}-regularized MZV's \( \reg_{T=0}^\shuffle \zeta(\alpha, \{\beta\}^n, \gamma) \) weighted by the MZV weight, and \( G^{t,\bullet}_{\alpha\{\beta\}\gamma}(u) \) is the corresponding generating function of \emph{stuffle}-regularized (interpolated) M$t$V's \( \reg_{T=\log2}^\ast t^\bullet(\alpha, \{\beta\}^n, \gamma) \).
		\end{Def}
		It is also convenient to introduce the following similar generating series for MZV's and M$t$V's of arguments with two repeating blocks of 2's:
		\begin{align*}
		F(u,v) &= \sum_{a,b=0}^\infty (-1)^{a+b} \zeta(\{2\}^a, 3, \{2\}^b) u^{2a} v^{2b} \,,\\
		F^t(u,v) &= \sum_{a,b=0}^\infty (-1)^{a+b} t(\{2\}^a, 3, \{2\}^b) u^{2a} v^{2b} \,.
		\end{align*}
		We know from Zagier \cite{zagier2232}, that \( F(u,v) \) is given as follows (after slightly rewriting Zagier's expression)
		\begin{align*}
		F(u,v) & = \sum_{a,b\geq0}^\infty (-1)^{a+b} \zeta(\{2\}^a,3,\{2\}^b) u^{2a} v^{2b} \\
		& = \frac{\sin(\pi v)}{\pi u^2 v} \Big[ A(u+v) + A(u-v) - 2 A(v) \Big] - \frac{\sin(\pi u)}{\pi u^2 v} \Big[ B(u+v) - B(u-v) \Big],
		\end{align*}
		where \( A(z) = \sum_{r=1}^\infty \zeta(2r + 1) z^{2r} \) and \( B(z) = \sum_{r = 1}^\infty (1 - 2^{-2r}) \zeta(2r + 1) z^{2r} \).  Likewise, from Murakami \cite{murakami}, we know --- with the same series \( A(z) \) and \( B(z) \) --- that
		\begin{align*}
		F^t(u,v) & = \sum_{a,b\geq0}^\infty (-1)^{a+b} t(\{2\}^a,3,\{2\}^b) u^{2a} v^{2b} \\
		& = \frac{1}{2 u v} \cos\Big(\frac{\pi v}{2}\Big) \Big[ A\Big(\frac{u+v}{2}\Big) - A\Big(\frac{u-v}{2}\Big) \Big] + \frac{1}{2 u v} \cos\Big(\frac{\pi u}{2}\Big)  \Big[ B\Big(\frac{u+v}{2}\Big) - B\Big(\frac{u-v}{2}\Big) \Big] \,.
		\end{align*}
		The series \( G^t_{3\{2\}3}(u) \)  encapsulates the M$t$V's that we want to evaluate.  We note that by \autoref{eqn:t3223:new:withreg} it can expressed as follows:
		\begin{equation}\label{eqn:gt3223:identity}
		G^t_{3\{2\}3}(u) = \frac{1}{2} G^t_{\{2\}3}(u)^2 - t(2) u^2 G_{\{2\}3}(\tfrac{u}{2})^2 + 2 t(2) u^2 G_{1\{2\}1}(\tfrac{u}{2}) \,.
		\end{equation}
		By \autoref{eqn:z211:rev}, we have
		\[
		G_{1\{2\}1}(u) G_{\{2\}}(u) + G_{\{2\}11}(u) - G_{\{2\}1}(u) G_{1\{2\}}(u) = 0 \,,
		\]
		or equivalently
		\begin{equation}\label{eqn:gz121:identity}
		G_{1\{2\}1}(u) = \Big( G_{\{2\}1}(u) G_{1\{2\}}(u) - G_{\{2\}11}(u) \Big) G_{\{2\}}(u)^{-1} \,.
		\end{equation}
		(We abuse the notation for \( G_{\{2\}11} \) slightly, as \( 11 \) is treated as the string \( 1, 1 \) in the generating series.)
		We now recall the following standard result.
		\[
		G_{\{2\}}(u) = \sum_{n=0}^\infty \zeta(\{2\}^n) u^{2n} = \sum_{n=0}^\infty \frac{\pi^{2n}}{(2n+1)!} u^{2n} = \frac{\sinh(\pi u)}{\pi u} \,.
		\]
		Then by taking \( \lim_{v\to0} F(u,v) \) we find \[
		\ii G_{\{2\}3}(\ii u) = u^3 F(u, 0) = -2 u A(u) + \frac{2}{\pi} u \sin(\pi u) B'(u) \,,
		\]
		so that (recall \( A \) and \( B \) are even, so \( B' \) is odd)
		\[
		G_{\{2\}3}(u) = 2 u A(\ii u) + \frac{2 \ii}{\pi} u B'(\ii u) \sinh(\pi u) \,.
		\]
		Since \( \zeta(\{2\}^n,3) = \zeta(1,\{2\}^{n+1}) \) by duality, and \( \reg_{T=0}^\shuffle \zeta(1) = 0 \) gives no additional contribution, we immediately also have that
		\[
		G_{1\{2\}}(u) = G_{\{2\}3}(u) = 2 u A(\ii u) + \frac{2 \ii}{\pi} u B'(\ii u) \sinh(\pi u) \,.
		\]
		Likewise, we find, by taking \( \lim_{v\to0} F^t(u,v) \), that
		\[
		\ii G^t_{\{2\}3}(\ii u) = u^3 F^t(u, 0) = \frac{1}{2} u^2 A'\Big(\frac{u}{2}\Big) + \frac{1}{2} u^2 B'\Big(\frac{u}{2}\Big) \cos\Big( \frac{\pi u}{2} \Big) \,,
		\]
		so that
		\[
		G^t_{\{2\}3}(u) = -\frac{\ii}{2} u^2 A'\Big(\frac{\ii u}{2}\Big) - \frac{\ii}{2} u^2 B'\Big(\frac{\ii u}{2}\Big) \cosh\Big(\frac{\pi u}{2}\Big).
		\]
		
		Therefore we need only consider \( G_{\{2\}1}(t) \) and \( G_{\{2\}11}(t) \), both of which can be handled in a similar way.  Namely by duality and shuffle regularization we find
		\begin{align*}
		\reg_{T=0}^\shuffle \zeta(\{2\}^n,1) &= \reg^\shuffle_{T=0} (-1)^{n+1} I(0; 0, \{1,0\}^n; 1) = -2 \sum_{i=0}^{n-1} \zeta(\{2\}^i, 3, \{2\}^{n-1-i}) \\
		\reg_{T=0}^\shuffle \zeta(\{2\}^n,1,1) &= \reg^\shuffle_{T=0} (-1)^{n+2} I(0; 0, 0, \{1,0\}^n; 1) \\
		& = -3 \sum_{i=0}^{n-1} \zeta(\{2\}^i, 4, \{2\}^{n-1-i}) - 4 \sum_{i+j+k=n-2} \zeta(\{2\}^i, 3, \{2\}^j, 3, \{2\}^k)
		\end{align*}
		The first one can be expressed as $-$2 times the sum of all MZV's of weight \( w = 2n + 1 \), depth \( d = n \) and height \( h = n \), and hence evaluated via the Ohno-Zagier Theorem.  The latter consists of all weight \( w = 2n + 2 \), depth \( d = n \) and height \( h = n \) MZV's, although the coefficients are not constant, so does not evaluate immediately with the Ohno-Zagier Theorem, but can be tweaked to allow this.  (In both cases, the sums obtained are symmetric sums, and hence are rational polynomials in Riemann zeta values, though these may be difficult to evaluate explicitly.)  To this end, recall the Ohno-Zagier theorem.	
		\begin{Thm}[Ohno-Zagier, \cite{ohno-zagier}]
			The generating series obtained by summing all MZV's of fixed weight \( w \), height \( h \) and depth \( d \) is given by
			\begin{align*}
			\phi_0(x,y,z) &{} \coloneqq \sum_{\substack{w \geq d + h \\ h \leq d}} \Big\{ \sum_{\substack{\wt(\vec{k}) = w \\ \operatorname{dp}(\vec{k}) = d \\ \operatorname{ht}(\vec{k}) = h}} \zeta(\vec{k}) \Big\} x^{w-d-h} y^{d-h} z^{h-1} \\
			&= \frac{1}{xy - z} \Big\{ 1 - \exp\Big( \sum_{m=2}^\infty \frac{1}{m} \zeta(m) (x^m + y^m - \alpha^m - \beta^m ) \Big) \Big\} \,
			\end{align*}
			where
			\[
			\alpha, \beta = \frac{x + y \pm \sqrt{(x + y)^2 - 4 z}}{2} \,.
			\]
		\end{Thm}
		For the sum corresponding to \( \reg_{T=0}^\shuffle\zeta(\{2\}^n, 1) \), we find
		\begin{align*}
		\sum_{n=0}^\infty \Big( \sum_{i=0}^{n} \zeta(\{2\}^i, 3, \{2\}^{n-i}) \Big) z^n & {} = \frac{\partial\phi_0}{\partial x}(0, 0, z) \\
		& = \frac{1}{z} \exp\Big(\sum_{n=1}^\infty \frac{(-1)^{n-1}}{n} z^n \zeta(2n) \Big) \cdot \sum_{n=1}^\infty (-1)^{n-1} z^n \zeta(2n+1) \,.
		\end{align*}
		Whereas, for the following sum, related to that for \( \reg_{T=0}^\shuffle\zeta(\{2\}^n, 1, 1) \), we have
		\begin{align*}		
		& \sum_{n=0}^\infty \Big( \sum_{i+j+k=n-1} \zeta(\{2\}^i,3,\{2\}^j,3,\{2\}^k) + \sum_{i=0}^n \zeta(\{2\}^i,4,\{2\}^{n-i}) \Big) z^n \\
		&= \frac{1}{2} \frac{\partial^2 \phi_0}{\partial x^2}(0, 0, z)  = \frac{1}{2z} \exp\Big(\sum_{n=1}^\infty \frac{(-1)^{n-1}}{n} z^n \zeta(2n) \Big) \cdot \Big( \sum_{n=1}^\infty (-1)^{n-1} z^n \zeta(2n+1) \Big)^2 + \\ &\quad\quad\quad \frac{1}{2z} \exp\Big(\sum_{n=1}^\infty \frac{(-1)^{n-1}}{n} z^n \zeta(2n) \Big) \cdot  \sum_{n=1}^\infty (-1)^{n-1} (n+1) z^n \zeta(2n+2)  \,.
		\end{align*}
		In order to deal with the series appearing above, we make the following observations.  Firstly
		\[
		\sum_{n=1}^\infty \frac{(-1)^{n-1}}{n} z^n \zeta(2n) = -\log\big(\Gamma(1 + \ii \sqrt{z})\Gamma(1 - \ii \sqrt{z})\big) = \log\Big(\frac{\sinh(\pi \sqrt{z})}{\pi \sqrt{z}}\Big)\,,
		\]
		using the Taylor expansion
		\begin{equation}\label{eqn:taylorLogGamma}
		\log \Gamma(1 - z) = \gamma z +  \sum_{k=2}^\infty \frac{1}{k} \zeta(k) z^k \,.
		\end{equation}
		along with the reflection formula \( \Gamma(1-z)\Gamma(1+z) = \pi z \csc(\pi z) \) for the gamma function.  Moreover
		\[
		\sum_{n=1}^\infty (-1)^{n-1} z^n \zeta(2n+1) = -A(\ii \sqrt{z}) \, ,
		\]
		so
		\begin{align}
		& \sum_{n=0}^\infty \Big( \sum_{i=0}^{n} \zeta(\{2\}^i, 3, \{2\}^{n-i}) \Big) z^n  = -\frac{A(\ii \sqrt{z}) \sinh(\pi \sqrt{z})}{\pi z \sqrt{z}} \,, \label{eqn:232sum} \\
		& \sum_{n=0}^\infty \Big( \sum_{i+j+k=n-1} \zeta(\{2\}^i,3,\{2\}^j,3,\{2\}^k) + \sum_{i=0}^n \zeta(\{2\}^i,4,\{2\}^{n-i}) \Big) z^n \notag \\
		& = \frac{\sinh(\pi \sqrt{z})}{2 \pi z \sqrt{z}} \Big( A(\ii \sqrt{z})^2 + \zeta(2) - 2 t(2) \frac{\coth(\pi \sqrt{z})}{\pi \sqrt{z}} + 2 t(2) \operatorname{csch}(\pi \sqrt{z})^2   \Big) \label{eqn:2323and242sum} .
		\end{align}
		The second through fourth terms in the second identity come from explicitly summing
		\[
		\sum_{n=1}^\infty (-1)^{n-1} (n+1) \zeta(2n+2) z^n \,.
		\]
		
		On the other hand, we first have the following identity (with \( \widetilde{\shuffle} \) denoting the shuffle of the argument strings, rather than the shuffle product of the associated MZV's):
		\[
		\zeta(\{2\}^a \widetilde{\shuffle} \{4\}) = \sum_{i=0}^{a} \zeta(\{2\}^i, 4, \{2\}^{a-i}) = \frac{8 \pi^{2a + 4}}{(2a + 6)!} \binom{a + 3}{a} \,.
		\]
		This follows by simplifying the result
		\[
		\zeta(\{2\}^a \widetilde{\shuffle} \{4\}) = \zeta(\{2\}^{a+1}) \ast \zeta(\{2\}) - (a + 2) \zeta(\{2\}^{a+2}) = \frac{\pi^{2a+2}}{(2a + 3)!} \cdot \frac{\pi^2}{6} - (a+2) \frac{\pi^{2a+4}}{(2a + 5)!} \,,
		\]
		where \( \ast \) denotes stuffle product, or alternatively by taking $k=n-1=a+1$ in \cite[Theorem 1.4]{hoffman17}.	Then
		\begin{equation} \label{eqn:242sum}
		\begin{aligned}
		\sum_{n=0}^\infty \Big( \sum_{i=0}^{n} \zeta(\{2\}^i, 4, \{2\}^{n-i}) \Big) z^n
		& = \sum_{n=0}^\infty \frac{8 \pi^{2n + 4}}{(2n + 6)!} \binom{n + 3}{n} z^n  \\
		& = -\frac{\cosh(\pi \sqrt{z})}{2 z^2} + \frac{\sinh(\pi \sqrt{z})}{2 \pi z^{5/2}} + \zeta(2) \frac{\sinh(\pi \sqrt{z})}{\pi z^{3/2}} .
		\end{aligned}
		\end{equation}
		With appropriate changes of variables, and combining the above series, we find the following.  From $-2$ times \autoref{eqn:232sum} we have
		\begin{align*}
		G_{\{2\}1}(u) = \frac{2}{\pi} A(\ii u) \sinh(\pi u) ,
		\end{align*}
		whereas from $-3$ times \autoref{eqn:2323and242sum} minus \autoref{eqn:242sum} we have
		\begin{align*}
		G_{\{2\}11}(u) = \frac{1}{2} \frac{\pi u}{\sinh(\pi u)} - \frac{1}{2} \frac{\sinh(\pi u)}{\pi u} + \zeta(2) u^2 \frac{\sinh(\pi u)}{\pi u} + 2u^2 A(\ii u)^2 \frac{\sinh(\pi u)}{\pi u} \,.
		\end{align*}
		Substituting these various results into \autoref{eqn:gz121:identity} we find (after some straightforward simplification) that
		\begin{align*}
		G_{1\{2\}1}(u) &= \Big( G_{\{2\}11}(u) - G_{\{2\}1}(u) G_{1\{2\}}(u) \Big) \cdot G_{\{2\}}(u)^{-1} \\
		&= \frac{1}{2} - \zeta(2) u^2 + 2 u^2 A(\ii u)^2 - 3\zeta(2) \frac{u^2}{\sinh^2(\pi u)} - 4 \ii u^3 A(\ii u) B'(\ii u) \frac{\sinh(\pi u)}{\pi u} \,.
		\end{align*}
		Now substitute this into \autoref{eqn:gt3223:identity}, and after some further straightforward simplification, we find
		\begin{align*}
		G^{t}_{3\{2\}3}(u) = {} & t(2)u^2 - t(4)u^4 - \frac{u^4}{8} A'\Big(\frac{\ii u}{2}\Big)^2 - \frac{u^4}{8} B'\Big(\frac{\ii u}{2}\Big)^2  \\
		& - \frac{u^4}{4} A'\Big(\frac{\ii u}{2}\Big) B'\Big(\frac{\ii u}{2}\Big) \cosh\Big(\frac{\pi u }{2}\Big) - 2t(2)^2 u^4 \csch^2\Big(\frac{\pi u}{2}\Big) \,.
		\end{align*}
		
		It is now a straightforward exercise to extract the evaluation given in the statement of \autoref{thm:t3223:eval} at the start of the section.  And so \autoref{thm:t3223:eval} is proven.  
	\end{proof}
	
	\subsubsection{\texorpdfstring{Evaluation of \( t^\star(3,\{2\}^n,3)\)}
		{Evaluation of t\textasciicircum{}*(3,\{2\}\textasciicircum{}n,3)}}
	\label{sec:tstar3223}
	By applying \( t \circ \Sigma^r \) to \autoref{eqn:stuffleantipode} from the proof of \autoref{thm:dep}, setting \( r = 1\), and recombining the last \( k-1 \) products via \autoref{eqn:stuffleantipode} in reverse, we find that
	\[
	t^\star(3,\{2\}^n,3) = -(-1)^n t(3,\{2\}^n,3) + \sum_{j=0}^n (-1)^j t(\{2\}^j,3) t^\star(\{2\}^{n-j},3).
	\]
	Hence, in terms of generating series, we find
	\[
	G^{t,\star}_{3\{2\}3}(u) = G^{t}_{3\{2\}3}(\ii u ) + \ii G^t_{\{2\}3}(\ii u) G^{t,\star}_{\{2\}3}(u) \,.
	\]
	We have the results that
	\begin{align*}
	G^t_{\{2\}3}(u) &= -\frac{\ii}{2} u^2 A'\Big(\frac{\ii u}{2}\Big) - \frac{\ii}{2} u^2 B'\Big(\frac{\ii u}{2}\Big) \cosh\Big(\frac{\pi u}{2}\Big) \\
	G^{t,\star}_{\{2\}3}(u) &= \ii G^{t}_{3\{2\}}(\ii u) G^{t,\star}_{\{2\}}(u) =  \frac{1}{2} u^2 A'\Big(\frac{u }{2}\Big) - \frac{1}{2} u^2 B'\Big(\frac{\ii u}{2}\Big) \sec\Big(\frac{\pi u}{2}\Big) \,,
	\end{align*}
	the second of which follows from similar consideration, relating \( G^{t,\star}_{\{2\}3}(u), G^t_{3\{2\}}(u) \) and \( G^{t,\star}_{\{2\}}(u) \) or \( G^{t}_{\{2\}}(u) \) via Theorem \ref{thm:dep}; a formula for \( G^{t}_{3\{2\}}(u) \) follows from \( \lim_{u\to0} F^t(u,v) \), while one sees that \( G^{t,\star}_{\{2\}}(u) G^t_{\{2\}}(\ii u) = 1 \) and \( G^t_{\{2\}}(u) = \cosh(\tfrac{\pi u}{2}) \) according to Equation 3.5 in \cite{hoffman19}.  Using these results we obtain the following generating series expression, after some simplification using the fact that \( A' \) and \( B' \) are odd functions.
	\begin{align*}
	G^{t,\star}_{3\{2\}3}(u) = & -t(2) u^2  - t(4) u^4 + \frac{u^4}{8} A'\Big(\frac{u}{2}\Big)^2 + \frac{u^4}{8} B'\Big(\frac{u}{2}\Big)^2 \\
	& \quad\quad + \frac{u^4}{4} A'\Big(\frac{u}{2}\Big) B'\Big(\frac{u}{2}\Big) \sec\Big(\frac{\pi u }{2}\Big) + 2 t(2)^2 u^4 \csc^2\Big(\frac{\pi u}{2}\Big).
	\end{align*}
	
	Recall \( E_n \) is the Euler number, defined via
	\begin{equation}
	\frac{1}{\cosh(z)} = \frac{2}{e^z + e^{-z}} = \sum_{n=0}^\infty \frac{E_n}{n!} z^n \,.
	\label{eqn:eulerno}
	\end{equation}
	It is then, as before, a routine exercise to extract the following explicit evaluation from the above generating series.  
	
	\begin{Thm}[Evaluation of \( t^\star(3,\{2\}^n,3) \)]
		We have following evaluation of \( t^\star(3,\{2\}^n,3) \).
		\begin{align*}
		t^\star(3,\{2\}^n,3) = {} & \Big(\frac{1}{4}\Big)^{n+2} \Big\{ \frac{9+6n}{2} \zeta(2) \zeta(2n+4) \\ 
		& \quad\quad + \sum_{\substack{r+s+q=n+2 \\ r,s,q\geq 1}} 2 r s (2 - 2^{-2r} - 2^{-2s}) \zeta(2r + 1)\zeta(2s+1) \cdot (-1)^q \frac{E_{2q} \pi^{2q}}{(2q)!} \\
		& \quad\quad + \sum_{\substack{r + s = n+2 \\ r,s \geq 1}} 2 r s (2 - 2^{-2r})(2 - 2^{-2s}) \zeta(2r+1) \zeta(2s+1) \Big\} \,,
		\end{align*}
		where \( E_n \) is defined via \autoref{eqn:eulerno}.
	\end{Thm}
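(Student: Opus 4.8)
The generating series \( G^{t,\star}_{3\{2\}3}(u) \) has just been determined, so by \autoref{def:Gnotation} the desired value \( t^\star(3,\{2\}^n,3) \) (which is convergent, so the stuffle regularization is vacuous here) is simply the coefficient of \( u^{2n+6} \) in that series. The plan is therefore to expand each of the six summands of \( G^{t,\star}_{3\{2\}3}(u) \) as an even power series in \( u \), read off \( [u^{2n+6}] \), and match the result against the three groups of terms in the statement. Throughout, the scalings \( (\tfrac{u}{2})^{2k} = 4^{-k} u^{2k} \) are what produce the common prefactor \( \big(\tfrac14\big)^{n+2} \).

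The expansions needed are all elementary. Differentiating the defining series gives \( A'(z) = \sum_{r\geq 1} 2r\,\zeta(2r+1) z^{2r-1} \) and \( B'(z) = \sum_{r\geq 1} 2r(1-2^{-2r})\zeta(2r+1) z^{2r-1} \), so each of \( A'(\tfrac{u}{2})^2 \), \( B'(\tfrac{u}{2})^2 \) and \( A'(\tfrac{u}{2})B'(\tfrac{u}{2}) \) yields a sum over \( r+s = \text{const} \) of products \( \zeta(2r+1)\zeta(2s+1) \) with explicit coefficients. For the secant factor I would use \( \cos z = \cosh(\ii z) \) together with \autoref{eqn:eulerno} to write \( \sec(\tfrac{\pi u}{2}) = \sum_{q\geq 0} \frac{(-1)^q E_{2q}\pi^{2q}}{(2q)!}(\tfrac{u}{2})^{2q} \); this is precisely the source of the factor \( (-1)^q E_{2q}\pi^{2q}/(2q)! \) in the middle sum. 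Finally, differentiating the cotangent series \( \pi x \cot(\pi x) = 1 - 2\sum_{N\geq 1}\zeta(2N) x^{2N} \) gives \( \csc^2(\tfrac{\pi u}{2}) = \tfrac{4}{\pi^2 u^2} + \tfrac{2}{\pi^2}\sum_{N\geq 1}(2N-1)\zeta(2N)\,4^{1-N} u^{2N-2} \).

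With these in hand, the three groups match as follows. The term \( \tfrac{u^4}{4}A'(\tfrac{u}{2})B'(\tfrac{u}{2})\sec(\tfrac{\pi u}{2}) \), restricted to \( q\geq 1 \), produces the middle (Euler-number) sum once the non-symmetric coefficient \( 4rs(1-2^{-2s}) \) coming from \( A'B' \) is symmetrized under \( (r,s)\mapsto(s,r) \) into \( 2rs(2-2^{-2r}-2^{-2s}) \). The first line, \( \tfrac{9+6n}{2}\zeta(2)\zeta(2n+4) \), arises entirely from \( -t(2)u^2 - t(4)u^4 + 2t(2)^2 u^4\csc^2(\tfrac{\pi u}{2}) \): using \( t(2) = \tfrac{\pi^2}{8} = \tfrac34\zeta(2) \) and \( t(4) = \tfrac{\pi^4}{96} \), the pole \( \tfrac{4}{\pi^2 u^2} \) of \( \csc^2 \) cancels \( -t(2)u^2 \) and the \( N=1 \) term cancels \( -t(4)u^4 \), while \( N = n+2 \) supplies the stated coefficient. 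The double sum with \( (2-2^{-2r})(2-2^{-2s}) \) comes from \( \tfrac{u^4}{8}\big(A'(\tfrac u2)^2 + B'(\tfrac u2)^2\big) \) together with the \( q=0 \) part of the secant term; after symmetrization these combine as \( 2rs\big(1 + (1-2^{-2r})(1-2^{-2s}) + (2-2^{-2r}-2^{-2s})\big) = 2rs(2-2^{-2r})(2-2^{-2s}) \).

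The only real obstacle is organizational: keeping careful track of the uniform prefactor \( \big(\tfrac14\big)^{n+2} \) and verifying the two exact cancellations in the \( \csc^2 \)-piece (the pole against \( t(2) \) and the \( N=1 \) term against \( t(4) \)). Once these are checked, extracting \( [u^{2n+6}] \) from the remaining three groups reproduces the claimed formula verbatim, with the Euler numbers entering solely through \( \sec(\tfrac{\pi u}{2}) \); no input beyond the already-established generating-series identity is required. This is exactly the ``routine exercise'' referred to above.
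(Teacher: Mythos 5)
Your proposal is correct and is precisely the computation the paper has in mind: the paper derives the closed form for \( G^{t,\star}_{3\{2\}3}(u) \) and then declares the coefficient extraction ``a routine exercise,'' which is exactly what you carry out (and your key checks --- the cancellation of \( -t(2)u^2 \) and \( -t(4)u^4 \) against the pole and \( N=1 \) term of the \( \csc^2 \) piece, the symmetrization of \( 4rs(1-2^{-2s}) \) into \( 2rs(2-2^{-2r}-2^{-2s}) \), and the recombination \( 1+(1-2^{-2r})(1-2^{-2s})+(2-2^{-2r}-2^{-2s})=(2-2^{-2r})(2-2^{-2s}) \) --- all verify). No divergence from the paper's approach; you have simply supplied the details it omits.
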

	
	\subsection{\texorpdfstring{Evaluations of \( t(1,\{2\}^n,1) \), \( t(1, \{-1\}^n, 1) \), and their \( t^\star \) counterparts}
		{Evaluations of t(1,\{2\}\textasciicircum{}n,1), t(1, \{-1\}\textasciicircum{}n, 1), and their t\textasciicircum{}* counterparts}}
	\label{sec:t1221andt1mm1}
	
	\subsubsection{\texorpdfstring{Evaluation of \( t(1,\{2\}^n,1) \)}
		{Evaluation of t(1,\{2\}\textasciicircum{}n,1)}}\label{sec:t1221}
	
	Extracting the coefficient of \( y_2\cdots y_{n+1} \) in \autoref{eqn:tsymphi0gs:fort3223} (i.e., the Symmetry Theorem \autoref{thm:symgsfull}, with \( \vec{\phi} = \vec{0} \)), we find the following
	\[
	\reg^\ast_{T=\log2} 2t(1, \{2\}^n, 1) = \begin{aligned}[t]
	& \frac{\delta_{n=0}}{2!}\Big(\frac{\ii\pi}{2}\Big)^2 + \sum_{i=0}^n \reg^\ast_{T=\log2} \big( t(\{2\}^i, 1) t(\{2\}^{n-i}, 1) \big) \\
	& - \frac{1}{2^{2n-1}} t(2) \sum_{i=0}^{n-1} \reg^\ast_{T=0} \big( \zeta(\{2\}^i, 1) \zeta(\{2\}^{n-1-i},1) \big) \,.
	\end{aligned}
	\]
	Recall from \autoref{def:Gnotation} that \( G^{t,\bullet}_{\alpha\{\beta\}\gamma}(u) \) is our general notation for the generating function of \emph{stuffle}-regularized M$t$V's of the form \( \reg_{T=\log2}^\ast t^\bullet(\alpha, \{\beta\}^n, \gamma) u^{\abs{\alpha} + n\abs{\beta} + \abs{\gamma}} \), whereas with the zeta generating series \( G_{\alpha\{\beta\}\gamma}(u) \) it was more convenient to utilize the shuffle regularization.  Since there is a single trailing 1, we know (see \cite[Theorem 1]{ikz}) that 
	\[
	\reg_{T=0}^\ast \zeta(\{2\}^n,1) = \reg_{T=0}^\shuffle \zeta(\{2\}^n, 1) \,,
	\]
	so we can evaluate this using the generating series 
	\[
	G_{\{2\}1}(u) = \frac{2}{\pi} A(\ii u) \sinh(\pi u) \,,
	\] from above.   
	Hence in terms of generating series, we have
	\[
	G^t_{1\{2\}1}(u) = -\frac{\pi^2}{8} +  \big( G^t_{\{2\}1}(u)\big)^2 - 2u^2 t(2) \big( G_{\{2\}1}(\tfrac{u}{2}) \big)^2 \,.
	\]
	From \cite[Theorem 3.3]{charltont2212}, we have that
	\begin{equation}\label{eqn:t2212:eval}
	\begin{aligned}
	\sum_{a,b\geq0} (-1)^{a+b} & \reg_{T=\log2} t(\{2\}^a,1,\{2\}^b) \cdot (2x)^{2a} (2y)^{2b} = {} \\
	& \frac{1}{2} \cos(\pi x) (A(x-y) + A(x+y)) 
	+ \frac{1}{2} \cos(\pi y) (B(x-y) + B(x+y) + 2 \log2) \,,
	\end{aligned}
	\end{equation}
	so that by taking \( y \to 0 \) we find
	\begin{align*}
	G^t_{\{2\}1}(u) = u \cosh\Big( \frac{\pi u}{2} \Big) A\Big( \frac{\ii u}{2} \Big) + u B\Big(\frac{\ii u}2\Big)+u\log2 .
	\end{align*}
	Hence we have the following identity for the generating series of \( \reg_{T=\log2} t(1,\{2\}^n,1) \):
	\begin{align*}
	& G_{1\{2\}1}^t(u) =  -\frac{\pi^2 u^2}{16} + \frac{1}{2} \Big(\log2 + A\Big( \frac{\ii u}{2} \Big) + B\Big( \frac{\ii u}{2} \Big) \cosh\Big( \frac{\pi u}{2} \Big)\Big)^2 - \frac{1}{2} A\Big( \frac{\ii u}{2} \Big)^2 \sinh\Big( \frac{\pi u}{2} \Big) \,.
	\end{align*}
	This establishes that \( \reg_{T=\log2} t(1,\{2\}^n,1) \) is a polynomial in single Riemann zeta values and \( \log2 \), and an explicit formula for it can be extracted easily from this generating series.  The general regularization (for \( n > 0 \)) can be recovered as
	\[
	\reg_{T} t(1,\{2\}^n,1) = \reg_{T=\log2} t(1,\{2\}^n) (T-\log2) + \reg_{T=\log2} t(1,\{2\}^n,1) \,,
	\]
	which is now a polynomial in Riemann zeta values, \( \log2 \), and the regularization parameter \( T \), on account of the evaluation
	\[
	\reg_{T=\log2} G_{1\{2\}}^t(u) = u A\Big( \frac{\ii u}{2} \Big) + u \cosh\Big( \frac{\pi u}{2} \Big) \Big( B\Big( \frac{\ii u}{2} \Big) + \log2 \Big) \,,
	\]
	which follows from \autoref{eqn:t2212:eval} by taking \( x \to 0 \).
	
	\subsubsection{\texorpdfstring{Evaluation of \( t^\star(1,\{2\}^n,1) \)}
		{Evaluation of t\textasciicircum{}*(1,\{2\}\textasciicircum{}n,1)}}\label{sec:ts1221}
	Now using the same argument as in \autoref{sec:tstar3223}, we find
	\[
	\reg_{T=\log2} t^\star(1,\{2\}^n,1) = \reg_{T=\log2} \big( -(-1)^n t(1,\{2\}^n,1) + \sum_{j=0}^n (-1)^j t(\{2\}^j,1) t^\star(\{2\}^{n-j},1) \big) \,,
	\]
	from the shuffle-antipode in \autoref{thm:dep}.  We therefore have the generating series identity
	\[
	G^{t,\star}_{1\{2\}1}(u) =  \big( G^{t}_{1\{2\}1}(\ii u ) - \ii G^t_{\{2\}1}(\ii u) G^{t,\star}_{\{2\}1}(u) \big) \,.
	\]
	Likewise
	\begin{align*}
	G^{t,\star}_{\{2\}1}(u) & {} = -  \ii  G^{t}_{1\{2\}}(\ii u) G^{t,\star}_{\{2\}}(u) \\
	& = u B\Big( \frac{u}{2} \Big) + u \log2 + u  A\Big(\frac{u}{2}\Big) \sec\Big(\frac{u\pi}{2}\Big) \,.
	\end{align*}
	So we find
	\begin{align*}
	G^{t,\star}_{1\{2\}1}(u) = {}
	& \frac{\pi^2u^2}{16} + \frac{u^2}{2} \log^2 2 + \frac{u^2}{2} B\Big( \frac{u}{2} \Big)  \Big( B\Big( \frac{u}{2} \Big) + 2 \log2  \Big) + \\
	& {} + \frac{u^2}{2} A\Big( \frac{u}{2} \Big)  \Big( A\Big( \frac{u}{2} \Big)  + 2 B\Big( \frac{u}{2} \Big)  \sec\Big( \frac{u\pi}{2} \Big)  + 2 \log2 \sec\Big( \frac{u\pi}{2} \Big) \Big) \,.
	\end{align*}
	Hence \( \reg_{T=\log2} t^\star(1,\{2\}^n,1) \) is a polynomial in single zeta values and \( \log2 \).  The general regularization can be recovered using the stuffle product of \( t^\star \)-values, giving (for \( n > 0 \))
	\[
	\reg_{T} t^\star(1,\{2\}^n,1) = t^\star(1,\{2\}^n) (T - \log2) + \reg_{T=\log2} t^\star(1,\{2\}^n,1) \,,
	\]
	which is then a polynomial in Riemann zeta values, \( \log2 \), and \( T \).  (That \( t^\star(1,\{2\}^n) \), and generally \( t^\star(\{2\}^a, 1, \{2\}^b) \), is a polynomial in single zeta values and \( \log2 \) follows from the stuffle antipode, as in \autoref{sec:tstar3223}.)
	
	\subsubsection{\texorpdfstring{Evaluation of \( t(1, \{\overline{1}\}^n, 1) \)}
		{Evaluation of t(1, \{-1\}\textasciicircum{}n, 1)}}
	\label{sec:t1mm1}
	
	Recall the notation \( \overline{k} \) means the sign of argument \( k \) is \( \eps = -1 \) in the framework of alternating MZV's and M$t$V's.  From the case \( \vec{\phi} = (0, \{\frac{1}{2}\}^n, 0) \) of \autoref{thm:symgsfull}, we can extract the following identity:
	\begin{equation}\label{eqn:t1mm1}
	\begin{aligned}
	& \reg_{T=\log2} 2t(1,\{\overline{1}\}^{n},1) =  -\delta_{n=0} t(2) + \sum_{i=0}^{n} \reg_{T=\log2} t(\{\overline{1}\}^i, 1) t(\{\overline{1}\}^{n-i},1) \\
	& \quad - \reg_{T=0} \frac{\pi}{2^{n+1}} \Big( (1 - (-1)^n ) \zeta(\{\overline{1}\}^{n}, 1) - \sum_{i=0}^{n-1} (-1)^i \zeta(\{\overline{1}\}^i, 1) \zeta(\{\overline{1}\}^{n-1 - i}, 1) \Big) \,.
	\end{aligned}
	\end{equation}
	From \cite{charltontmm1m}, we know that \( t(\{\overline{1}\}^n, 1) \) is a polynomial in Riemann zeta values, Dirichlet beta values (recall \( \beta(n) \coloneqq \sum_{k=0}^\infty \frac{(-1)^k}{(2k+1)^n} = -t(\overline{n}) \)), and \( \log2 \).  This follows from the more general generating series identity for \( t(\{\overline{1}\}^a, 1, \{\overline{1}\}^b) \) given in Theorem 1.1 \cite{charltontmm1m}.  Setting \( y = 0 \), \( x = -u \) therein leads to the following explicit result verifying this,
	\begin{equation*}
	\begin{aligned}
	G^t_{\{\overline{1}\}1}(u) = {} & \frac{u}{2} \Big( \cos\Big(\frac{\pi u}{4}\Big) - \sin\Big(\frac{\pi u}{4}\Big) \Big)  \Big( 2 A\Big(\frac{u}{2}\Big) + \log2 \Big) \\
	& +  \frac{u}{2} \Big( {-} A\Big(\frac{u}{8}\Big) + A\Big(\frac{u}{4}\Big) + 2  C\Big(\frac{u}{2}\Big) + \log2 \Big) \,,
	\end{aligned}
	\end{equation*}
	where
	\begin{align*}
	C(z) &\coloneqq \frac{1}{8} \big( \psi(\tfrac{1}{4} + \tfrac{z}{4}) - \psi(\tfrac{1}{4} - \tfrac{z}{4}) - \psi(\tfrac{3}{4} + \tfrac{z}{4}) + \psi(\tfrac{3}{4} - \tfrac{z}{4}) \big) = \sum_{r=1}^\infty \beta(2r) z^{2r-1} \,.
	\end{align*}
	
	One further ingredient we need is an evaluation for \( \zeta(\{\overline{1}\}^n, 1) \) as a polynomial in single zeta values and \( \log2 \). 
	
	\begin{Prop}[\( \reg_{T=0} \zeta(\{\overline{1}\}^m, 1) \) evaluation]\label{prop:zetamm1}
		The following regularized generating series evaluation holds.
		\begin{equation}
		\begin{aligned}
		& L(x) \coloneqq \sum_{r=0}^\infty \reg_{T=0} (-1)^r \zeta(\{\overline{1}\}^r, 1) x^r \\
		& {} = \frac{1}{x} - \frac{\Gamma(\tfrac{1}{2})}{\Gamma(1 - \tfrac{x}{2}) \Gamma(\frac{1+x}{2})} \Big( {-} \log2 - 2A(x) + \pi \Big( \cot\Big(\frac{\pi x}{2} \Big) - \cot(\pi x) \Big) \Big) \,,
		\end{aligned}
		\end{equation}
		where
		\[
		A(z) = \psi(1) - \frac{1}{2} (\psi(1+z) + \psi(1-z)) = \sum_{r=1}^\infty \zeta(2r+1) z^{2r} \,,
		\]
		with \( \psi(x) = \frac{\dd }{\dd x} \log\Gamma(x) \) the logarithmic derivative of the gamma function \( \Gamma(x) \).
		\begin{proof}
			We begin by considering the multiple polylog generating series
			\begin{align*}
			K(x;z) &= \sum_{r=0}^\infty (-1)^r \Li_{\{1\}^{r+1}}(\{-1\}^r,z) x^r
			= \sum_{r=0}^\infty \prod_{k < r} \Big( 1 - \frac{(-1)^k x}{k} \Big) \frac{z^r}{r}
			\end{align*}
			Splitting into the odd- and even-indexed terms, and rewriting each via Pochhammer symbols gives
			\begin{align*}
			(r = 2m+2) \quad\quad \prod_{k < r} \Big( 1 - \frac{(-1)^k x}{k} \Big) \frac{z^r}{r} &=  -\frac{1}{x} \cdot \frac{
				\poch{-\tfrac{x}{2}}{m+1}
				\poch{\tfrac{1+x}{2}}{m+1}
			}{
				\poch{\tfrac{1}{2}}{m+1}
			} 
			\frac{z^{2m+2}}{(m+1)!} \,, \\
			(r = 2m+1) \quad\quad \prod_{k < r} \Big( 1 - \frac{(-1)^k x}{k} \Big) \frac{z^r}{r} & = z \cdot \frac{
				\poch{1-\tfrac{x}{2}}{m}
				\poch{\tfrac{1+x}{2}}{m}
			}{
				\poch{\tfrac{3}{2}}{m}
			}
			\frac{z^{2m}}{m!}
			\end{align*}
			Each summation runs from \( m = 0 \), and gives a \( {}_2F_1 \) hypergeometric series (up to an additive constant in the former case), so that
			\[
			K(x;z) = \frac{1}{x} - \frac{1}{x} \cdot  \pFq{2}{1}{-\tfrac{x}{2},\tfrac{1+x}{2}}{\tfrac{1}{2}}{z^2} + z \cdot \pFq{2}{1}{1-\tfrac{x}{2},\tfrac{1+x}{2}}{\tfrac{3}{2}}{z^2} \,.
			\]
			Now notice that
			\begin{align*}
			\Li_{\{1\}^{r+1}}(\{-1\}^r, z) = {} &  \Li_1(z) \Li_{\{1\}^r}(\{-1\}^r) - \sum_{i=0}^{r-1} \Li_{\{1\}^{r+1}}(\{-1\}^i, z, \{-1\}^{r-i}) \\[-1.5ex]
			& - \sum_{i=0}^{r-1} \Li_{\{1\}^{i},2,\{1\}^{r-1-i}}(\{-1\}^i, -z, \{-1\}^{r-1-i}) \,,
			\end{align*}
			so that on rearranging and taking the generating series of both sides, we find
			\begin{align*}
			& \lim_{z\to1^-} \Bigg( \sum_{r=0}^\infty (-1)^r \Li_{\{1\}^{r+1}}(\{-1\}^r, z) x^r + \log(1-z) \bigg( \frac{\Gamma(\tfrac{1}{2})}{\Gamma(1 - \tfrac{x}{2})\Gamma(\tfrac{1+x}{2})} \bigg) \! \Bigg) \\
			& = \sum_{r=0}^\infty (-1)^r \reg_{T=0} \zeta(\{\overline{1}\}^r,1) x^r
			\end{align*}
			as \( z \to 1^- \).  Here we have used both that \( \Li_1(z) = -\log(1-z) \), and that
			\[
			\sum_{r=0}^\infty (-1)^r \zeta(\{\overline{1}\}^r) x^r = \frac{\Gamma(\tfrac{1}{2})}{\Gamma(1 - \tfrac{x}{2})\Gamma(\tfrac{1+x}{2})} \,.
			\]
			(For the latter, see Equation 13 in \cite{kfold} or Equation 12 in \cite{resolution}.)  That is to say
			\begin{align*}
			L(x) = \lim_{z\to1^-} K(x;z) + \log(1-z) \bigg( \frac{\Gamma(\tfrac{1}{2})}{\Gamma(1 - \tfrac{x}{2})\Gamma(\tfrac{1+x}{2})} \bigg)
			\end{align*}
			so once we evaluate the limit, we will find an expression for the desired generating series.
			
			The Ramanujan asymptotic for 0-balanced \( {}_2F_1 \)'s says that
			\[
			\frac{\Gamma(a)\Gamma(b)}{\Gamma(a+b)} \pFq{2}{1}{a,b}{a+b}{z} = -\log(1-z) - 2\gamma - \psi(a) - \psi(b) + O\big((1-z)\log(1-z)\big) \,,
			\]
			as \( z \to 1^- \) (see Corollary 20 \cite{ramanujan}).  We use this to evaluate the limit for \( L(x) \), and after some simplification we find
			\[
			L(x) = \frac{1}{x} - \frac{\Gamma(\tfrac{1}{2})}{\Gamma(1 - \tfrac{x}{2}) \Gamma(\frac{1+x}{2})} \Big( {-}\log2 - 2A(x) + \pi \Big( \cot\Big(\frac{\pi x}{2} \Big) - \cot(\pi x) \Big) \! \Big) \,,
			\]
			as claimed.
		\end{proof}
	\end{Prop}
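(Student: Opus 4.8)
The plan is to realise \( L(x) \) as a limit of a \emph{convergent} multiple polylogarithm generating series, and then extract the regularized value by matching logarithmic divergences as \( z \to 1^- \). First I would introduce the auxiliary generating series
\[
K(x;z) = \sum_{r=0}^\infty (-1)^r \Li_{\{1\}^{r+1}}(\{-1\}^r, z)\, x^r \,,
\]
which for \( \abs{z} < 1 \) converges and, directly from the series definition of the multiple polylogarithm, can be written as
\[
K(x;z) = \sum_{r=0}^\infty \prod_{k<r} \Big( 1 - \frac{(-1)^k x}{k} \Big) \frac{z^r}{r} \,.
\]
The key observation is that, splitting this sum according to the parity of \( r \) and rewriting the resulting finite products via Pochhammer symbols \( \poch{a}{m} \), each piece becomes --- up to an additive constant and an overall factor of \( z \) --- a Gauss \( {}_2F_1 \) hypergeometric series, yielding the closed form
\[
K(x;z) = \frac{1}{x} - \frac{1}{x}\, \pFq{2}{1}{-\tfrac{x}{2}, \tfrac{1+x}{2}}{\tfrac{1}{2}}{z^2} + z \cdot \pFq{2}{1}{1-\tfrac{x}{2}, \tfrac{1+x}{2}}{\tfrac{3}{2}}{z^2} \,.
\]

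Next I would connect \( K(x;z) \) to the regularized object \( L(x) \). Applying the stuffle product to \( \Li_1(z)\, \Li_{\{1\}^r}(\{-1\}^r) \) expresses \( \Li_{\{1\}^{r+1}}(\{-1\}^r, z) \) as this product minus the remaining stuffle terms, all of which converge as \( z \to 1^- \). Passing to generating series and using \( \Li_1(z) = -\log(1-z) \) together with the known evaluation
\[
\sum_{r=0}^\infty (-1)^r \zeta(\{\overline{1}\}^r)\, x^r = \frac{\Gamma(\tfrac{1}{2})}{\Gamma(1 - \tfrac{x}{2})\Gamma(\tfrac{1+x}{2})}
\]
shows that the divergent \( \log(1-z) \) contribution of \( \Li_1(z) \) is precisely the correction term to be absorbed into the stuffle regularization, leaving
\[
L(x) = \lim_{z\to1^-} \Big( K(x;z) + \log(1-z) \cdot \frac{\Gamma(\tfrac{1}{2})}{\Gamma(1 - \tfrac{x}{2})\Gamma(\tfrac{1+x}{2})} \Big) \,.
\]

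The main obstacle is the evaluation of this \( z \to 1^- \) limit, since both \( {}_2F_1 \)'s above are \( 0 \)-balanced (their parameters satisfy \( c = a + b \)) and hence diverge logarithmically at \( z = 1 \). Here I would invoke Ramanujan's asymptotic for \( 0 \)-balanced hypergeometric series,
\[
\frac{\Gamma(a)\Gamma(b)}{\Gamma(a+b)}\, \pFq{2}{1}{a,b}{a+b}{z} = -\log(1-z) - 2\gamma - \psi(a) - \psi(b) + O\big( (1-z)\log(1-z) \big) \,,
\]
applied to each hypergeometric piece. The resulting \( -\log(1-z) \) terms combine with the correction term to cancel the divergence exactly, and the surviving digamma contributions reassemble --- after using the reflection and duplication identities for \( \psi \) and \( \Gamma \), which produce the \( \cot \) terms and the factor \( A(x) = \psi(1) - \tfrac{1}{2}(\psi(1+x) + \psi(1-x)) \) --- into the claimed closed form. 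This final bookkeeping of gamma and digamma identities is the only delicate step; everything else is routine manipulation.
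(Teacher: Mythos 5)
Your proposal is correct and follows essentially the same route as the paper's own proof: the auxiliary series \( K(x;z) \), its parity-split evaluation via Pochhammer symbols into two \( 0 \)-balanced \( {}_2F_1 \)'s, the stuffle-product identification of the \( \log(1-z) \) correction term with the generating series \( \Gamma(\tfrac12)/\Gamma(1-\tfrac{x}{2})\Gamma(\tfrac{1+x}{2}) \), and the Ramanujan asymptotic to evaluate the limit. No substantive differences to report.
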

	
	Along with \autoref{eqn:t1mm1}, this Proposition establishes that \( \reg_{T=\log2} t(1,\{\overline{1}\}^n,1) \) is a polynomial in Riemann zeta values, Dirichlet beta values and \( \log2 \).  More precisely, from \autoref{eqn:t1mm1} we have the following generating series identity
	\begin{align*}
	& G^{t}_{1\{\overline{1}\}1}(u) =   -\frac{1}{2} t(2) u^2 + \frac{1}{2} G^t_{\{\overline{1}\}1}(u)^2 + \frac{\pi u^2}{16} \Big( 2\Big( \! L\Big(\frac{u}{2}\Big) - L\Big({-}\frac{u}{2}\Big) \! \Big) + u L\Big(\frac{u}{2}\Big) L\Big({-}\frac{u}{2}\Big) \! \Big)
	\end{align*}
	The combination involving \( L \) simplifies in a significant way (to the first bracketed term below), so that no gamma functions survive.  In particular, we find
	\begin{align*}
	& G^{t}_{1\{\overline{1}\}1}(u) = \\
	& -\frac{u^2\pi^2}{16} + \Big\{ \frac{\pi u}{4} - \frac{\pi^2 u^2}{16} \cot\Big(\frac{u\pi}{4}\Big) + \frac{u^2}{8}\Big( 2 A\Big( \frac{u}{2} \Big) + \log2 \Big)^2 \sin\Big(\frac{u\pi}{2}\Big) - \frac{\pi^2 x^2}{16} \tan\Big(\frac{u\pi}{4}\Big) \Big\} \\
	& + \frac{u^2}{8} \Big\{ 
	\! \Big( \! \cos\Big(\frac{\pi u}{4}\Big) + \sin\Big(\frac{\pi u}{4}\Big) \! \Big)\!  \Big( 2 A\Big(\frac{u}{2}\Big) + \log2  \Big)
	{-} A\Big(\frac{u}{8}\Big) + A\Big(\frac{u}{4}\Big) - 2  C\Big(\frac{u}{2}\Big) + \log2 \! \Big\}^2 \,.	 
	\end{align*}
	An explicit formula for \( \reg_{T=\log2} t(1,\{\overline{1}\}^n,1) \) can then be extracted from this generating series.  As before, the general regularization \( \reg_{T} t(1, \{\overline{1}\}^n, 1) \) can be recovered using the stuffle product.

	\subsubsection{\texorpdfstring{Evaluation of \( t(1,\{\overline{1}\}^n,1) \)}
		{Evaluation of t\textasciicircum{}*(1,\{-1\}\textasciicircum{}n,1)}}\label{sec:ts1mm1}
	
	Likewise, a corresponding identity for \( \reg_{T} t^\star(1, \{\overline{1}\}^m, 1) \) can be derived using the stuffle antipode as in \autoref{sec:tstar3223}.  Specifically
	\[
	G^{t,\star}_{1\{\overline{1}\}1}(u) =  - G^t_{1\{\overline{1}\}1}(-u) - G^t_{\{\overline{1}\}1}(-u) G^{t,\star}_{\{\overline{1}\}1}(u) \,,
	\]
	where \(  G^{t,\star}_{\{\overline{1}\}1}(u) = - G^t_{1\{\overline{1}\}}(-u) G^{t,\star}_{\{\overline{1}\}}(u) \), and \( G^{t,\star}_{\{\overline{1}\}}(u) G^t_{\{\overline{1}\}}(-u) = 1 \) using similar considerations.  Since 
	\[
	G^t_{\{\overline{1}\}}(u) = \cos\Big(\frac{u\pi}{4}\Big) - \sin\Big( \frac{u\pi}{4}\Big) \,
	\]
	from \cite[Corollary 6.1, proof]{hoffman19}, we can unwind these generating series relations to obtain explicitly
	\begin{align*}
	&  G^{t,\star}_{1\{\overline{1}\}1}(u) = \\
	& \frac{u\pi}{4} + \frac{u^2\pi^2}{16} \Big( \! 1 - 2 \csc\!\Big( \frac{u\pi}{2} \Big) \! \Big) + \frac{u^2}{8} \Big(2 A\Big(\frac{u}{2}\Big) + \log2 \Big)^2 + \frac{u^2}{8} \Big( A\Big(\frac{u}{8}\Big)  - A\Big(\frac{u}{4}\Big)  + 2 C\Big(\frac{u}{2}\Big)  - \log2  \Big)^2 \\
	& - \frac{u^2}{4} \Big(2 A\Big(\frac{u}{2}\Big) + \log2 \Big) \Big( A\Big(\frac{u}{8}\Big)  - A\Big(\frac{u}{4}\Big)  + 2 C\Big(\frac{u}{2}\Big)  - \log2 \Big) \Big(\cos\!\Big(\frac{u\pi}{4}\Big) - \sin\!\Big(\frac{u\pi}{4}\Big) \Big) \sec\!\Big(\frac{u\pi}{2}\Big).
	\end{align*}

	\subsection{\texorpdfstring{Evaluations of \( t^\half(\{1\}^n, 2\ell+2) \) and \( t^\half(2\ell+2, \{1\}^{2n}, 2\ell+2) \)}
		{Evaluations of t\textasciicircum{}½(\{1\}\textasciicircum{}n, 2l+2) and t\textasciicircum{}½(2l+2, \{1\}\textasciicircum{}2n, 2l+2)}}
	\label{sec:th111ev}
	
	From \autoref{thm:dep}, in the case \( r = \frac{1}{2} \) we observe the following:
	\begin{align*}
	\reg_{T=\log2} t^\half(\{1\}^n, 2\ell+2) & \overset{\mathrm{stuffle}}{=} \reg_{T=\log2} (-1)^n \, t^\half(2\ell+2,\{1\}^n)  \pmod{\mathrm{products}} \,.
	\end{align*}
	On the other hand, from \autoref{cor:symtr}, we have
	\begin{align*}
	\reg_{T=\log2} t^\half(\{1\}^n, 2\ell+2) & \overset{\mathrm{sym}}{=} \reg_{T=\log2} (-1)^{n+1} t^\half(2\ell+2,\{1\}^n) \pmod{\mathrm{products}} \,.
	\end{align*}
	Because of the opposite signs in each case, both of these \( t^\half \)-values must be reducible individually.  In fact, an argument based on the hypothetical `derivation with respect to \( \log2 \)' \cite[Conjecture 2.1 and thereafter]{hoffman19} (formalized somewhat in \cite[Remark 5.9]{charltont2212}) suggests that \( t^\half(\{1\}^n, 2) \) should not contain any terms of the form \( \log^k 2 \cdot u \), \( k < n \), with \( u \) indecomposable.  This does not preclude terms of the form \( \log^k 2 \cdot u \cdot v \), but does force \( t^\half(\{1\}^n, 2\ell+2) \) to be especially simple. \medskip
	
	In fact, from \autoref{thm:symgsfull} and \autoref{thm:dep} we can extract suitable generating series identities which give the following evaluation of \( t^\half(\{1\}^n, 2\ell+2) \) as a polynomial in \( \log2 \) and Riemann zeta values.
	\begin{Thm}\label{thm:thalf111ev}
		The following generating series identity holds:
		\begin{equation}\label{eqn:thalf111ev}
		R(u, \lambda) \coloneqq \sum_{n,\ell\geq0} t^\half(\{1\}^n, 2\ell+2) u^n \lambda^{2\ell+2} = \frac{\lambda^2 \pi^2 \sec\big(\frac{\lambda \pi}{2} \big) \sec\big(\frac{\pi u}{4}\big)}{8 \Gamma\big(1 - \frac{\lambda}{2} - \frac{u}{4}\big)\Gamma\big(1 + \frac{\lambda}{2} - \frac{u}{4}\big)}\frac{\Gamma(\frac{1}{2} - \frac{u}{4}\big)}{\Gamma(\frac{1}{2} + \frac{u}{4}\big)} \,.
		\end{equation}
		In particular, \( t^\half(\{1\}^n, 2\ell+2) \) is always a polynomial in \( \log{2} \), and Riemann zeta values.
	\end{Thm}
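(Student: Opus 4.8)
The plan is to introduce, alongside $R(u,\lambda)$, the reversed generating series
\[
S(u,\lambda) \coloneqq \sum_{n,\ell\geq0} \reg_{T=\log2} t^\half(2\ell+2, \{1\}^n)\, u^n \lambda^{2\ell+2} \,,
\]
and to derive two independent functional relations between $R$ and $S$, one from the stuffle antipode (\autoref{thm:dep}) and one from the Symmetry Theorem (\autoref{thm:symgsfull}), as foreshadowed. Since $1-\half=\half$, \autoref{thm:dep} at $r=\half$ gives $\reg_{T=\log2}t^\half(\{1\}^n,2\ell+2)=(-1)^n\reg_{T=\log2}t^\half(2\ell+2,\{1\}^n)$ modulo products, whereas \autoref{cor:symtr} gives the same identity with the opposite overall sign $(-1)^{n+1}$. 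The sign mismatch already forces both families to be reducible, but to pin down the actual polynomial I need the explicit product terms, which is precisely what the full generating-series forms supply.

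First I would write out the full stuffle-antipode relation by applying $\reg_{T=\log2}t^\half=\reg_{T=\log2}t\circ\Sigma^\half$ to \autoref{eqn:stuffleantipode}, then specialise the reversal to the shape $(\{1\}^n,2\ell+2)\mapsto(2\ell+2,\{1\}^n)$. Collecting the resulting convolution of products yields \emph{Relation A}, expressing $R$ in terms of $S$ (with the sign $(-1)^n$ realised as a substitution $u\mapsto-u$, exactly as in the $G^{t,\star}_{3\{2\}3}$ computation of \autoref{sec:tstar3223}) together with the depth-one auxiliary series $\sum_n\reg_{T=\log2}t^\half(\{1\}^n)u^n$ and $\sum_\ell t^\half(2\ell+2)\lambda^{2\ell+2}$. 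These depth-one series have closed forms in terms of $\cos$, $\sec$ and the $A$-series, and can be substituted explicitly.

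The second relation is the delicate one. To feed the variable-depth family $(\{1\}^n,2\ell+2)$ into the \emph{fixed}-depth identity of \autoref{thm:symgsfull} at $\vec\phi=\vec0$, the leading block $\{1\}^n$ must be produced across all depths simultaneously. I would extract the constant term in each $y$-variable attached to a part equal to $1$, and the coefficient of $y^{2\ell+1}$ for the trailing $2\ell+2$, and then sum the depth-$m$ identities over $m$ with weight $u^n\lambda^{2\ell+2}$. This summation is the promised ``infinite series of differentials'': prepending a $1$ acts as an insertion operator $\mathcal D$ on the generating series, and assembling $\sum_{n\ge0}u^n\mathcal D^n$ into a single closed operator converts the family of fixed-depth relations into one functional equation. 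The output, \emph{Relation B}, expresses a combination of $R$ and $S$ through convolutions of the $\Lic_{T=0}$ and $\Tic_{T=\log2}$ series and the $t$-Bernoulli series $B^t_{T=\log2}(0\mid y)$, all of which carry the trigonometric/Gamma-function evaluations established earlier and via the reflection-formula identity used in \autoref{prop:zetamm1}.

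With Relations A and B in hand, the two equations form a linear system in $R$ and $S$ whose remaining ingredients are already-evaluated functions; eliminating $S$ isolates $R(u,\lambda)$. The final step is to simplify the resulting expression into the claimed quotient of Gamma functions times $\sec(\tfrac{\lambda\pi}{2})\sec(\tfrac{\pi u}{4})$, using the reflection formula $\Gamma(1-z)\Gamma(1+z)=\pi z\csc(\pi z)$ and the ratio $\Gamma(\tfrac12-\tfrac u4)/\Gamma(\tfrac12+\tfrac u4)$ already exploited in \autoref{prop:zetamm1}. I expect the main obstacle to be exactly this second relation: correctly formalising the infinite insertion operator for the leading $1$'s while tracking the $\reg_{T=\log2}$ regularization (the $\log2$'s contributed by each trailing $t(1)$), and then recognising that the mass of trigonometric product terms collapses into the single Gamma-quotient. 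Matching the $\sec$ and $\csc$ factors and confirming that no indecomposables survive is where the bookkeeping will be heaviest.
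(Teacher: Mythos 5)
Your overall strategy is exactly the paper's: introduce the reversed series $S(u,\lambda)$, obtain one relation from the stuffle antipode (in the paper this is $R(u,\lambda)\,G^{t,\half}_{\{1\}}(-u)-S(-u,\lambda)=0$) and a second from \autoref{thm:symgsfull} via a formal infinite-order differential operator, then solve the $2\times2$ linear system and simplify with Gamma-function identities. The one step that would fail as you describe it is the extraction of the family $t^{\half}(\{1\}^n,2\ell+2)$ from the fixed-depth Symmetry Theorem. Taking ``the constant term in each $y$-variable attached to a part equal to $1$'' produces the plain value $t(\{1\}^n,2\ell+2)$, not the interpolated $t^{\half}$, which by definition $t^{\half}=t\circ\Sigma^{\half}$ is a weighted sum over all coarsenings of the composition; and an insertion operator that prepends leading $1$'s one at a time does not obviously generate those coarsenings either. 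In the paper the interpolation is realized instead by substituting $y_i=\tfrac{u}{2}$ into the depth-$d$ identity and summing with weight $u^d$ over $d$ (so that the coefficient of $u^n$ collects all coarsenings with the correct powers of $\tfrac12$), while the differential operator $\mathcal D=\cosh\big(\lambda\frac{\partial}{\partial W}\big|_{W=0}\big)+\tfrac{2\lambda}{u}\sinh\big(\frac{\partial}{\partial W}\big|_{W=0}\big)$ acts on a single \emph{trailing} variable $W$, to enforce that the last entry is $\geq k$ with $k$ even and to sum over $\ell$. So the ``infinite series of differentials'' lives at the tail of the composition, not at its head; with that correction (and the auxiliary evaluations of $\Tic_{T=\log2}(y)-\Tic_{T=\log2}(-y)$, $\sum_i\Lic_{T=0}(\{0\}^i)y^i$, $\sum_i\Lic_{T=0}(\{0\}^{i-1},x)y^i$ and $G^{t,\half}_{\{1\}}(-u)$ in terms of Gamma functions) your plan goes through as in the paper.
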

	
	We postpone the proof until the end of the section; we immediately have the following corollaries.
	
	\begin{Cor}
		The following M$t$V 
		\[
		t^\half(2\ell+2, \{1\}^{2n}, 2\ell+2)
		\]
		is always a polynomial in \( \log{2} \) and Riemann zeta values.
	\end{Cor}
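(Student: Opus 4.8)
The plan is to read the evaluation straight off the stuffle antipode, exploiting two features of $I \coloneqq (2\ell+2, \{1\}^{2n}, 2\ell+2)$: it is a palindrome, and it has \emph{even} length $\ell(I) = 2n+2$; moreover $r = \half$ is the self-dual parameter, since $1 - r = r$.

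First I specialise to $r = \half$ in the product form of the antipode obtained inside the proof of \autoref{thm:dep} (that is, the identity gotten by applying $\reg_{T=\log2} t^r = \reg_{T=\log2} t \circ \Sigma^r$ to \autoref{eqn:stuffleantipode}), namely
\begin{equation*}
\reg_{T=\log2} t^\half(I) = \sum_{I_1 \sqcup \cdots \sqcup I_k = I} (-1)^{\ell(I)-k} \prod_{i=1}^{k} \reg_{T=\log2} t^\half(\bar{I_i}) \,.
\end{equation*}
The $k = 1$ term equals $(-1)^{\ell(I)-1} \reg_{T=\log2} t^\half(\bar I)$, which is $-\reg_{T=\log2} t^\half(I)$ because $\bar I = I$ and $\ell(I)$ is even. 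Moving it across gives
\begin{equation*}
2 \reg_{T=\log2} t^\half(I) = \sum_{\substack{I_1 \sqcup \cdots \sqcup I_k = I \\ k \ge 2}} (-1)^{\ell(I)-k} \prod_{i=1}^{k} \reg_{T=\log2} t^\half(\bar{I_i}) \,,
\end{equation*}
so that $t^\half(I)$ is exhibited as a $\Q$-linear combination of \emph{products} of strictly shorter interpolated $t$-values.

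The decisive point is combinatorial: the two entries equal to $2\ell+2$ occupy the two extreme ends of $I$, so in any decomposition with $k \ge 2$ no block contains both of them, and a block that contains one contains it at the matching end of the block. Hence every $\bar{I_i}$ has one of the four shapes $(\{1\}^a, 2\ell+2)$, $(2\ell+2, \{1\}^a)$, $(\{1\}^b)$ or $(2\ell+2)$. By \autoref{thm:thalf111ev} the values $\reg_{T=\log2} t^\half(\{1\}^a, 2\ell+2)$ are polynomials in $\log 2$ and Riemann zeta values; the $\reg_{T=\log2} t^\half(\{1\}^b)$ are regularised powers of $\log 2$; and $t^\half(2\ell+2) = t(2\ell+2)$ is a rational multiple of $\pi^{2\ell+2}$, hence a polynomial in zeta values. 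For the one remaining shape $(2\ell+2, \{1\}^a)$ I apply the same antipode formula once more: its blocks, after reversal, are again of the first, third and fourth shapes, so it too reduces to a polynomial in $\log 2$ and Riemann zeta values.

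Inserting these evaluations into the displayed identity writes $2 \reg_{T=\log2} t^\half(I)$, and hence $t^\half(I)$ itself (which is convergent, as $I$ begins and ends in $2\ell+2 > 1$), as a polynomial in $\log 2$ and Riemann zeta values, as claimed. The only real care is the bookkeeping of the stuffle regularisation for the divergent blocks, those ending in $1$ such as $(2\ell+2, \{1\}^a)$; and I should record that the evenness of the number of interior $1$'s is exactly what forces the $k=1$ term to reinforce rather than cancel, producing the nontrivial left-hand factor $2$. With an odd number of interior $1$'s the same manipulation only yields a relation among products and gives no evaluation.
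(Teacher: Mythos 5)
Your argument is correct and is essentially the paper's: both rest on the stuffle antipode (\autoref{thm:dep}) at the self-dual parameter \( r=\half \), on the palindromic, even-length shape of \( (2\ell+2,\{1\}^{2n},2\ell+2) \) forcing the \( k=1 \) term to reinforce rather than cancel and so exhibit \( 2\reg_{T=\log2}t^\half(I) \) as a sum of products, and on \autoref{thm:thalf111ev} to evaluate the factors. The paper merely packages the antipode more compactly as the generating-series identity \( G^{t,\half}_{2\ell+2,\{1\},2\ell+2}(u)+G^{t,\half}_{2\ell+2,\{1\},2\ell+2}(-u)=G^{t,\half}_{\{1\},2\ell+2}(u)\,G^{t,\half}_{\{1\},2\ell+2}(-u) \), so that only two-factor products of the convergent values \( t^\half(\{1\}^a,2\ell+2) \) appear and the extra block shapes you must track are avoided --- including the divergent all-ones blocks, where your description of \( \reg_{T=\log2}t^\half(\{1\}^b) \) as ``regularised powers of \( \log2 \)'' is slightly off (e.g.\ \( \reg_{T=\log2}t^\half(1,1,1)=\tfrac16\log^3 2+\tfrac1{12}t(3) \) also involves an odd zeta value), though this is harmless since zeta values are permitted in the conclusion anyway.
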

	
	{\noindent Again recall from \autoref{def:Gnotation} that \( G^{t,\bullet}_{\alpha\{\beta\}\gamma}(u) \) is the general notation for the generating function of \emph{stuffle}-regularized M$t$V's of the form \( \reg_{T=\log2}^\ast t^\bullet(\alpha, \{\beta\}^n, \gamma) u^{\abs{\alpha} + n\abs{\beta} + \abs{\gamma}} \).}
	
	\begin{proof}
		This follows immediately with the stuffle antipode \autoref{thm:dep}, since \( t^\half(\{1\}^i, 2\ell+2) \) is always a polynomial in \(\log{2} \) and Riemann zeta values.  In particular
		\[
		G^{t,\half}_{2\ell+2,\{1\},2\ell+2}(u) + 	G^{t,\half}_{2\ell+2,\{1\},2\ell+2}(-u) = 	G^{t,\half}_{\{1\},2\ell+2}(u) G^{t,\half}_{\{1\},2\ell+2}(-u) \,.  \qedhere
		\]
	\end{proof}
	
	A particularly interesting case occurs for \( \ell = 0 \), wherein \( t^\half(2,\{1\}^n,2) \) appears to be a rational multiple of \( t(4+n) \) in each weight.  From the previous Corollary we have the following special case.
	
	\begin{Cor}\label{cor:thalf2112}
		The following evaluation holds:
		\[
		t^\half(2,\{1\}^{2n},2) = \frac{3+2n}{2^{2+2n}} t(4+2n) \,.
		\]
		
		\begin{proof}
			We have (after some straightforward simplification) that
			\begin{equation}\label{eqn:gsthalf1112}
			\begin{aligned}
			G^{t,\half}_{\{1\}2}(u) & {} \coloneqq \sum_{i=0}^\infty t^{\half}(\{1\}^i, 2)u^{i+2} \\
			& = \frac{u^2}{2!} \frac{\partial^2}{\partial \lambda^2}\bigg\rvert_{\lambda=0} R(u,\lambda) = 
			e^{u \log2} \frac{\Gamma\big(1 - \frac{u}{2}\big)\Gamma\big(1+\frac{u}{4}\big)^2}{\Gamma\big(1 +\frac{u}{2}\big)\Gamma\big(1-\frac{u}{4}\big)^2} \cdot \frac{\pi u}{2}  \tan\Big(\frac{\pi u}{4}\Big) \,.
			\end{aligned}
			\end{equation}
			From the stuffle antipode in \autoref{thm:dep}, we have the generating series identity
			\[
			G^{t,\half}_{2\{1\}2}(u) + G^{t,\half}_{2\{1\}2}(-u) =  G^{t,\half}_{\{1\}2}(u) G^{t,\half}_{\{1\}2}(-u) \,.
			\]
			Using \autoref{eqn:gsthalf1112}, we find
			\[
			G^{t,\half}_{2\{1\}2}(u) + G^{t,\half}_{2\{1\}2}(-u) = \frac{\pi^2 y^2}{4} \tan^2\Big(\frac{\pi y}{4}\Big) \,
			\]
			which is equivalent to the claimed evaluation.
		\end{proof}
	\end{Cor}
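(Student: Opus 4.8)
The plan is to reduce the statement to two generating-series inputs: the diagonal series $G^{t,\half}_{\{1\}2}(u)$ extracted from \autoref{thm:thalf111ev}, and the symmetric-sum relation produced by the stuffle antipode (\autoref{thm:dep}) specialized to the self-dual parameter $r=\half$. First I would isolate the $\ell=0$ part of $R(u,\lambda)$. Since $R(u,\lambda)$ carries an explicit prefactor $\lambda^2$, the coefficient of $\lambda^2$ is obtained simply by setting $\lambda=0$ in the remaining factors, giving $G^{t,\half}_{\{1\}2}(u)=\frac{u^2}{2!}\frac{\partial^2}{\partial\lambda^2}\big\rvert_{\lambda=0}R(u,\lambda)$ as in \eqref{eqn:gsthalf1112}. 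The substantive work here is the gamma-function simplification, turning the quotient of $\Gamma(1\pm\tfrac u4)$, $\Gamma(\tfrac12\pm\tfrac u4)$ and $\sec(\tfrac{\pi u}{4})$ into the closed form $e^{u\log2}\frac{\Gamma(1-\frac u2)\Gamma(1+\frac u4)^2}{\Gamma(1+\frac u2)\Gamma(1-\frac u4)^2}\cdot\frac{\pi u}{2}\tan(\frac{\pi u}{4})$, which I would carry out using the reflection formula $\Gamma(z)\Gamma(1-z)=\pi/\sin(\pi z)$ and the Legendre duplication formula.

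Next I would apply the stuffle antipode \autoref{thm:dep} at $r=\half$ (where $1-r=r$, so reversal is self-dual up to sign) to the two-block family $t^\half(2,\{1\}^n,2)$. As in \autoref{sec:tstar3223}, recombining the resulting antipode sum produces the symmetric-sum generating-series identity $G^{t,\half}_{2\{1\}2}(u)+G^{t,\half}_{2\{1\}2}(-u)=G^{t,\half}_{\{1\}2}(u)\,G^{t,\half}_{\{1\}2}(-u)$. I would then observe that the product on the right telescopes: under $u\mapsto-u$ the exponential factor $e^{u\log2}$ and the entire gamma quotient invert, while $\frac{\pi u}{2}\tan(\frac{\pi u}{4})$ is even, so every transcendental prefactor cancels and the product collapses to $\frac{\pi^2 u^2}{4}\tan^2(\frac{\pi u}{4})$.

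Finally I would extract coefficients. Writing $G^{t,\half}_{2\{1\}2}(u)=\sum_{n\geq0}\reg_{T=\log2}t^\half(2,\{1\}^n,2)\,u^{n+4}$, only the even-$n$ terms survive the reflection sum, so the coefficient of $u^{2n+4}$ equals $2\,t^\half(2,\{1\}^{2n},2)$. To identify the right-hand side I would use the partial-fraction expansion $\sum_{m\geq1}t(2m)x^{2m}=\frac{\pi x}{4}\tan(\frac{\pi x}{2})$, together with $\tan^2(\tfrac{\pi x}{2})=\tfrac{2}{\pi}\tfrac{\dd}{\dd x}\tan(\tfrac{\pi x}{2})-1$, to rewrite $\frac{\pi^2 u^2}{4}\tan^2(\frac{\pi u}{4})$ (after the substitution $x=\tfrac u2$) as $\sum_{m\geq2}\frac{8(2m-1)}{2^{2m}}t(2m)\,u^{2m}$; the stray $\pi^2 x^2$ term cancels the $m=1$ contribution precisely because $t(2)=\frac{\pi^2}{8}$. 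Matching the coefficient of $u^{2n+4}$ (so $m=n+2$) then yields $t^\half(2,\{1\}^{2n},2)=\frac{3+2n}{2^{2+2n}}t(4+2n)$.

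I expect the gamma-function simplification in the first step to be the main obstacle: once the closed form \eqref{eqn:gsthalf1112} is in hand, the cancellation in the product and the final coefficient comparison are routine, and the clean rational factor $\frac{3+2n}{2^{2+2n}}$ emerges automatically from the $\tan^2$ expansion.
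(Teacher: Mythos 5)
Your proposal follows the paper's proof exactly: extract $G^{t,\half}_{\{1\}2}(u)$ from $R(u,\lambda)$ as $\frac{u^2}{2!}\partial_\lambda^2\big\rvert_{\lambda=0}R(u,\lambda)$, apply the stuffle antipode to get $G^{t,\half}_{2\{1\}2}(u)+G^{t,\half}_{2\{1\}2}(-u)=G^{t,\half}_{\{1\}2}(u)G^{t,\half}_{\{1\}2}(-u)$, and observe that the gamma and exponential prefactors cancel to leave $\frac{\pi^2u^2}{4}\tan^2(\frac{\pi u}{4})$. Your final coefficient extraction via $\sum_m t(2m)x^{2m}=\frac{\pi x}{4}\tan(\frac{\pi x}{2})$ and $\tan^2=\frac{2}{\pi}\frac{\dd}{\dd x}\tan-1$ correctly fills in the step the paper leaves as ``equivalent to the claimed evaluation.''
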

	
	This evaluation only holds for even weight; in odd weight \( t^\half(2,\{1\}^{2n+1},2) \) appears to evaluate in a similar straightforward way. Since \( t(2n+1) \) is (conjecturally) irreducible the Symmetry Theorem likely cannot be applied to establish the evaluation, and some new technique will be necessary.  We therefore leave the odd weight case as the following conjecture.
	
	\begin{Conj}\label{conj:thalf21od2}
		The following evaluation holds:
		\[
		\		t^\half(2,\{1\}^{2n+1},2) = \frac{4+2n}{2^{3+2n}} t(5+2n) \,.
		\]
	\end{Conj}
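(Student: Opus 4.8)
The plan is to determine the full generating series $G^{t,\half}_{2\{1\}2}(u) = \sum_{n\geq0}\reg_{T=\log2} t^\half(2,\{1\}^n,2)\,u^{n+4}$, whose \emph{symmetric} part is already fixed by the stuffle antipode. Indeed, the proof of \autoref{cor:thalf2112} yields $G^{t,\half}_{2\{1\}2}(u) + G^{t,\half}_{2\{1\}2}(-u) = \tfrac{\pi^2 u^2}{4}\tan^2\!\big(\tfrac{\pi u}{4}\big)$, which is exactly the generating series of the even-index values $t^\half(2,\{1\}^{2n},2)$. Consequently the conjecture is equivalent to identifying the \emph{antisymmetric} part, namely
\[
\tfrac12\big(G^{t,\half}_{2\{1\}2}(u) - G^{t,\half}_{2\{1\}2}(-u)\big) = \sum_{n\geq0}\frac{4+2n}{2^{3+2n}}\, t(5+2n)\, u^{5+2n},
\]
and the entire difficulty is to produce a \emph{second}, exact generating-series relation that pins this piece down, since the antipode alone controls only the symmetric part.

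First I would try to manufacture this second relation from the generating-series Symmetry Theorem (\autoref{thm:symgsfull}) at $\vec{\phi}=\vec{0}$. As that identity is stated for $t=t^0$, I would transport it to $t^\half$ through $\reg_{T=\log2} t^\half = \reg_{T=\log2} t\circ\Sigma^{\half}$, realizing the operator $\Sigma^{\half}$ from the proof of \autoref{thm:dep} as the same ``infinite series of differentials'' in the variables $y_1,\ldots,y_m$ that already proved \autoref{thm:thalf111ev}. Extracting the coefficient of $y_1 y_m$ (with all interior variables to the zeroth power, $m=2n+3$) from the resulting interpolated identity should, in principle, express the antisymmetric part exactly in terms of the known building blocks $R(u,\lambda)$, the depth-one $t$-Bernoulli series $B^t_{T=\log2}(0\mid y)$, and the $\zeta$-series $\Lic_{T=0}$; one then solves this together with the antipode relation and reads off the coefficient of $u^{5+2n}$.

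The main obstacle is precisely the degeneracy flagged after \autoref{cor:thalf2112}: the string $(2,\{1\}^{2n+1},2)$ is a palindrome of \emph{odd} length and \emph{odd} weight, so every instance of the antipode and of the Symmetry Theorem collapses to a tautology modulo products. Hence the target $\tfrac{4+2n}{2^{3+2n}}t(5+2n)$ is a genuine irreducible, and all of its content must be generated by the product and regularization terms of the differentiated identity. The crux is twofold: to show that these terms contribute \emph{no spurious irreducible} of weight $5+2n$ (which would spoil the clean single-$t$ evaluation), and to compute the exact rational coefficient with which $t(5+2n)$ survives. I expect this coefficient to lie genuinely beyond the reach of the ``mod products'' machinery, so I would instead control it either through the conjectural derivation-with-respect-to-$\log2$ structure alluded to in the text (cf. \cite[Remark 5.9]{charltont2212}), or by a direct motivic-coaction computation isolating the depth-graded, single-zeta component of $t^\half(2,\{1\}^{2n+1},2)$. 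Establishing that this depth-one piece equals $\tfrac{4+2n}{2^{3+2n}}t(5+2n)$ independently of the symmetry relations is the genuinely new ingredient the conjecture demands.
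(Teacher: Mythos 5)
Be aware that the statement you are trying to prove is left as an open conjecture in the paper itself: the authors explicitly state that, since \( t(2n+1) \) is (conjecturally) irreducible, the Symmetry Theorem likely cannot establish the odd-weight evaluation, and they verify \autoref{conj:thalf21od2} only via the Data Mine up to weight 11 and numerically to 1000 digits in weights 13 and 15. Your analysis of \emph{why} the machinery fails is accurate and matches the paper's own diagnosis: for the palindrome \( (2,\{1\}^{2n+1},2) \) of odd weight and odd length, both the stuffle antipode (\autoref{thm:dep}) and the Symmetry Theorem (\autoref{thm:symgsfull}) degenerate — the antipode controls only the symmetric part \( G^{t,\half}_{2\{1\}2}(u)+G^{t,\half}_{2\{1\}2}(-u) \), which carries no information about the odd-weight coefficients, and the symmetry relation reduces to a tautology modulo products while the target \( \tfrac{4+2n}{2^{3+2n}}t(5+2n) \) is a genuine irreducible that the product terms cannot produce.

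However, your proposal does not close this gap: the two escape routes you name are not carried out, and neither is currently capable of yielding a proof. The ``derivation with respect to \( \log 2 \)'' is itself only a conjectural structure (the paper cites it as \cite[Conjecture 2.1 and thereafter]{hoffman19}), so an argument resting on it would at best be conditional. A motivic-coaction computation could in principle isolate the depth-graded part of \( t^\half(2,\{1\}^{2n+1},2) \) and show that no \emph{other} irreducibles appear, but the coaction annihilates the primitive \( \zeta^\mot(2n+5) \) component, so it cannot determine the rational coefficient of \( t(5+2n) \) — which is precisely the content of the conjecture. In short, you have correctly reduced the problem to the hard part, but the hard part remains unproven; your proposal should be presented as a strategy discussion rather than a proof, exactly as the paper does.
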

	
	\begin{Rem}
		We have checked this conjecture using the Data Mine \cite{datamine} up to weight 11 (the relevant limit of the Data Mine for alternating MZV's).  Thereafter, we have also verified numerically to 1000 decimal places in weights 13 and 15.
		
		The reader using the tables in \cite[Appendix A]{hoffman19} to check this conjecture in weight 7 should be aware of a misprint.  The formula for \( t(2,1,2,2) \) there in \cite{hoffman19} (corresponding to \( t(2,2,1,2) \) in the convention of the present paper) should read
		\begin{align*}
		t(2,1,2,2) =  -\frac{15}{32} t(7) - \frac{1}{14} t(3)t(4)+ \frac{111}{248} t(2)t(5) \,;
		\end{align*}
		the formula as printed has an incorrect coefficient for \( t(2)t(5) \).
	\end{Rem}
	
	\begin{Rem}
		A two-variable generating series expression for \( t^\half(2\ell+2, \{1\}^{2n}, 2\ell+2) \) is not as straightforward to find.  It requires taking the Hadamard (coefficient-wise) product of \( R(u, \lambda) \) with \( R(-u, \lambda) \), viewed as power series in \( \lambda \).  (This Hadamard product can be given implicitly through an integral representation.)  However, by repeated differentiation, one can extract any particular \( G^{t,\half}_{\{1\},2\ell+2}(u) \) and the corresponding series for \( t^\half(2\ell+2,\{1\}^{2n},2\ell+2) \).
		
		For example,
		\begin{align*}
		G^{t,\half}_{\{1\}4}(u) & {} \coloneqq \sum_{i=0}^\infty t^{\half}(\{1\}^i, 4)u^{i+4} \\
		& = \frac{u^4}{4!} \frac{\partial^4}{\partial \lambda^4}\bigg\rvert_{\lambda=0} R(u,\lambda) = \frac{\pi ^2 u^4 \sec \big(\frac{\pi  u}{4}\big)}{64 \Gamma \big(1-\frac{u}{4}\big)^2 } \frac{\Gamma \big(\frac{1}{2}-\frac{u}{4}\big)}{\Gamma \big(\frac{1}{2}+\frac{u}{4}\big)} \Big(\pi ^2-2 \psi ^{(1)}\Big(1-\frac{u}{4}\Big)\!\Big) \,,
		\end{align*}
		where \( \psi^{(n)}(z) = \frac{\dd^{n+1}}{\dd z^{n+1}} \log\Gamma(z) \) is the order \( n \) polygamma function.  Therefore
		\begin{align*}
		\sum_{i=0}^\infty t^\half(4, \{1\}^{2i}, 4) u^{8+2i} &= \frac{1}{2} G^{t,\half}_{\{1\}4}(u)G^{t,\half}_{\{1\}4}(-u) \\
		&=  \frac{\pi ^2 u^6 }{512} \tan ^2\Big(\frac{\pi  u}{4}\Big) \Big(\pi ^2-2 \psi ^{(1)}\Big(1-\frac{u}{4}\Big)\!\Big) \Big(\pi ^2-2 \psi
		^{(1)}\Big(1+\frac{u}{4}\Big)\!\Big) \,.
		\end{align*}
	\end{Rem}
	
	\begin{Rem}
		In contrast to the case \( t^\half(2, \{1\}^n, 2) \) in \autoref{conj:thalf21od2}, where a similarly simple evaluation seems to hold in odd weight, no such simple evaluation holds seems to hold for the higher \( t^\half(2\ell+2, \{1\}^\text{odd}, 2\ell+2) \) analogues, viz:
		\[
		t^\half(4, 1, 1, 1, 4)  	
		\]
		involves \( \zeta(1, 1, \overline{9}) \) and 4 other irreducible alternating MZV's of weight 11.
		
		Similarly, \( t^\half(\{1\}^i, 2\ell+1) \), with an odd final argument \( >1 \), does not appear to evaluate nicely, as already
		\[
		t^\half(1,3) = -\frac{1}{2} \zeta(1, \overline{3})
		\]
		involves an irreducible weight 4 alternating MZV.  Likewise in weight 5 with \( t^\half(1,1,3) \) we already involve \( \zeta(1, 1, \overline{3}) \).
	\end{Rem}
	
	\begin{proof}[Proof of \autoref{thm:thalf111ev}]
		For simplicity, let us write \( \Tic_{T=\log2}(\vec{0} \mid y_1,\ldots,y_m) \eqqcolon \Tic_{T=\log2}(y_1,\ldots,y_m) \), and likewise for \( \Lic_{T=0}(y_1,\ldots,y_m) \), as the first tuple will here always be the zero tuple.  We note then that
		\begin{align*}
		&\Tic_{T=\log2}(\{ur\}^d) - \sum_{n=0}^{k-2} \frac{(ru)^n}{n!} \frac{\partial^n}{\partial W^n} \bigg\rvert_{W=0} \Tic_{T=\log2}(\{ur\}^{d-1}, W) \\
		& = \sum_{n=d}^\infty \sum_{\substack{I= (i_1,\ldots,i_d) \\ \abs{I} = n \\ i_d \geq k}} t(i_1,\ldots,i_{d-1},\underbrace{i_d}_{\geq k}) (ru)^{n - d} \,.
		\end{align*}
		The sum of derivatives step-by-step eliminates those indices \( (i_1,\ldots,i_{d-1},i_{d}) \) which end with \( i_d = 1, 2, \ldots, k-1 \).  (Since \( k \geq 2 \), we do not need to explicitly regularize above.)
		
		We therefore want to evaluate the following (for \( k \) even, and already specializing to \( r = 1/2 \)):
		\begin{equation}
		G^{t,\half}_{\{1\}k}(u) = \sum_{d=1}^\infty \frac{u^d}{2^{1-k}} \Big( \Tic_{T=\log2}(\{\tfrac{u}{2}\}^{d}) - \sum_{n=0}^{k-2} \frac{u^n}{2^n \cdot n!}  \frac{\partial^n}{\partial W^n} \bigg\rvert_{W=0} \Tic_{T=\log2}(\{\tfrac{u}{2}\}^{d-1}, W) \Big).
		\end{equation}
		Take the following generating series with respect to \( k \), and we find after switching the order of summation and (formally) summing the infinite series of differential operators, that
		\begin{equation}
		\label{eqn:gthalf_11k}
		\begin{aligned}[c]
		R(u, \lambda) & = \sum_{\substack{k = 2 \\ \text{\( k \) even}}} G^{t,\half}_{\{1\}k}(u) \Big(\frac{\lambda}{u} \Big)^k \\
		& =  \sum_{d=1}^\infty \frac{2 \lambda^2 u^d}{4 \lambda^2 - u^2} \Big( {-}\Tic_{T=\log2}(\{\tfrac{u}{2}\}^d) + \mathcal{D} \Tic_{T=\log2}(\{\tfrac{u}{2}\}^{d-1}, W) \Big) \,,
		\end{aligned}
		\end{equation}
		where \( \mathcal{D} \) denotes the (formal) differential operator \[
		\mathcal{D} \coloneqq \cosh\big(\lambda \tfrac{\partial}{\partial W}\big\rvert_{W=0}\big) + \tfrac{2\lambda}{u} \sinh\big(\tfrac{\partial}{\partial W}\big\rvert_{W=0}\big) \,.
		\]
		We interpret and understand the application of an infinite series of differentials \( \tfrac{\partial}{\partial W} \) to a function \( f \) via the Taylor series of \( f \).  Specifically \( \exp(\lambda \tfrac{\partial}{\partial W}\big\rvert_{W=0}) f(W) = f(\lambda) \), via its Taylor series (assuming convergence at the relevant points), which we then then extend (formally) to the hyperbolic trigonometric combination above.
		
		Likewise,
		\begin{equation}
		\label{eqn:gthalf_k11}
		\begin{aligned}[c]
		S(u,\lambda) & {} \coloneqq \sum_{\substack{k = 2 \\ \text{\( k \) even}}} G^{t,\half}_{k\{1\}}(u) \Big(\frac{\lambda}{u} \Big)^k \\
		& {} =  \sum_{d=1}^\infty \frac{2 \lambda^2 u^d}{4 \lambda^2 - u^2} \Big( {-}\Tic_{T=\log2}(\{\tfrac{u}{2}\}^d) + \mathcal{D} \Tic_{T=\log2}(W,\{\tfrac{u}{2}\}^{d-1}) \Big) \,.
		\end{aligned}
		\end{equation}
		
		From the stuffle antipode \autoref{thm:dep}, we find
		\begin{align}\label{eqn:r111minuss}
		R(u,\lambda) \cdot G^{t,\half}_{\{1\}}(-u) - S(-u, \lambda) = 0 \,.
		\end{align}
		Now write down the case \( \vec{y} = (\{\tfrac{u}{2}\}^{d-1}, W) \) of the Symmetry Theorem \autoref{thm:symgsfull} to obtain
		\begin{align*}
		& \sum_{i=0}^{d-1} (-1)^{d-1-i} \Tic_{T=\log2}(\{\tfrac{u}{2}\}^i, W) \Tic_{T=\log2}(\{-\tfrac{u}{2}\}^{d-1-i}) - (-1)^{d-1} \Tic_{T=\log2}(-W, \{\tfrac{u}{2}\}^{d-1}) \\
		& -\frac{1}{2^{d-1}} \sum_{i=0}^{d-2} (-1)^i \big( \Tic_{T=\log2}(\tfrac{u}{2}) - \Tic_{T=\log2}(-\tfrac{u}{2}) \big) \Lic_{T=0}(\{0\}^i) \Li_{T=0}(\{0\}^{d-2-i}, \tfrac{W-u/2}{2}) \\
		& - \big({-}\tfrac{1}{2} \big)^{d-1} \big(\Tic_{T=\log2}(W) - \Tic_{T=\log2}(-W) \big) \Lic_{T=0}\big( \{\tfrac{W-u/2}{2}\}^{d-1}\big) 
		= \delta_{\text{\( d \) even}} \frac{1}{d!} \Big( \frac{\ii\pi}{2}  \Big)^d \,.
		\end{align*}
		After taking \( \sum_{d=1}^\infty \bullet  u^d \), using the following results, we find that
		\begin{equation}\label{eqn:symgs:111ev}
		\begin{aligned}
		& \sum_{i=0}^\infty \Tic_{T=\log2}(\{\tfrac{u}{2}\}, W) u^{i+1} \cdot  e^{\gamma u / 2} \frac{\Gamma\big( \frac{1}{2} + \frac{u}{4} \big)}{\Gamma\big( \frac{1}{2} + \frac{u}{4} \big)} + \sum_{i=0}^\infty (-1)^i \Tic_{T=\log2}(-W, \{-\tfrac{u}{2}\}) u^{i+1} \\
		& = \frac{\pi u}{2} e^{\gamma u/2} \frac{\Gamma\big( 1 + \frac{u}{4} - \frac{W}{2} \big)}{\Gamma(1 - \frac{u}{4} - \frac{W}{2} \big)}  \bigg( \! \tan\!\bigg(\frac{\pi u}{4} \bigg) - \tan\!\Big( \frac{\pi W}{2}\bigg) \! \bigg) \,.
		\end{aligned}
		\end{equation} 
		This requires the following results obtainable via standard arguments and evaluations, as we indicate below.
		\begin{align}
		& \label{eqn:Tdepth1} \Tic_{T=\log2}(y) - \Tic_{T=\log2}(-y) = \frac{\pi}{2} \tan\Big(\frac{\pi y}{2}\Big) \,, \\
		& \label{eqn:Tuuu} \sum_{i=0}^\infty \Tic_{T=\log2}(\{\tfrac{u}{2}\}^i) (-u)^{i} = G^{t,\half}_{\{1\}}(-u)  = e^{\gamma u / 2} \frac{\Gamma\big( \frac{1}{2} + \frac{u}{4} \big)}{\Gamma\big( \frac{1}{2} - \frac{u}{4} \big)} \,, \\
		& \label{eqn:T000} \sum_{i=0}^\infty \Lic_{T=0}(\{0\}^i) y^i = \frac{e^{-\gamma y}}{\Gamma(1+y)} \,, \\
		& \label{eqn:T00X} \sum_{i=1}^\infty \Lic_{T=0}(\{0\}^{i-1}, x) y^i = \frac{e^{-\gamma y}}{\Gamma(1+y)} - \frac{\Gamma(1-x)\Gamma(1-y)}{\Gamma(1-x-y)}  \,.
		\end{align}
		Firstly, \autoref{eqn:Tdepth1} comes from just explicitly evaluating the generating series, using known formulas for \( t(2n) \) in terms of \( \zeta(2n) \).  Secondly, \autoref{eqn:Tuuu} requires recognizing the sum as an expression for \( G^{t,\half}_{\{1\}}(-u) \), which is then evaluated using the results of Section 6.2 \cite{hoffman-ihara}, and a variant of Equation (40) therein.  Then \autoref{eqn:T000} involves the generating series for \( \zeta(\{n\}^k) \) \cite[Equation 11]{kfold}, or rather the extension to the Hopf algebra \cite[Equation 32]{hoffman-ihara} in the case \( n = 1 \), and well-known Taylor series \( \log\Gamma(1+z) \) already mentioned (a variant given in \autoref{eqn:taylorLogGamma}).  Finally \autoref{eqn:T00X} involves the generating series for \( \zeta(\{1\}^j, d+1) \), \( d \geq 1 \), given in \cite[Equation 10]{kfold}, and the result in \autoref{eqn:T000}.
		
		Now write
		\(
		E(t) \coloneqq \sum_{j=0}^\infty \reg_{T=0} \zeta(\{1\}^j) t^j = \frac{e^{-\gamma t}}{\Gamma(1 + t)} 
		\).
		Using \cite[Lemma 1]{hoffman17} (after applying the homomorphism taking elementary symmetric functions to MZV's), we find
		\begin{align*}
		\sum_{i=0}^\infty \Lic_{T=0}(\{x\}^i) y^i &= 1 + \sum_{n \geq k \geq1} \sum_{\substack{I = (i_1, \ldots, i_k) \\ \abs{I} = n}} \reg_{T=0} \zeta(i_1,\ldots,i_n) x^{n-k} y^k \\
		&= E\Big(\!\Big(\frac{y}{x} - 1\Big) x\Big) E(-x)^{-1} = \frac{e^{\gamma y}\Gamma(1 -x )}{\Gamma(1 - x + y)} \,.
		\end{align*}
		These results suffice to obtain \autoref{eqn:symgs:111ev}. \medskip
		
		Now apply \( \mathcal{D} \) to both sides of \autoref{eqn:symgs:111ev}.  The left-hand side can be rewritten using \autoref{eqn:gthalf_11k} and \autoref{eqn:gthalf_k11}; the right-hand side is evaluated via the formal interpretation of \( \exp(\lambda \tfrac{\partial}{\partial W}\big\rvert_{W=0}) f(W) = f(\lambda) \) as discussed above.  We obtain after some simplification
		\begin{align}\label{eqn:r111pluss}
		R(u,\lambda) \cdot G^{t,\half}_{\{1\}}(-u) + S(-u, \lambda) = \frac{4 \lambda^2 \pi^2}{u^2 -4 \lambda^2} e^{\gamma u  /2} \frac{\sec\big(\frac{\lambda \pi}{2}\big) \sec\big(\frac{\pi u}{4}\big)}{\Gamma\big(-\frac{\lambda}{2} - \frac{u}{4}\big)\Gamma\big(\frac{\lambda}{2} - \frac{u}{4}\big)} \,.
		\end{align}
		Solving \autoref{eqn:r111minuss} and \autoref{eqn:r111pluss} simultaneously leads to the claimed result for \( R(u,\lambda) \) and, as a  by-product, a formula for \( S(u,\lambda) \), namely
		\begin{align*}
		R(u, \lambda) \coloneqq \sum_{n,\ell\geq0} t^\half(\{1\}^n, 2\ell+2) u^n \lambda^{2\ell+2} & = \frac{\lambda^2 \pi^2 \sec\big(\frac{\lambda \pi}{2} \big) \sec\big(\frac{\pi u}{4}\big)}{8 \Gamma\big(1 - \frac{\lambda}{2} - \frac{u}{4}\big)\Gamma\big(1 + \frac{\lambda}{2} - \frac{u}{4}\big)}\frac{\Gamma(\frac{1}{2} - \frac{u}{4}\big)}{\Gamma(\frac{1}{2} + \frac{u}{4}\big)} \,,	\\
		S(u, \lambda) = \sum_{n,\ell\geq0} \reg_{T=\log2} t^\half(2\ell+2,\{1\}^n) u^n\lambda^{2\ell+2} & = e^{-\gamma u/2} \frac{\lambda^2 \pi^2 \sec\big(\frac{\lambda \pi}{2} \big) \sec\big(\frac{\pi u}{4}\big)}{8 \Gamma\big(1 - \frac{\lambda}{2} + \frac{u}{4}\big)\Gamma\big(1 + \frac{\lambda}{2} + \frac{u}{4}\big)} \,.
		\end{align*}
		As before, \( S(u,\lambda) \) for the general regularization parameter \( T \) can also be recovered, via the stuffle antipode \autoref{thm:dep}, as \( e^{(T-\log{2})u} S(u,\lambda) \).  This completes the proof.
	\end{proof}

	\bibliographystyle{habbrv}
	\bibliography{bib}
	
	\appendix
	\section{Tails of multiple zeta values, and an analytic result}
	\label{sec:appendix}
	
	In this Appendix, we gather some results on the growth and convergence rate of the truncated MZV's, and a useful analytic result  \autoref{prop:specialsumconvergence} which we need when taking the limit as \( M \to \infty \) in the truncated generating series identity \autoref{prop:truncgs}.  Although the following is probably well known in the literature, we include it for the sake of completeness.
	
	\begin{Def}[Tails of multiple zeta values]
		Let \( \eps_1,\ldots,\eps_m \in \C \), with \( \abs{\eps_i} = 1 \), and \( n_1,\ldots,n_m \in \Z_{>0} \) be given.  Let \( M \in \mathbb{Z}_{>0} \),  we then define the \emph{\( > \)-tail \( \zeta_{>M} \)} and the \emph{\( \gg \)-tail \( \zeta_{\gg M} \)} of an MZV as follows,
		\begin{align*}
		\zeta_{>M}\sgnarg{\eps_1,\ldots,\eps_m}{n_1,\ldots,n_m} &\coloneqq \sum_{\substack{k_1 < \ldots < k_m \\ k_m > M}} \frac{\eps_1^{k_1} \cdots \eps_m^{k_m}}{k_1^{n_1} \cdots k_m^{n_m}} \,, \quad 
		\zeta_{\gg M}\sgnarg{\eps_1,\ldots,\eps_m}{n_1,\ldots,n_m} \coloneqq \sum_{\substack{M < k_1 < \ldots < k_m}} \frac{\eps_1^{k_1} \cdots \eps_m^{k_m}}{k_1^{n_1} \cdots k_m^{n_m}} \,.
		\end{align*}
	\end{Def}
	Note that (in the convergent case)
	\[
	\zeta\sgnarg{\eps_1,\ldots,\eps_m}{n_1,\ldots,n_m} - \zeta_M\sgnarg{\eps_1,\ldots,\eps_m}{n_1,\ldots,n_m} = \zeta_{>M}\sgnarg{\eps_1,\ldots,\eps_m}{n_1,\ldots,n_m} \,,
	\]
	so \( \zeta_{>M} \) represents the usual tail of the series.  The behaviour of \( \zeta_{>M} \) as \( M \to \infty \) tells us the rate of convergence of \( \zeta_M \) to \( \zeta \).
	
	\begin{Prop}\label{prop:zetatail}
		Let \( \eps_1,\ldots,\eps_m \in \C \), with \( \abs{\eps_i} = 1 \), and let \( n_1,\ldots,n_m \in \Z_{>0} \) with \( (\eps_m, n_m) \neq (1,1) \) be given.  Then there exist \( J, J' \geq 0 \in \R \) such that the following asymptotics hold as \( M \to \infty \),
		\begin{align*}
		\zeta_{>M}\sgnarg{\eps_1,\ldots,\eps_m}{n_1,\ldots,n_m} &= O\bigg( \frac{\log^J M}{M} \bigg) \,, \quad 
		\zeta_{\gg M}\sgnarg{\eps_1,\ldots,\eps_m}{n_1,\ldots,n_m} = O\bigg( \frac{\log^{J'} M}{M} \bigg)
		\end{align*}
		
		\begin{proof}
			We treat first the depth 1 cases \(	\zeta_{>M}\sgnarg{\eps}{n} = \zeta_{\gg M}\sgnarg{\eps}{n} \), for any \( (\eps,n) \neq (1,1) \).
			
			\paragraph{{\em Case \( (\eps, 1) \), with \( \eps \neq 1 \)}:}  We have that
			\begin{align*}
			\sum_{k=M+1}^\infty \frac{\eps^k}{k} &= \eps^{-1} \int_0^1 \sum_{k=M+1}^\infty (\eps t)^{k-1} \dd t = \eps^{-1} \int_{t=0}^1 \frac{(\eps t)^{M}}{1 - \eps t} \dd t
			\end{align*}
			Since \( \eps \neq 1 \), the denominator \( 1 - (\eps t) \) never attains the value 0 on the interval \( [0,1] \).  Let \( C = \big( \min\{\abs{1 - \eps t} \mid t \in [0,1] \} \big)^{-1} > 0 \), then we estimate
			\begin{align*}
			\abs{\zeta_{>M}\sgnarg{\eps}{1}} & 
			\leq \int_{t=0}^1 \abs{\frac{(\eps t)^M}{1 - (\eps t)}} \dd t 
			= C \int_0^1 t^M \dd t 
			= \frac{C}{M+1} \,.
			\end{align*}
			
			\begin{Rem}  We can estimate \( C^{-1} \) as 
				\[
				C^{-1} = \begin{cases}
				\abs{\sin(\arg\eps)} & \arg\eps \in (0, \pi/2) \cup (3\pi/2, 2\pi) \\
				1 & \arg\eps \in [\pi/2, 3\pi/2] \,.
				\end{cases}
				\]
				This follows by computing the turning point, where \( \eps = \exp(i \theta) \), of
				\[
				\abs{1 - \eps t} = \sqrt{(1 - \cos(\theta) t)^2 + \sin(\theta) t} \,,
				\]
				which occurs at \( t = \cos(\theta) \).  For \( \theta = \arg\eps \) in the range \( [\pi/2, 3\pi/2] \), cosine is negative, so this turning point does not occur in the interval \( t \in [0,1] \).  One sees then that the minimal value occurs at the left end point \( t = 0 \), which gives value \( C^{-1} = 1 \).  Otherwise, \( C^{-1} \) is given by \( \abs{1 - \eps \cos(\theta)} = \abs{\sin(\theta)} \). \end{Rem}
			
			\medskip
			\paragraph{{\em Case \( (\eps, n) \), with \( n \geq 2 \)}:}  In this case, we know the series \( \zeta\sgnargsm{\eps}{n} \) converges absolutely, so we find that \( n-1\geq1\), and
			\[
			\abs{\sum_{k>M} \frac{\eps^k}{k^n} } \leq \sum_{k=M+1}^\infty \frac{1}{k^n} \leq \int_{t=M}^\infty \frac{\dd {t}}{t^n}  = \frac{-t^{-n+1}}{n-1} \bigg\lvert_{t=M}^\infty = \frac{1}{n-1} \frac{1}{M^{n-1}} = O(M^{-1}) \,.
			\]
			This establishes the depth 1 result for \( \zeta_{>M}\sgnargsm{\eps}{n} = \zeta_{\gg M}\sgnargsm{\eps}{n} \), whenever \( (\eps, n) \neq (1,1) \).
			
			\medskip
			\noindent Now we inductively show that same result for higher depth.\medskip
			
			\paragraph{{\em Case higher depth, \( \zeta_{\gg M} \)}:}	Firstly consider the \( \zeta_{\gg M} \) case, wherein
			\begin{align*}
			\zeta_{\gg M}\sgnarg{\eps_1, \ldots, \eps_m}{n_1, \ldots, n_m} &= \sum_{\substack{M < k_1 < \ldots < k_m}} \frac{\eps_1^{k_1} \cdots \eps_m^{k_m}}{k_1^{n_1} \cdots k_m^{n_m}}  = \sum_{\substack{k_1 = M + 1}}^\infty \frac{\eps_1^{k_1}}{k_1^{n_1}} \zeta_{\gg k_1}\sgnarg{\eps_2,\ldots\,\eps_m}{n_2,\ldots,n_m}
			\end{align*}
			So we estimate
			\[
			\abs{\zeta_{\gg M}\sgnarg{\eps_1, \ldots, \eps_m}{n_1, \ldots, n_m}} \leq \sum_{\substack{k_1 = M+1}}^\infty \frac{1}{k_1^{n_1}} \abs{\zeta_{\gg k_1}\sgnarg{\eps_2,\ldots\,\eps_m}{n_2,\ldots,n_m}} \,,
			\]
			and can apply the induction assumption to obtain that for some \( J \), and \( C' \), this is 
			\[
			\leq C' \sum_{k_1 = M+1}^\infty \frac{1}{k_1^{n_1}} \frac{\log^J k_1}{k_1} \,.
			\]
			With the integral test, we further estimate that this is 
			\[
			\leq C' \int_{M}^\infty \frac{\log^J x}{x^{k+1}} \dd x \,.
			\]
			By partial integration, one sees that
			\[
			\int \frac{\log(x)^J}{x^{k+1}} \dd x = \frac{P_J(\log(x))}{x^k} \,,
			\]
			for some polynomial \( P_J \) of degree \( J \).  So the estimate on \( \zeta_{\gg M} \) is
			\[
			\leq C' \frac{P_J(\log(M))}{M^k} = O\bigg( \frac{\log^J M}{M} \bigg) \,,
			\]
			proving the \( \zeta_{\gg M} \) case.
			\medskip
			\paragraph{{\em Case higher depth, \( \zeta_{>M} \)}:} Now consider the \( \zeta_{>M} \) case, wherein
			\[
			\zeta_{>M}\sgnarg{\eps_1, \ldots, \eps_m}{n_1, \ldots, n_m} = \sum_{\substack{k_1 < \ldots < k_m \\ k_m > M}} \frac{\eps_1^{k_1} \cdots \eps_m^{k_m}}{k_1^{n_1} \cdots k_m^{n_m}} \,.
			\]
			By inserting the condition \( k_i \leq M < k_{i+1} \) in all possible compatible ways throughout the summation index, we see
			\begin{align*}
			\zeta_{>M}\sgnarg{\eps_1, \ldots, \eps_m}{n_1, \ldots,  n_m} &= \sum_{i=1}^m \sum_{\substack{0 < k_1 < \ldots k_{i-1} \leq M \\
					< k_i < \ldots < k_m}} \frac{\eps_1^{k_1} \cdots \eps_m^{k_m}}{k_1^{n_1} \cdots k_m^{n_m}} \\
			&= \sum_{i=1}^m \zeta_M\sgnarg{\eps_1,\ldots,\eps_{i-1}}{n_1,\ldots,n_{i-1}} \zeta_{\gg M}\sgnarg{\eps_{i}, \ldots,\eps_m}{n_i,\ldots,n_m} \\
			&= \sum_{i=1}^m \bigg( Q_i(\log(M)) + O\bigg( \frac{\log^{J_i} M}{M} \bigg) \bigg) \cdot O\bigg( \frac{\log^{{J_i}'} M}{M} \bigg)
			\end{align*}
			for some polynomials \( Q_i \), using standard results about the growth of truncated MZVs, and the induction assumption in the case \( \zeta_{\gg M} \).  One then sees this sum has order
			\[
			= O\bigg( \frac{\log^{J''}{M}}{M} \bigg)
			\]
			for some \( J'' \), as claimed. This completes the proof of \( \zeta_{>M} \) case, and the proposition.
		\end{proof}
	\end{Prop}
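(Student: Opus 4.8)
The plan is to induct on the depth $m$, establishing the two estimates for $\zeta_{>M}$ and $\zeta_{\gg M}$ together. In depth $1$ the two tails coincide, so I would first treat the base case $\zeta_{>M}\sgnargsm{\eps}{n}$. When $n \geq 2$ the series converges absolutely, and comparison with $\sum_{k>M} k^{-n}$ via the integral test gives a bound of order $M^{-(n-1)} = O(1/M)$ with no logarithmic factor. The delicate case is $(\eps, n) = (\eps, 1)$ with $\eps \neq 1$, where only conditional convergence is available. Here I would pass to an integral representation such as
\[
\sum_{k > M} \frac{\eps^k}{k} = \int_0^1 \frac{(\eps t)^{M+1}}{t\,(1 - \eps t)}\,\dd t \,,
\]
and observe that, since $\eps \neq 1$, the factor $1 - \eps t$ is bounded away from $0$ on $[0,1]$; hence the modulus is at most $C\int_0^1 t^M\,\dd t = C/(M+1) = O(1/M)$. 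Equivalently, Abel summation works, using that the partial sums $\sum_{k \leq N}\eps^k$ remain bounded when $\eps \neq 1$.

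For the inductive step I would dispatch $\zeta_{\gg M}$ first, as its recursive structure is cleanest. Peeling off the innermost index yields
\[
\zeta_{\gg M}\sgnarg{\eps_1,\ldots,\eps_m}{n_1,\ldots,n_m} = \sum_{k_1 > M} \frac{\eps_1^{k_1}}{k_1^{n_1}}\, \zeta_{\gg k_1}\sgnarg{\eps_2,\ldots,\eps_m}{n_2,\ldots,n_m} \,,
\]
and the inductive hypothesis bounds the inner tail by $C'\log^J k_1 / k_1$. This reduces the problem to estimating $\sum_{k > M} \log^J k \cdot k^{-(n_1+1)}$, which by the integral test together with repeated integration by parts equals $P_J(\log M)\,M^{-n_1}$ for a polynomial $P_J$ of degree $J$, hence is $O(\log^J M / M)$.

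Finally, for $\zeta_{>M}$ I would split the summation according to the unique index $i$ at which the summation crosses the threshold, i.e.\ $k_{i-1} \leq M < k_i$. This writes the tail as a finite sum of products
\[
\zeta_{>M}\sgnarg{\eps_1,\ldots,\eps_m}{n_1,\ldots,n_m} = \sum_{i=1}^m \zeta_M\sgnarg{\eps_1,\ldots,\eps_{i-1}}{n_1,\ldots,n_{i-1}}\, \zeta_{\gg M}\sgnarg{\eps_i,\ldots,\eps_m}{n_i,\ldots,n_m} \,.
\]
Each truncated factor grows at most like $Q_i(\log M) + O(\log^{J_i} M / M)$ by the standard polylogarithmic growth estimates for truncated MZVs, while each $\zeta_{\gg M}$ factor is $O(\log^{J_i'} M / M)$ by the case just proven; multiplying and summing over $i$ gives $O(\log^{J''} M / M)$ for a suitable exponent $J''$.

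The step I expect to be the main obstacle is the conditionally convergent depth-$1$ case $(\eps, 1)$. Because the absolute series $\sum k^{-1}$ diverges, any useful bound must capture genuine cancellation, and this is exactly where the hypothesis $\eps \neq 1$ is indispensable: it keeps the geometric partial sums (equivalently, the integral denominator $1 - \eps t$) under control. The remainder is bookkeeping, the only mild subtlety being to track how the exponent of $\log M$ accumulates through the induction; since each step increases it by a bounded amount, the existence of finite $J, J'$ is never in doubt.
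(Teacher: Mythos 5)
Your proposal is correct and follows essentially the same route as the paper's proof: the same integral-representation bound for the conditionally convergent depth-one case $(\eps,1)$ with $\eps\neq1$, the same recursion $\zeta_{\gg M}=\sum_{k_1>M}\eps_1^{k_1}k_1^{-n_1}\zeta_{\gg k_1}(\cdots)$ handled by the integral test and integration by parts, and the same decomposition of $\zeta_{>M}$ into $\sum_i \zeta_M\cdot\zeta_{\gg M}$ according to where the threshold falls. The only cosmetic difference is your slightly different (but equivalent) form of the integral representation and the mention of Abel summation as an alternative.
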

	
	We need the following result about the convergence of infinite sums of a particular form, in order to pass from the generating series identity involving truncated MZV's and M$t$V's to a generating series identity which holds in the limit.  It holds when \( (f_k)_{k=0}^\infty \) and \( (g_k)_{k=0}^\infty \) are the series of truncated MZV's and M$t$V's, by the previous result \autoref{prop:zetatail}, and is used to deduce \autoref{thm:symgsrestricted} from \autoref{prop:truncgs}.
	
	\begin{Prop}\label{prop:specialsumconvergence}
		Let \( (f_k)_{k=0}^\infty \) and \( (g_k)_{k=0}^\infty \) be convergent sequences with limits \( F \) and \( G \), respectively.  Assume that the tails of \( F \) and \( G \) satisfy the following convergence rate condition
		\begin{align*}
		F - f_M = O\bigg( \frac{\log^J M}{M}\bigg) \,, \quad\quad	G - g_M = O\bigg( \frac{\log^{J'} M}{M}\bigg) \,,
		\end{align*}
		for some \( J, J' \geq 0 \).  Furthermore, assume that
		\(
		\sum_{k=1}^\infty s_k = S
		\)
		is a convergent series, with \( s_k = O(k^{-\eps}) \), for some \( \eps > 0 \).  (That is, the sequence is perhaps only conditionally convergent.)
		Then
		\[
		\lim_{M\to\infty} \sum_{k=1}^M f_{M+k} g_{M-k} s_k = F G S \,.
		\]
		
		\begin{proof}
			Write
			\begin{align}
			\sum_{k=1}^M f_{M+k} g_{M-k} s_k & = \sum_{k=1}^M (F - (F - f_{M+k})) (G - (G - G_{M-k})) s_k \nonumber \\
			& \begin{aligned}[m]
			= & \sum_{k=1}^M F G s_k - \sum_{k=1}^M (F - f_{M+k}) G s_k \\
			&- \sum_{k=1}^M F (G - G_{M-k}) s_k+ \sum_{k=1}^M (F - f_{M+k})(G - G_{M-k}) s_k .	\end{aligned} \label{sum:decomp}
			\end{align}
			
			The big-$O$ condition implies that (except for \( M = 1 \)), we can find the following \emph{absolute} bound on \( F - f_M \), and likewise \( G - g_M \): there exists \( C \) such that for all \( M > 1 \),
			\[
			\abs{F - f_M} \leq C \frac{\log^J M}{M} \,.
			\]
			This follows since there exists \( k, n_0 \) such that for all \( M \geq n_0 \), \( 	\abs{F - f_M} \leq k \frac{\log^J M}{M} \), as per the definition of big-$O$.  Then since \( \frac{\log^J M}{M} > 0 \) for \( M > 1 \), one can choose larger \( k \) to ensure the first terms \( M = 2, \ldots, n_0 \) terms also satisfy this inequality.  We now analyse each summand of \autoref{sum:decomp} in turn.  
			\medskip
			\paragraph{\em First summand:} The first summand gives
			\[
			\sum_{k=1}^M F G s_k \to F G \sum_{k=1}^\infty s_k = F G S \,,
			\]
			which is the main contribution to our claimed result.
			Now we show all other summands of \autoref{sum:decomp} tend to 0 as \( M \to \infty \).  
			\medskip
			\paragraph{\em Second summand:} The second summand is
			\begin{align*}
			\abs{\sum_{k=1}^M (F - f_{M+k}) G s_k} &\leq G \sum_{k=1}^M C \frac{\log^J(M+k)}{M+k} \cdot \frac{C'}{k^\eps} \\
			& \leq G C C' \sum_{k=1}^M \frac{\log^J(2M)}{M} \frac{1}{k^\eps} \\
			& \leq G C C' \frac{\log^J(2M)}{M} \sum_{k=1}^M \frac{1}{k^\eps}
			\end{align*}
			By considering Riemann sums for the integral $\int_0^M k^{-\eps}dk$ (assuming wlog that \( 0 < \eps < 1 \)),
			\[
			\sum_{k=1}^M \frac{1}{k^\eps} \leq  \frac{M^{1-\eps}}{1-\eps} ,
			\]
			hence
			\[
			\abs{\sum_{k=1}^M (F - f_{M+k}) G s_k} \leq G C C' \frac{\log^J(2M)}{M} \cdot \frac{M^{1-\eps}}{1-\eps} \to 0 
			\]
			as \( M \to \infty \).
			\medskip
			\paragraph{\em Third summand:} Writing the third summand in \autoref{sum:decomp} as
			\[
			\sum_{k=1}^M F (G - G_{M-k}) s_k = F \sum_{k=1}^{M-1} (G - G_{M-k}) s_k + s_M F (G - G_0) \,
			\]
			we need only consider the behaviour of the sum \( \sum_{k=1}^{M-1} \) since \( s_M \to 0 \) as \( M \to \infty \). We have
			\[
			\abs{\sum_{k=1}^{M-1} (G - G_{M-k}) s_k} \leq \sum_{k=1}^{M-1} C'' \frac{\log^{J'}(M-k)}{M-k} \cdot \frac{C'}{k^\eps}
			\]
			Now (ignoring the constant $C'C''$) decompose the sum on the right-hand side into terms with \( k \leq \lfloor M/2 \rfloor \) and those with \( k > \lfloor M/2 \rfloor \), namely
			\begin{equation}\label{split}
			\sum_{k=1}^{M-1}  \frac{\log^{J'}(M-k)}{(M-k) k^\eps} = \sum_{k=1}^{\lfloor M/2 \rfloor}  \frac{\log^{J'}(M-k)}{(M-k) k^\eps}  + \sum_{k=\lfloor M/2 \rfloor + 1}^{M-1} \frac{\log^{J'}(M-k)}{(M-k) k^\eps} .
			\end{equation}
			In the first sum, we can make the estimate \[
			\frac{\log^{J'}(M-k)}{M-k} \leq \frac{\log^{J'}{M}}{\lfloor M/2 \rfloor}
			\]
			to obtain the upper bound
			\[
			\sum_{k=1}^{\lfloor M/2 \rfloor}  \frac{\log^{J'}(M-k)}{(M-k) k^\eps}	\leq \frac{\log^{J'}{M}}{\lfloor M/2 \rfloor} \sum_{k=1}^{\lfloor M/2 \rfloor}  \frac{1}{k^\eps} \leq \frac{\log^{J'}{M}}{\lfloor M/2 \rfloor}\cdot \frac{\lfloor M/2 \rfloor ^{1-\eps}}{1-\eps}
			\]
			as before, which we see goes to 0 as \( M \to \infty \).  In the second sum, we reverse the summation via \( k' = M - k \), to obtain
			\[
			\sum_{k=\lfloor M/2 \rfloor + 1}^{M-1} \frac{\log^{J'}(M-k)}{(M-k) k^\eps} = \sum_{k'=1}^{\lfloor (M-1)/2 \rfloor} \frac{\log^{J'}{k'}}{k' \, (M-k')^\eps} .
			\]
			With the bound \( k' \leq \lfloor (M-1)/2 \rfloor \leq \lfloor M/2 \rfloor \leq M/2 < M \), we can bound the sum above by
			\[
			\frac{\log^{J'}{\lfloor M/2 \rfloor}}{{\lfloor M/2 \rfloor}^\eps} \sum_{k'=1}^{\lfloor (M-1)/2 \rfloor} \frac{1}{k'} \leq \frac{\log^{J'} \lfloor M/2 \rfloor}{{\lfloor M/2 \rfloor}^\eps} \bigg(1 + \log\bigg(\frac{M}{2}\bigg)\!\bigg) \,,
			\]
			and see it also goes to 0 as \( M \to \infty \).  So by \autoref{split} the third summand in \autoref{sum:decomp} goes to 0 as \( M \to \infty \).
			\medskip
			\paragraph{\em Fourth summand:} Finally, we bound the fourth summand in \autoref{sum:decomp} as follows:
			\begin{align*}
			&\abs{\sum_{k=1}^M (F - f_{M+k})(G - G_{M-k}) s_k} \\
			& \leq C C' C'' \sum_{k=1}^{M-1} \frac{\log^J(M+k)}{M+k} \frac{\log^{J'}(M-k)}{M-k} \frac{1}{k^\eps} + (F - f_M)(G - G_0) s_k  \\
			& \leq C C' C'' \frac{\log(2M)^J}{M} \sum_{k=1}^{M-1} \frac{\log^{J'}(M-k)}{M-k} \frac{1}{k^\eps} + (F - f_M)(G - G_0) s_k ,
			\end{align*}
			but the argument above for the third summand, together with the facts \( F - f_M \to 0 \) and \( s_k \to 0 \) as \( M \to \infty \), shows that the fourth summand in \autoref{sum:decomp} also goes to 0. \medskip
			
			\paragraph{\em Conclusion:} The first summand in \autoref{sum:decomp} is the only one which survives, so we have proved the proposition.
		\end{proof}
	\end{Prop}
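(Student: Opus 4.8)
The plan is to isolate the main contribution by writing $f_{M+k}=F-(F-f_{M+k})$ and $g_{M-k}=G-(G-g_{M-k})$ and expanding the product, so that
\begin{align*}
\sum_{k=1}^M f_{M+k}\,g_{M-k}\,s_k
&= \sum_{k=1}^M FG\,s_k \;-\; G\sum_{k=1}^M (F-f_{M+k})\,s_k \\
&\quad -\; F\sum_{k=1}^M (G-g_{M-k})\,s_k \;+\; \sum_{k=1}^M (F-f_{M+k})(G-g_{M-k})\,s_k.
\end{align*}
The first summand is $FG\sum_{k=1}^M s_k$, which tends to $FG\,S$ by hypothesis; this is exactly the claimed limit. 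It therefore suffices to show that the remaining three summands each vanish as $M\to\infty$.

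Before estimating, I would upgrade the asymptotic hypotheses to bounds valid for \emph{all} indices: from $F-f_M=O(\log^J M/M)$ one obtains a constant $C$ with $\abs{F-f_M}\le C\log^J M/M$ for every $M>1$ (enlarge the big-$O$ constant to absorb the finitely many small indices), and likewise $\abs{G-g_M}\le C''\log^{J'}M/M$ and $\abs{s_k}\le C'/k^\eps$. Assuming without loss of generality that $0<\eps<1$, comparison with $\int_0^N x^{-\eps}\,\dd x$ gives $\sum_{k=1}^N k^{-\eps}\le N^{1-\eps}/(1-\eps)$, and comparison with $\int \log^{J'}(x)/x\,\dd x$ gives $\sum_{k=1}^N \log^{J'}(k)/k=O(\log^{J'+1}N)$. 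These are the only two elementary estimates the argument needs.

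For the second summand the index satisfies $M\le M+k\le 2M$, so the tail is \emph{uniformly} small: $\abs{F-f_{M+k}}\le C\log^J(2M)/M$. Pulling this factor out and applying the bound on $\sum_{k=1}^M k^{-\eps}$ yields an estimate of order $\log^J(2M)\cdot M^{-\eps}\to 0$. The fourth summand carries the same uniformly small factor $F-f_{M+k}$, so once the third summand is under control it follows by the identical pull-out argument, the leftover sum being bounded exactly as in the third summand below.

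The third summand is the crux, since the index $M-k$ descends to $0$ and the bound $\log^{J'}(M-k)/(M-k)$ degenerates there. I would treat the term $k=M$ (involving $g_0$) separately, as it carries the factor $s_M\to 0$, and split the remaining range at $k=\lfloor M/2\rfloor$. On the lower range $k\le\lfloor M/2\rfloor$ one has $M-k\ge M/2$, so $\log^{J'}(M-k)/(M-k)\le 2\log^{J'}M/M$ is again uniformly small and the estimate proceeds as for the second summand. On the upper range I would reverse the index via $k'=M-k$, rewriting the sum as $\sum_{k'}\log^{J'}(k')/(k'\,(M-k')^\eps)$; here $M-k'>M/2$ bounds the power uniformly by $(M/2)^{-\eps}$, while $\sum_{k'}\log^{J'}(k')/k'=O(\log^{J'+1}M)$, so this piece is $O(\log^{J'+1}M\cdot M^{-\eps})\to 0$. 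The main obstacle throughout is exactly this degeneration of the $g$-tail near $M-k=0$; the splitting at $M/2$ together with the index reversal is what keeps both the tail factor and the $\eps$-power simultaneously controlled, and everything else reduces to the two integral-comparison estimates noted above.
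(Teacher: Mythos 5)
Your proposal is correct and follows essentially the same route as the paper: the identical four-term decomposition, the same upgrade of the big-$O$ hypotheses to absolute bounds, the uniform pull-out of the $F$-tail over $M\le M+k\le 2M$, and the same treatment of the third summand by isolating the $k=M$ term, splitting at $\lfloor M/2\rfloor$, and reversing the index on the upper range. The only cosmetic difference is that on the reversed range you bound $\sum_{k'}\log^{J'}(k')/k'$ directly by $O(\log^{J'+1}M)$ where the paper first extracts $\log^{J'}\lfloor M/2\rfloor$ and then bounds the harmonic sum; these are equivalent.
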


\end{document}